   \definecolor{MyFillColor}{rgb}{0.9,0.9,0.9} 
   \definecolor{MyLinesColor}{rgb}{0.9,0.9,0.9} 
\tikzset{
	hatch distance/.store in=\hatchdistance,
	hatch distance=10pt,
	hatch thickness/.store in=\hatchthickness,
	hatch thickness=2pt
}
\pgfqpoint{\hatchdistance}{\hatchdistance}}
\pgfqpoint{\hatchdistance}{\hatchdistance}}
   \setlist{nosep}
\newcommand*{\R}{\mathbb R} 
\newcommand*{\N}{\mathbb N} 
\newcommand*{\Sc}{\mathcal{S}} 
\newcommand*{\cHPV}{c} 
\newcommand*{\critmax}{\alpha} 
\newcommand*{\critmin}{\beta} 
\DeclareMathOperator{\conv}{conv}
\DeclareMathOperator{\id}{id}
\DeclareMathOperator{\codim}{codim}
\DeclareMathOperator{\rank}{rank}
\newtheorem{theorem}{Theorem}[section]
\newtheorem{corollary}[theorem]{Corollary}
\newtheorem{lemma}[theorem]{Lemma}
\newtheorem{proposition}[theorem]{Proposition}
\newtheorem{conjecture}[theorem]{Conjecture}
\newtheorem{thmalpha}{Theorem}
\theoremstyle{definition}
\newtheorem{remark}[theorem]{Remark}
\begin{document}

\title{\bf Approximation, Gelfand, and Kolmogorov numbers of Schatten class embeddings}

\author{Joscha Prochno \and Micha{\l} Strzelecki}

\date{March 23, 2021}

\maketitle

\begin{abstract}
Let $0<p,q\leq \infty$ and denote by $\Sc_p^N$ and $\Sc_q^N$ the corresponding Schatten classes of real $N\times N$ matrices. We study approximation quantities of natural identities $\Sc_p^N\hookrightarrow \Sc_q^N$ between Schatten classes and prove asymptotically sharp bounds up to constants only depending on $p$ and $q$, showing how approximation numbers are intimately related to the Gelfand numbers and their duals, the Kolmogorov numbers. In particular, we obtain new bounds for those sequences of $s$-numbers. Our results improve and complement bounds previously obtained by B. Carl and A. Defant [J. Approx. Theory, 88(2):228–256, 1997], Y. Gordon, H. K\"onig, and C. Sch\"utt [J. Approx. Theory, 49(3):219–239, 1987], A. Hinrichs and C. Michels [Rend. Circ. Mat. Palermo (2) Suppl., (76):395–411, 2005], and A. Hinrichs, J. Prochno, and J. Vyb\'iral [preprint, 2020]. We also treat the case of quasi-Schatten norms, which is relevant in applications such as low-rank matrix recovery.
\medspace
\vskip 1mm
\noindent{\bf Keywords}. {Approximation numbers, Gelfand numbers, Kolmogorov numbers, natural embeddings, operator ideals, Schatten classes, s-numbers, 2-summing norms.}\\
{\bf MSC}. Primary 47B10, 47B06; Secondary 46B20, 46B06, 46B07, 46B28, 68Q25
\end{abstract}

\tableofcontents

\section{Introduction and main results}

The family of all compact operators between Hilbert spaces (i.e., those operators for which the image under the operator of any bounded subset of the domain is a relatively compact subset of the codomain) with their sequence of singular values belonging to the space $\ell_p$ is known as the Schatten class $\Sc_p$ ($0<p\leq \infty$). Important subclasses are the trace class operators for $p=1$ and the Hilbert--Schmidt operators for $p=2$. The collection of Schatten classes was introduced by R. Schatten in \cite[Chapter 6]{Schatten1960}, who worked in the more general setting of symmetric gauge functions and the unitarily invariant crossnorms on the subalgebra of finite rank operators generated by them. The very origin of his work can be traced back to his paper \cite{S1946} and the subsequent works \cite{SvN1946_II,SvN1946_III} with von Neumann, studying nuclear operators on Hilbert spaces; on Banach spaces this had later been considered by Ruston \cite{R1951} and on locally convex spaces by Grothendieck \cite{G233}. Before Schatten's monograph appeared, spaces of compact operators (back then referred to as completely continuous operators) had
received comparably little attention in the literature, but today they form a classical and still very active part of modern functional analysis. In fact, Schatten classes provide the mathematical framework and foundation for topics such as low-rank matrix recovery and completion (see, e.g., \cite{CR2009, CDK2015, FR2013, KS2018, RT2011} and references cited therein) and are fundamental in quantum information theory, for instance in connection to counterexamples to Hasting's additivity conjecture (see, e.g., \cite{AS2017, ASW2010, ASW2011}). This explains the increased interest in their structure in recent years. The Schatten class $\Sc_p$ is commonly referred to as a non-commutative $\ell_p$ space, because both the space of operators and the sequence space share various structural characteristics, for instance, they are lexicographically ordered, uniformly convex for $1<p<\infty$, and satisfy a trace duality relation together with a corresponding H\"older inequality. Also, while the dual space of $c_0$ is the space $\ell_1$, the dual space of the space of compact operators on a Hilbert space is the Schatten $1$-class. However, despite several similarities on different levels, there are many differences in their analytic, geometric, and probabilistic behavior, and often arguments in the non-commutative setting are more subtle and delicate.

Today, there is a vast literature in geometric functional analysis examining both the local and global structure of Schatten classes. Among the now famous and classical works is a paper of Gordon and Lewis, showing that for $p\neq 2$ the class $\Sc_p$ does not have local unconditional structure and consequently does not possess an unconditional basis \cite{GL1974}. Before, Kwapie\'n and Pe{\l}czy\'nski had shown that $\Sc_1$ as well as $\Sc_\infty$ are not isomorphic to subspaces with an unconditional basis \cite{KP1970}. Another breakthrough result is due to Tomczak-Jaegermann, who succeeded in \cite{TJ1974} to prove that $\Sc_1$ has Rademacher cotype $2$. The past $20$ years or so have seen more work on the finite-dimensional front. For instance, Carl and Defant \cite{CD1997} proved a Garnaev--Gluskin result for Gelfand numbers of Schatten class embeddings $\Sc_p^N\hookrightarrow \Sc_2^N$ for $1\leq p\leq 2$, K\"onig, Meyer, and Pajor \cite{KMP1998} obtained that the isotropic constants of $\Sc_p^N$ unit balls are bounded above by absolute constants for all $1\leq p\leq \infty$, and Gu\'edon and Paouris studied their concentration of mass properties in \cite{GP2007}. Even more recently, Radke and Vritsiou succeeded in confirming the thin-shell conjecture for $\Sc_\infty^N$ \cite{RV2016}, Hinrichs, Prochno, and Vyb\'iral computed the entropy numbers for identity mappings $\Sc_p^N\hookrightarrow\Sc_q^N$ for all $0<p,q\leq \infty$ \cite{HPV2017} and recently obtained asymptotically sharp estimates for Gelfand numbers in almost all regimes \cite{HPV2020}, Vritsiou confirmed the variance conjecture for $\Sc_\infty^N$ \cite{V2018}, and, in a series of papers, Kabluchko, Prochno, and Th\"ale computed the exact asymptotic volume and volume ratio of $\Sc_p^N$ unit balls for $0<p\leq \infty$ \cite{KPT2020_volume_ratio}, studied the threshold behavior of the volume of intersections of unit balls \cite{KPT2020_intersections}, and obtained large deviation principles for the empirical spectral measures of random matrices in Schatten unit balls \cite{KPT2019_sanov}.

\subsection{Approximation, Gelfand, and Kolmogorov numbers for Schatten class embeddings} \label{sec:agk numbers for schatten}

A possible way to quantify the degree of compactness of an operator is via its sequence of $s$-numbers, which includes the approximation and entropy numbers as well as Gelfand and Kolmogorov numbers. The focus of this paper will be on approximation numbers for Schatten class embeddings, which, as we will show, are intimately related to their sequence of Gelfand and Kolmogorov numbers. In fact, we shall show that, depending on the relation between $p$ and $q$, the approximation numbers of the natural identities $\Sc_p^N\hookrightarrow\Sc_q^N$ either behave like the Gelfand numbers or like their duals, the Kolmogorov numbers. 

Let us continue with the definition of those $s$-numbers and of the Schatten classes before we present currently known results followed by our main findings; for unexplained notions or notation and properties as well as relations between those $s$-numbers, we refer to Section \ref{sec:prelim} below. For $0<p\le \infty$, we denote by $\Sc_p^N$ the Schatten $p$-class of real $N\times N$ matrices acting from $\ell_2^N$ to $\ell_2^N$ equipped with the Schatten $p$-(quasi-)norm
\[
\|A\|_{\Sc_p}:=\bigg(\sum_{j=1}^N\sigma_j(A)^p\bigg)^{1/p},
\]
where $\big(\sigma_j(A)\big)_{j=1}^N$ is the sequence of singular values of $A$. Given quasi-Banach spaces $X,Y$ and an operator $T\in\mathscr L(X,Y)$, we shall denote by
\[
a_n(T) := \inf\big\{ \|T-T_n\|\,:\, T_n\in\mathscr L(X,Y),\, \rank(T_n) < n \big\},\qquad n\in\N,
\]
the sequence of approximation numbers of $T$. For $n\in\N$, we define the $n$-th Gelfand number of the operator $T$ by
\[
c_n(T):= \inf\big\{ \|T|_F\| \,:\, F\subset X,\,\codim F < n \big\},
\]
where $T|_F$ denotes the restriction of the operator $T$ to the subspace $F$. Last but not least, the $n$-th Kolmogorov number of $T$ is defined as
\[
d_n(T):=\inf\big\{\|Q_E^YT\|\,:\,E\subset Y,\,\dim(E)<n\big\},
\]
where $Q_E^Y:Y\to Y/E$ denotes the quotient mapping from $Y$ onto the quotient space $Y/E$. 
Let us remark that the definition of the $n$-th Kolmogorov number can be reformulated as
\[
d_n(T)=\inf_{\substack{E\subset Y \\ \dim(E)<n}}\sup_{x\in B_X}\inf_{y\in E}\|Tx-y\|_Y.
\]

\subsubsection{Known results for approximation numbers}

In \cite{GKS1987}, Gordon, K\"onig, and Sch\"utt investigated the famous problem of self-duality of entropy numbers and proved several probabilistic results with a view towards studying $s$-numbers of linear operators between Banach spaces. In \cite[Proposition 3.7]{GKS1987} they applied their estimates to natural embeddings between Schatten classes and proved that for $1<p<2$,
\[
a_n\big(\Sc_p^N\hookrightarrow\Sc_{p^*}^N\big)
\asymp_p
\begin{cases}
1 &:\, 1\leq n \leq \lfloor N^{3-2/p}\rfloor,\\
\frac{N^{3/2-1/p}}{\sqrt{n}} &:\, \lfloor N^{3-2/p}\rfloor<n\leq \frac{N^2}{2}.
\end{cases}
\]
where $\asymp_p$ denotes equivalence up to constants only depending on $p$, $\lfloor\cdot\rfloor$ is the floor function, and $p^*$ denotes the H\"older conjugate. In the case where $n>\frac{N^2}{2}$ they obtained partial results, showing that if $\frac{N^2}{2}< n \leq N^2-c_pN^{3-2/p}\log N$ for some $p$-dependent constant $c_p\in(0,\infty)$, then
\[
C_p^{-1} N^{-1/2-1/p} \sqrt{N^2-n} \leq a_n\big(\Sc_p^N\hookrightarrow\Sc_{p^*}^N\big) \leq C_p N^{-1/2-1/p} \sqrt{N^2-n} \sqrt{\log N},
\]
for some constant $C_p\in(0,\infty)$ only depending on $p$. For the range $N^2-c_pN^{3-2/p}\log N< n < N^2$, they showed that 
\[
\max\big\{N^{1-2/p},\,C_p^{-1} N^{-1/2-1/p}\sqrt{N^2-n} \big\} \leq a_n\big(\Sc_p^N\hookrightarrow\Sc_{p^*}^N\big) \leq C_p N^{1-2/p} \log N.
\]
In \cite[Remark on p. 237]{GKS1987} the authors mention that for $n>N^2/2$ those logarithmic terms are probably not needed and, as we will show in this paper, this is indeed the case for large $n$ (see Theorem \ref{thm: main-approximation} below). In the boundary case $p=1$ (and so $p^*=\infty$), Gordon, K\"onig, and Sch\"utt determined the precise asymptotic behavior up to absolute constants, proving that
\[
a_n\big(\Sc_1^N\hookrightarrow\Sc_{\infty}^N\big) \asymp 
\begin{cases}
1 &:\, 1\leq n \leq N, \\
\sqrt{N/n} &:\, N\leq n\leq N^2/2, \\
N^{-3/2} \sqrt{N^2-n} &:\, N^2/2 \leq n \leq N^2-N,\\
N^{-1} &:\, N^2-N\leq n \leq N^2.
\end{cases}
\]
A decade later, this result was complemented by Carl and Defant in \cite{CD1997}, who showed that
\[
a_n\big(\Sc_1^N\hookrightarrow\Sc_{2}^N\big) = 
a_n\big(\Sc_2^N\hookrightarrow\Sc_{\infty}^N\big) \asymp 
\begin{cases}
1 &:\, 1\leq n \leq \lfloor N^2/2 \rfloor, \\
\sqrt{\frac{N^2-n+1}{N^2}} &:\, \lfloor N^2/2 \rfloor \leq n\leq N^2-N+1, \\
N^{-1/2} &:\,N^2-N+1\leq n\leq N^2. 
\end{cases}
\]
More generally, they obtained in \cite[Remark 2]{CD1997} that for all $1\leq n \leq N^2$ and any $2\leq q\leq \infty$,
\[
a_n\big(\Sc_2^N\hookrightarrow\Sc_{q}^N\big) \asymp_q
  \max\Bigg\{ \frac{1}{\|\Sc_q^N\hookrightarrow\Sc_{2}^N\|},\,\sqrt{\frac{N^2-n+1}{N^2}}\bigl\|\Sc_2^N\hookrightarrow\Sc_{q}^N\bigr\| \Bigg\} = \max\Bigg\{N^{1/q-1/2},\, \sqrt{\frac{N^2-n+1}{N^2}} \Bigg\}.
\] 
The other regime, with $q<p$ and where $\Sc_p^N\hookrightarrow\Sc_{q}^N$ is considered, had not been studied before Hinrichs and Michels in \cite{HM2005}.
In the case that $1\leq q\leq p\leq \infty$ and $1\leq n \leq N^2$, the authors proved \cite[Proposition 4.1, Corollary 4.8]{HM2005} that
\[
a_n\big(\Sc_p^N\hookrightarrow\Sc_{q}^N\big) \asymp \max\Bigl\{1, \frac{N^2-n+1}{N}\Bigr\}^{1/q-1/p},
\]
which for $p=2$ was independently obtained in \cite{DMM2006} via abstract interpolation methods and summing norm estimates. 

\subsubsection{Known results for Gelfand numbers}
\label{subsec:known results Gelfand}

The most recent work that complements the body of research on $s$-numbers for Schatten class embeddings, and in particular previous works of Carl and Defant \cite{CD1997}, Hinrichs and Michels \cite{HM2005}, and Ch\'avez-Dom\'inguez and Kutzarova \cite{CDK2015}, is a paper by Hinrichs, Prochno, and Vyb\'iral who computed in almost all remaining cases the Gelfand numbers for natural embeddings between Schatten classes \cite{HPV2020}. In \cite[Theorem A]{HPV2020} they present the following asymptotics:
if $0<p,q\,\le\,\infty$ and $n,N\in\N$ with $1\leq n \leq N^2$, then
\[
c_n\big(\Sc_p^N\hookrightarrow \Sc_q^N\big)  
\asymp_{p,q}
\begin{cases}
\max\left\{1,\frac{N^2 - n +1}{N} \right\} ^{1/q-1/p} & :\, \ 0<q \le p \le \infty, \\
\min\bigg\{ 1,\frac{N}{n} \bigg\}^{1/p-1/q}
& :\, \ 0<p\leq 1 \text{ and } p<q\leq 2,\\
\min\Biggl\{1,\frac{N^{3/2-1/p}}{n^{1/2}}\Biggr\}^{\frac{1/p-1/q}{1/p-1/2}}
& :\, \ 1\leq p \leq q\leq 2,\\
\min\Big\{1,\frac{N^{3/2-1/p}}{n^{1/2}}\Big\} & :\, \ 1 < p \le 2 \le q\le \infty \text{ and } 1 \leq n \leq c_{p,q} N^2,\\
1 & :\, \ 2 \le p \le q \le \infty \text{ and } 1 \leq n \leq c_{p,q} N^2,\\
N^{1/q-1/p} & :\, \ 0 < p \le q \le \infty \text{ and }  N^2 - \cHPV N^{1+2/q}+1 \leq n \leq  N^2. 
\end{cases}
\]
Here $c_{p,q} \in (0,1)$ is a constant depending on $p$ and $q$ and $\cHPV \in(0,1)$ is an absolute constant. As the authors explain, the above asymptotics cover almost all cases.
	In the Banach space setting,
only for the intermediate range $c_{p,q} N^2\le n \le N^2 - \cHPV N^{1+2/q}+1$ 
in the cases  $1\leq p < 2 < q \leq \infty$ and $2 < p  \le q \le \infty$  
there remains some gap.
In the quasi-Banach case $0<p\leq 1$ and $q\geq 2$ there remains some gap in upper and lower bounds in the ranges of small and intermediate codimensions. In those cases (see \cite[Theorem B]{HPV2020}) the authors presented the following estimates: 
\begin{enumerate}
\item If\, $0<p\leq 1$, $2\leq q\leq \infty$, and $1\leq n \leq c_{p,q}N^2$, then
\[
\min\bigg\{ 1,\frac{N}{n} \bigg\}^{1/p-1/q} \lesssim_{p,q} c_n\big(\Sc_p^N\hookrightarrow \Sc_q^N\big)  
\lesssim_{p}\min\Big\{1,\frac{N}{n} \Big\}^{1/p-1/2},
\]
which is sharp up to constants for $q=2$.
\item If\, $0<p \le 1,2  \le q \le \infty$ and $ c_{p,q} N^2 \le n \le N^2 - \cHPV N^{1+2/q}+1$, then
\[
\min\bigg\{ 1,\frac{N}{n} \bigg\}^{1/p-1/q} \lesssim_{p,q} c_n\big(\Sc_p^N\hookrightarrow \Sc_q^N\big)  
\lesssim_{q} N^{-1/p-1/2} (N^2 - n +1)^{1/2},
\] 
where the upper bound remains valid as long as $0<p \le 2  \le q \le \infty$.

\item If\, $2 \le p  \le q \le \infty$ and $N^2 - c_q^{-2} \, N^{1+2/p} +1  \le n \le N^2 - \cHPV N^{1+2/q}+1$, then 
\[
\sqrt{\frac{N^2-n+1}{N^2}}^{\,\frac{1/p-1/q}{1/2-1/q}}
 \lesssim
 c_n\big(\Sc_p^N\hookrightarrow \Sc_q^N\big) 
 \lesssim_{q} N^{1/2-1/p} \sqrt{\frac{N^2-n+1}{N^2}},
\] 
which is sharp up to constants for $p=2$. Note that when $1\leq n \leq N^2 - c_q^{-2} \, N^{1+2/p} +1$, then the previous upper bound is replaced by the trivial upper bound $1$. 
\end{enumerate}
Here $c_q,c_{p,q} \in (0,1)$ are constants depending on $p$ and/or $q$ and $c\in(0,1)$ is an absolute constant.

\subsection{Main results}
\label{sec:agk numbers for schatten main results}


Below $\cHPV \in(0,1)$ denotes the universal constant from \cite[Lemma 2.5]{HPV2020}.
We use $c(p)$, $c(p,q)$, etc.\ for positive constants which depend only on the parameters given and the value of which may change from line to line.


The first result concerns the Gelfand numbers of natural identities between Schatten classes and closes a gap in \cite{HPV2020}.
More precisely, we shall show in Proposition \ref{prop:Gelfand-missing-intermediate-regime} below that the upper bounds obtained in \cite[Propositions 4.5 and 4.8]{HPV2020} (see also 2. in Subsection \ref{subsec:known results Gelfand} above) for intermediate sized codimensions and $0 < p \leq 2 \leq q \leq \infty$ are  sharp by providing an asymptotically matching lower bound.

\begin{thmalpha}[Main result for Gelfand numbers]
	\label{thm:main-Gelfand}
	Let $0 < p \leq 2 \leq q \leq \infty$ and assume that $n,N\in \N$ with $1\leq n \leq N^2$.
	Then, whenever $  (1-\cHPV) N^2 \leq n\leq N^2 - \cHPV N^{1+2/q} +1$,
	\begin{equation*}
		N^{-1/2-1/p} \sqrt{N^2-n+1} \lesssim c_n\big(\Sc_p^N \hookrightarrow \Sc_q^N\big) 
		\lesssim_q 	N^{-1/2-1/p} \sqrt{N^2-n+1}
	\end{equation*}
	with $\cHPV\in(0,1)$ being the universal constant from \cite[Lemma 2.5]{HPV2020}. If $1\leq p \leq 2 \leq q \leq \infty$, then we can obtain the lower bound with explicit constant $1$ instead of some absolute constant.
\end{thmalpha}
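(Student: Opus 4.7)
The plan is to reduce the lower bound for $c_n(\Sc_p^N\hookrightarrow\Sc_q^N)$ to the corresponding Hilbert-space case $c_n(\Sc_2^N\hookrightarrow\Sc_q^N)$. The reduction is driven by the elementary inequality
\[
\|A\|_{\Sc_p}\ \le\ N^{1/p-1/2}\,\|A\|_{\Sc_2},
\]
valid for every $A\in\R^{N\times N}$ and every $0<p\le 2$: this is H\"older's inequality applied to the singular values of $A$ with exponents $2/p\ge 1$ and $2/(2-p)\ge 1$, so it holds with constant $1$ even in the quasi-Banach regime.

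Rearranging, for any linear subspace $F\subset\R^{N\times N}$ and any nonzero $A\in F$,
\[
\frac{\|A\|_{\Sc_q}}{\|A\|_{\Sc_p}}\ \ge\ N^{1/2-1/p}\cdot\frac{\|A\|_{\Sc_q}}{\|A\|_{\Sc_2}}.
\]
Since a linear subspace of $\R^{N\times N}$ has the same codimension regardless of which Schatten (quasi-)norm is placed on the ambient space, I would take the supremum over nonzero $A\in F$ and then the infimum over subspaces of codimension less than $n$ to obtain
\[
c_n\bigl(\Sc_p^N\hookrightarrow\Sc_q^N\bigr)\ \ge\ N^{1/2-1/p}\cdot c_n\bigl(\Sc_2^N\hookrightarrow\Sc_q^N\bigr).
\]
In the range specified in the theorem, part (3) of \cite[Theorem B]{HPV2020} specialized to $p=2$ (which is sharp up to constants in that case, and ultimately rests on \cite[Lemma 2.5]{HPV2020}) supplies $c_n(\Sc_2^N\hookrightarrow\Sc_q^N)\gtrsim N^{-1}\sqrt{N^2-n+1}$; multiplying by $N^{1/2-1/p}$ delivers exactly the desired bound $N^{-1/2-1/p}\sqrt{N^2-n+1}$.

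The proof is essentially a one-line reduction, so the main obstacle is nothing more than careful bookkeeping of constants: for the sharper statement with explicit constant $1$ when $1\le p\le 2$, one needs the H\"older step with constant $1$ (trivial) and the Hilbert-space estimate from \cite[Lemma 2.5]{HPV2020} with constant $1$ throughout the range $(1-\cHPV)N^2\le n\le N^2-\cHPV N^{1+2/q}+1$. The left endpoint $(1-\cHPV)N^2$ is chosen precisely so that the clean Hilbert-space bound takes over from the trivial estimate, and the right endpoint is the universal threshold already built into \cite[Lemma 2.5]{HPV2020}; no adaptation of constants is needed once these matching conditions are verified.
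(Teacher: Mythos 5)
Your reduction is correct, and it is genuinely a different (and cleaner) route than the paper's main proof. What you are really using is the ideal property of Gelfand numbers: from the factorization
\[
\Sc_2^N \hookrightarrow \Sc_p^N \hookrightarrow \Sc_q^N,\qquad c_n\bigl(\Sc_2^N\hookrightarrow\Sc_q^N\bigr)\le \|\Sc_2^N\hookrightarrow\Sc_p^N\|\cdot c_n\bigl(\Sc_p^N\hookrightarrow\Sc_q^N\bigr),
\]
together with $\|\Sc_2^N\hookrightarrow\Sc_p^N\|=N^{1/p-1/2}$ (Lemma~\ref{lem:norm}), one gets precisely your inequality $c_n(\Sc_p^N\hookrightarrow\Sc_q^N)\ge N^{1/2-1/p}\,c_n(\Sc_2^N\hookrightarrow\Sc_q^N)$ with constant $1$, valid in the full quasi-Banach range $0<p\le 2$ since the ideal property holds there. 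The paper's first proof of Proposition~\ref{prop:Gelfand-missing-intermediate-regime} does something more elaborate: it invokes Lemma~\ref{lem:GKS-FLM} (Gordon--K\"onig--Sch\"utt), feeds in the Figiel--Lindenstrauss--Milman Dvoretzky-dimension bound for $\Sc_q^N$ (Lemma~\ref{lem:Schatten-largest-ell-2-iso} and inequality (2.10) of \cite{FLM1977}), and extracts a distance estimate $d(E,\ell_2^{N^2-n+1})\gtrsim N^{-1/2-1/q}\sqrt{N^2-n+1}$. Your approach skips all of that by offloading the hard part onto the already-established $p=2$ case; this is closest in spirit to the paper's \emph{alternative} proof via the $2$-summing norm $\pi_2(\Sc_q^N\hookrightarrow\Sc_p^N)$, of which your argument is exactly the factorization through $\Sc_2^N$.

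The one place where your write-up is imprecise is the source of the constant-$1$ base case. You attribute the clean Hilbert-space estimate to \cite[Lemma~2.5]{HPV2020}, but that is not where a constant-$1$ lower bound for $c_n(\Sc_2^N\hookrightarrow\Sc_q^N)$ comes from (and Theorem~B(3) of \cite{HPV2020} gives only an unspecified absolute constant). The correct citation is the Carl--Defant inequality $c_n(T\colon X\to Y)\ge\sqrt{m-n+1}/\pi_2(T^{-1})$ for an isomorphism between $m$-dimensional Banach spaces \cite[Lemma, p.~231]{CD1997}, combined with the exact value $\pi_2(\Sc_q^N\hookrightarrow\Sc_2^N)=N$ for $q\ge 2$ \cite[Lemma~5.2]{DMM2006}, giving $c_n(\Sc_2^N\hookrightarrow\Sc_q^N)\ge N^{-1}\sqrt{N^2-n+1}$ with constant $1$ for all $1\le n\le N^2$ (so range matching is in fact a non-issue for the lower bound). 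Notice, moreover, that since both the reduction step and this base case have constant $1$, your argument actually yields the constant-$1$ lower bound for the full range $0<p\le 2$, which is slightly stronger than the wording of Theorem~\ref{thm:main-Gelfand} (which restricts the constant-$1$ claim to $1\le p\le 2$); the restriction in the paper comes from applying the $\pi_2$-argument directly to the quasi-Banach space $\Sc_p^N$, a step that your factorization neatly avoids.
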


\begin{remark}
For $2=p \leq q \leq \infty$, we shall also show that the lower bound of Hinrichs and Michels (see 3. above) can be obtained with explicit constant $1$ instead of an unknown absolute constant.
\end{remark}


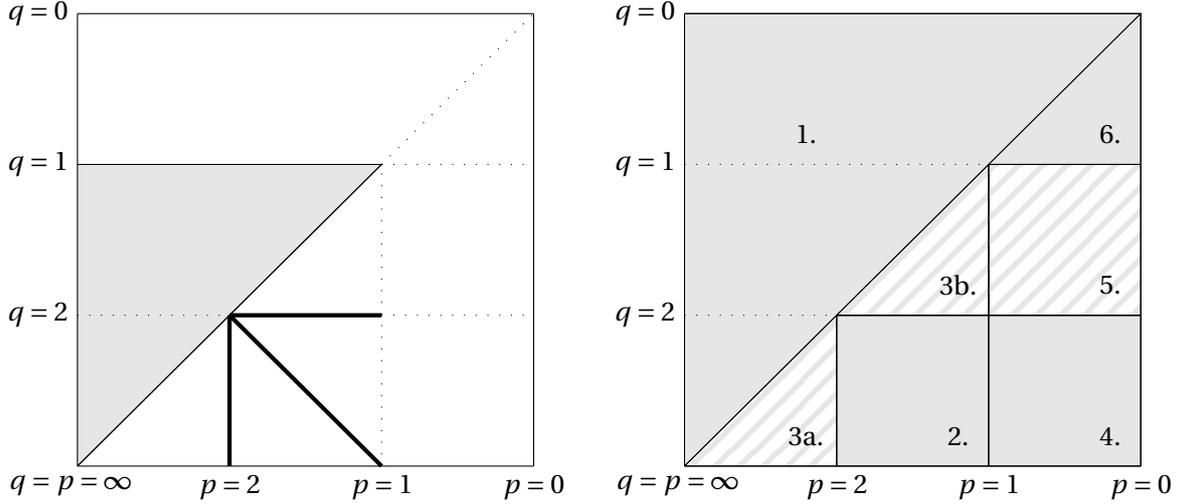
\begin{figure}[h]
\centering
		\begin{tikzpicture}[scale=2]
		\draw [fill=MyFillColor] (0,0) -- (2,2) -- (0,2);
		\draw (0,0) -- (0,3);
		\draw (0,0) -- (3,0);
		\draw (0,3) -- (3,3);
		\draw (3,0) -- (3,3);
		\draw (0,0) -- (2,2);
		\draw [loosely dotted] (2,2) -- (3,3);
		\draw [ultra thick] (1,0) -- (1,1);
		\draw [loosely dotted] (2,0) -- (2,2);
		\draw [loosely dotted] (0,1) -- (1,1);
		\draw [ultra thick] (1,1) -- (2,1);
		\draw [loosely dotted] (2,1) -- (3,1);
		\draw [loosely dotted] (2,2) -- (3,2);
		\draw [ultra thick] (1,1) -- (2,0);
		\node [below] at (-0.05,0) {$q=p=\infty$};
		\node [below] at (1,0) {$p=2$};
		\node [below] at (2,0) {$p=1$};
		\node [below] at (3,0) {$p=0$};		
		\node [left] at (0,1) {$q=2$};
		\node [left] at (0,2) {$q=1$};
		\node [left] at (0,3)  {$q=0$};
		\end{tikzpicture}
		\quad
		\begin{tikzpicture}[scale=2]
		\draw [fill=MyFillColor] (0,0) -- (3,3) -- (0,3);
		\draw [fill=MyFillColor] (1,0) rectangle (2,1);
		\draw [fill=MyFillColor] (2,0) rectangle (3,1);
		\draw [fill=MyFillColor] (2,2) -- (3,2) -- (3,3);
		\draw [pattern=MyPattern, pattern color=MyLinesColor] (0,0) -- (1,0) -- (1,1);
		\draw [pattern=MyPattern, pattern color=MyLinesColor] (1,1) -- (2,1) -- (2,2);
		\draw [pattern=MyPattern, pattern color=MyLinesColor] (2,1) rectangle (3,2);
		\draw (0,0) -- (0,3);
		\draw (0,0) -- (3,0);
		\draw (0,3) -- (3,3);
		\draw (3,0) -- (3,3);
		\draw (0,0) -- (3,3);
		\draw (1,0) -- (1,1);
		\draw (2,0) -- (2,2);
		\draw [loosely dotted] (0,1) -- (1,1);
		\draw (1,1) -- (3,1);
		\draw (2,2) -- (3,2);
		\draw [loosely dotted] (0,2) -- (2,2);
		\node [below] at (-0.05,0) {$q=p=\infty$};
		\node [below] at (1,0) {$p=2$};
		\node [below] at (2,0) {$p=1$};
		\node [below] at (3,0) {$p=0$};		
		\node [left] at (0,1) {$q=2$};
		\node [left] at (0,2) {$q=1$};
		\node [left] at (0,3)  {$q=0$};
		\node [] at (0.8,2.2) {\ref{item-appr-upper-triangle}.};
		\node [] at (1.8,0.2) {\ref{item-appr-square}.};
		\node [] at (0.8,0.2) {\ref{item-appr-two-triangles-a}.};
		\node [] at (1.8,1.2) {\ref{item-appr-two-triangles-b}.};
		\node [] at (2.8,0.2) {\ref{item-appr-quasi-low}.};	
		\node [] at (2.8,1.2) {\ref{item-appr-quasi-up}.};	
		\node [] at (2.8,2.2) {\ref{item-appr-quasi-1}.};		
		\end{tikzpicture}
	\captionsetup{singlelinecheck=off}
	\caption[foo]{Summary of the known results for approximation numbers before (left) and now  from Theorem~\ref{thm: main-approximation} (right).
		Gray regions and thick lines correspond to exact asymptotics (up to logarithms in the case of~\ref{item-appr-square} and the diagonal line $p=q^*$), lined regions to exact asymptotics for small and large $n$, but nonmatching upper and lower bounds in the intermediate range of $n$.
		More precisely:
		\begin{itemize}
			\item[] \ref{item-appr-upper-triangle} --  $a_n(\Sc_p^N\hookrightarrow \Sc_q^N) \asymp_{p,q} \max\bigl\{ 1, \frac{N^2-n+1}{N}\bigr\}^{1/q-1/p}$;
			\item[] \ref{item-appr-square} -- logarithms removed for large $n$, but remain in part of the intermediate range;	
			\item[]	\ref{item-appr-two-triangles-a}, \ref{item-appr-two-triangles-b} -- nonmatching upper and lower bounds in the intermediate range of $n$;
			\item[]	\ref{item-appr-quasi-low} -- no dependence on $p$: $a_n(\Sc_p^N\hookrightarrow \Sc_q^N) = a_n(\Sc_1^N\hookrightarrow \Sc_q^N)$, exact asymptotics known;
			\item[] \ref{item-appr-quasi-up} --  no dependence on $p$: $a_n(\Sc_p^N\hookrightarrow \Sc_q^N) = a_n(\Sc_1^N\hookrightarrow \Sc_q^N)$, nonmatching upper and lower bounds in the intermediate range of $n$ as in \ref{item-appr-two-triangles-b};
			\item[] \ref{item-appr-quasi-1} -- $a_n(\Sc_p^N\hookrightarrow \Sc_q^N) =1$.
		\end{itemize}
	}   
	\label{fig:approximation results} 
\end{figure}


We continue with our main result on the approximation numbers of natural embeddings of Schatten classes.
For $1  \leq p \leq 2 \leq q \leq \infty$ we define
\begin{equation*}
\critmax_{p,q} := \max\bigl\{ 3-2/p,1+2/q\bigr\}\qquad\text{and}\qquad
\critmin_{p,q}:= \min\bigl\{ 3-2/p,1+2/q\bigr\}.
\end{equation*}
Note that $1\leq \critmin_{p,q} \leq \critmax_{p,q} \leq 2$ and $\critmax_{p,q}=\critmin_{p,q} = 3-2/p$ whenever $1/p+1/q=1$.

In the statement below we also include  all previously known results.
The state of the art and our contribution are summarized in Figure~\ref{fig:approximation results}.


\begin{thmalpha}[Main result for approximation numbers]\label{thm: main-approximation}
Let $0<p,q\leq \infty$ and assume that $n,N\in\N$ with $1\leq n \leq N^2$.
Let $\cHPV \in(0,1)$ be the universal constant from \cite[Lemma 2.5]{HPV2020}.
Then the following estimates hold (see Figure~\ref{fig:approximation results}).
\begin{enumerate}
\item
\label{item-appr-upper-triangle}
 If $0<q\leq p \leq \infty$, then
\[
a_n\big(\Sc_p^N\hookrightarrow \Sc_q^N\big)
\asymp_{p,q} 
\max\Bigl\{ 1, \frac{N^2-n+1}{N}\Bigr\}^{1/q-1/p}.
\]
\item
\label{item-appr-square}
If $1\leq p\leq 2\leq q\leq \infty$, then
\begin{align*}
a_n\big(\Sc_p^N\hookrightarrow \Sc_q^N\big)
\asymp_{p,q} 
\begin{cases}
\min\Bigl\{ 1, \frac{N^{\critmax_{p,q}/2}}{n^{1/2}}\Bigr\} 
     &:\, 1 \leq n\leq (1-\cHPV)N^2, \\
N^{\critmax_{p,q}/2-2} \sqrt{N^2-n+1}
     &:\, N^2 - \cHPV N^{\critmax_{p,q}} +1 \leq n\leq N^2 -  \cHPV N^{\critmin_{p,q}}+1,\\
N^{1/q-1/p}
     &:\,  N^2 -\cHPV N^{\critmin_{p,q}} +1\leq n \leq N^2.
\end{cases}
\end{align*}
Moreover, for $(1-\cHPV) N^2 \leq n\leq N^2-\cHPV N^{\critmax_{p,q}} + 1$, we have the  bounds
\[
N^{\critmax_{p,q}/2-2} \sqrt{N^2-n+1}
 \lesssim_{p,q}
  a_n\big(\Sc_p^N\hookrightarrow \Sc_q^N\big)
\lesssim_{p,q}
N^{\critmax_{p,q}/2-2} \sqrt{N^2-n+1} \sqrt{\log N}, 
\]
where the logarithm on the right-hand side can be removed if $p=1$ or $q=\infty$.
\item
\label{item-appr-two-triangles}
\begin{enumerate}
	\item 
	\label{item-appr-two-triangles-a}
	If $2\leq p\leq q\leq \infty$, then 
\[
a_n\big(\Sc_p^N \hookrightarrow \Sc_q^N\big)
\lesssim_{q}
\begin{cases}
1       
           &:\,  1 \leq n \leq N^2 - c(q) N^{1+2/p} +1,\\
N^{-1/2-1/p} \sqrt{N^2-n+1}
          &:\, N^2 - c(q) N^{1+2/p} +1\leq n\leq N^2 -  \cHPV N^{1+2/q}+1,\\
N^{1/q-1/p} 
          &:\, N^2 - \cHPV N^{1+2/q} +1\leq n \leq N^2.
\end{cases}
\]
and
\[
a_n\big(\Sc_p^N \hookrightarrow \Sc_q^N\big)
\gtrsim_{p,q}
\begin{cases}
1       
           &:\,  1 \leq n \leq (1-\cHPV)N^2,\\
\sqrt{\frac{N^2-n+1}{N^2}}^{\frac{1/p-1/q}{1/2-1/q}}
          &:\, (1-\cHPV)N^2 \leq n\leq N^2 - \cHPV N^{1+2/q}+1,\\
N^{1/q-1/p} 
          &:\, N^2 - \cHPV N^{1+2/q} +1\leq n \leq N^2.
\end{cases}
\]
Here $c(q)\in(0,\infty)$ is a constant that depends only on $q$.
	\item
	\label{item-appr-two-triangles-b}
	 If $1\leq p\leq q\leq 2$, then  $a_n\big(\Sc_p^N\hookrightarrow \Sc_q^N\big)  = a_n\big(\Sc_{q^*}^N\hookrightarrow \Sc_{p^*}^N\big)$ and so, by item~\ref{item-appr-two-triangles-a}, we get the estimates 
	\[
a_n\big(\Sc_p^N \hookrightarrow \Sc_q^N\big)
\lesssim_{p}
\begin{cases}
1       
           &:\,  1 \leq n \leq N^2 - c(p) N^{3-2/q} +1,\\
N^{-3/2 + 1/q} \sqrt{N^2-n+1}
          &:\, N^2 - c(p) N^{3-2/q} +1\leq n\leq N^2 -  \cHPV N^{3-2/p}+1,\\
N^{1/q-1/p} 
          &:\, N^2 - \cHPV N^{3-2/p} +1\leq n \leq N^2,
\end{cases}
\]
and
\[
a_n\big(\Sc_p^N \hookrightarrow \Sc_q^N\big)
\gtrsim_{p,q}
\begin{cases}
1       
           &:\,  1 \leq n \leq (1-\cHPV)N^2,\\
\sqrt{\frac{N^2-n+1}{N^2}}^{\frac{1/p-1/q}{1/p-1/2}}
          &:\, (1-\cHPV)N^2 \leq n\leq N^2 - \cHPV N^{3-2/p}+1,\\
N^{1/q-1/p} 
          &:\, N^2 - \cHPV N^{3-2/p} +1\leq n \leq N^2.
\end{cases}
\]
	\end{enumerate}
\item
\label{item-appr-quasi-low}
If $0<p\leq 1$, $2\leq q\leq \infty$, then
\[
a_n\big(\Sc_p^N\hookrightarrow \Sc_q^N\big) = a_n\big(\Sc_1^N\hookrightarrow \Sc_q^N\big)
\asymp_q 
\begin{cases}
\min\Bigl\{ 1, \frac{N^{1/2+1/q}}{n^{1/2}}\Bigr\}
     &:\, 1 \leq n\leq (1-\cHPV)N^2, \\
N^{-3/2+1/q} \sqrt{N^2-n+1}
    &:\, (1-\cHPV )N^2  \leq n\leq N^2 -  \cHPV N+1,\\
N^{1/q-1}
    &:\,  N^2 -\cHPV N +1\leq n \leq N^2.
\end{cases}
\]
\item
\label{item-appr-quasi-up}
If $0<p\leq 1\leq q\leq 2$, then $a_n\big(\Sc_p^N\hookrightarrow \Sc_q^N\big) = a_n\big(\Sc_1^N\hookrightarrow \Sc_q^N\big)$ and so, by item~\ref{item-appr-two-triangles-b} applied with $p=1$, we get the estimates 
\[
a_n\big(\Sc_1^N \hookrightarrow \Sc_q^N\big)
\lesssim
\begin{cases}
	1       
	&:\,  1 \leq n \leq N^2 - c(1) N^{3-2/q} +1,\\
	N^{-3/2 + 1/q} \sqrt{N^2-n+1}
	&:\, N^2 - c(1) N^{3-2/q} +1\leq n\leq N^2 -  \cHPV N+1,\\
	N^{1/q-1} 
	&:\, N^2 - \cHPV N +1\leq n \leq N^2,
\end{cases}
\]
and
\[
a_n\big(\Sc_1^N \hookrightarrow \Sc_q^N\big)
\gtrsim_{q}
\begin{cases}
	1       
	&:\,  1 \leq n \leq (1-\cHPV)N^2,\\
	\Bigl(\frac{N^2-n+1}{N^2}\Bigr)^{1-1/q}
	&:\, (1-\cHPV)N^2 \leq n\leq N^2 - \cHPV N+1,\\
	N^{1/q-1} 
	&:\, N^2 - \cHPV N +1\leq n \leq N^2.
\end{cases}
\]
\item
\label{item-appr-quasi-1}
If $0<p \leq q\leq 1$, then
\[
a_n\big(\Sc_p^N\hookrightarrow \Sc_q^N\big) = 1.\]
\end{enumerate}
\end{thmalpha}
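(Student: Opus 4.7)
The plan is to combine several classical $s$-number tools with the Gelfand-number estimates from \cite{HPV2020} and Theorem~\ref{thm:main-Gelfand}, and to reduce the quasi-Banach range to the Banach case $p=1$. The basic toolkit consists of: approximation-number duality $a_n(T)=a_n(T^*)$, which via trace duality gives $a_n(\Sc_p^N\hookrightarrow\Sc_q^N)=a_n(\Sc_{q^*}^N\hookrightarrow\Sc_{p^*}^N)$ for $1\le p,q\le\infty$; Gelfand--Kolmogorov duality $d_n(T)=c_n(T^*)$; the Hilbert-space coincidences $a_n(\Sc_p^N\hookrightarrow\Sc_2^N)=d_n(\Sc_p^N\hookrightarrow\Sc_2^N)=c_n(\Sc_2^N\hookrightarrow\Sc_{p^*}^N)$ and $a_n(\Sc_2^N\hookrightarrow\Sc_q^N)=c_n(\Sc_2^N\hookrightarrow\Sc_q^N)$; the general inequalities $a_n\ge\max(c_n,d_n)$ together with multiplicativity $a_n(ST)\le\min\{\|S\|a_n(T),a_n(S)\|T\|\}$; and the reduction $a_n(\Sc_p^N\hookrightarrow\Sc_q^N)=a_n(\Sc_1^N\hookrightarrow\Sc_q^N)$ valid for $0<p\le 1\le q\le\infty$. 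The nontrivial direction of this reduction relies on the classical fact that the unit ball of $\Sc_1^N$ is the convex hull of rank-one matrices of unit operator norm — all of which lie in the (smaller, non-convex) $\Sc_p^N$-unit ball — together with the triangle inequality in $\Sc_q^N$, which is available since $q\ge 1$.

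With this toolkit, the proof proceeds case by case. Item~\ref{item-appr-upper-triangle} ($q\le p$) follows from $a_n\ge c_n$ and the sharp HPV2020 Gelfand formula in this regime, with a matching upper bound by explicit singular-value truncation. For items~\ref{item-appr-square} and~\ref{item-appr-two-triangles} the upper bound combines factorisation through $\Sc_2^N$ with a probabilistic construction: multiplicativity together with the Hilbert-space identities yields $a_n(\Sc_p^N\to\Sc_q^N)\le\min\{\|\Sc_p\to\Sc_2\|\cdot a_n(\Sc_2\to\Sc_q),\,a_n(\Sc_p\to\Sc_2)\cdot\|\Sc_2\to\Sc_q\|\}$ and rewrites the right-hand side in terms of known Gelfand numbers, which is sharp when $p=2$ or $q=2$ but in general needs to be supplemented by a random-projection construction to reach the correct power of $N$. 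The matching lower bounds come from $a_n\ge\max(c_n,d_n)$ combined with the Gelfand estimates (using duality for the $d_n$ side). Items~\ref{item-appr-quasi-low} and~\ref{item-appr-quasi-up} reduce to item~\ref{item-appr-square} or~\ref{item-appr-two-triangles-b} at $p=1$ via the reduction described above. Item~\ref{item-appr-quasi-1} ($p\le q\le 1$) is handled separately: the upper bound $a_n\le\|\id\|=1$ is trivial, and the lower bound exploits the non-convex structure of the $\Sc_p^N$-unit ball — essentially, for any rank-$<n$ operator $T_n$ with $n\le N^2$ one produces a rank-one matrix $uv^*$ with $\|uv^*\|_{\Sc_p}=1$ and $\|uv^*-T_n(uv^*)\|_{\Sc_q}\ge 1$.

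The main obstacle is the ``balanced'' regime of item~\ref{item-appr-square}, in particular the intermediate range $(1-\cHPV)N^2\le n\le N^2-\cHPV N^{\critmax_{p,q}}+1$ in which upper and lower bounds differ by a factor of $\sqrt{\log N}$. In this regime the Hilbert-space factorisation alone gives the wrong power of $N$: for instance at $p=1,q=\infty$ it only yields $\min\{1,N/\sqrt n\}$, whereas the claimed (and sharp) bound is $\min\{1,\sqrt{N/n}\}$, worse by a factor of $\sqrt N$. The remedy is a probabilistic construction in the spirit of \cite{GKS1987}: a random coordinate projection in the canonical basis $\{E_{ij}\}$ of $\Sc_2^N$ yields an operator of rank $n-1$ whose $\Sc_q^N$-error is controlled by a random-submatrix operator-norm estimate, and this gives the correct exponent up to a $\sqrt{\log N}$ factor. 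The logarithm can be eliminated only in the extremal cases $p=1$ or $q=\infty$, where the source unit ball's extreme-point structure permits a deterministic choice. A secondary obstacle is obtaining the explicit constant~$1$ (rather than an absolute constant) in the lower bound of Theorem~\ref{thm:main-Gelfand} at $p=1$ or $q=\infty$, which requires a direct trace-type inequality rather than an abstract $s$-number argument.
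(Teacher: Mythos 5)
Your plan correctly reuses the $s$-number toolkit (duality, factorisation, $a_n\ge\max(c_n,d_n)$, $\conv(B_p^N)=B_1^N$ for the quasi-Banach reduction) and matches the paper's organisation for items~\ref{item-appr-upper-triangle}, \ref{item-appr-two-triangles}, \ref{item-appr-quasi-low}, and~\ref{item-appr-quasi-up}. But there is a genuine gap at the heart of item~\ref{item-appr-square}, in the \emph{large-$n$} range $N^2-\cHPV N^{\critmax_{p,q}}+1\le n\le N^2-\cHPV N^{\critmin_{p,q}}+1$, where the theorem asserts the sharp, logarithm-free asymptotics $a_n\asymp_{p,q} N^{\critmax_{p,q}/2-2}\sqrt{N^2-n+1}$ for \emph{all} $1\le p\le 2\le q\le\infty$, not just $p=1$ or $q=\infty$. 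Your proposed upper-bound machinery — factorisation through $\Sc_2^N$ supplemented by GKS-style random coordinate projections in the basis $\{E_{ij}\}$ — is exactly what produces the $\sqrt{\log N}$ in \cite[Proposition 3.7]{GKS1987}; it does not recover the log-free bound here. The paper's actual proof (Proposition~\ref{prop:approximation-square-upper-bd-large-n}) hinges on a Gluskin-type trick that is absent from your sketch: take the nearly-Euclidean subspace $L\subset\Sc_{p^*}^N$ of codimension $<n$ from the Gelfand-number upper bound of \cite{HPV2020}, let $P$ be the \emph{orthogonal} projection onto $L$ in the Hilbert space $\Sc_2^N$, and observe that $\rank(\id-P)<n$ together with $P=P^2=P^*$ gives
\begin{equation*}
  a_n(\Sc_p^N\hookrightarrow\Sc_{p^*}^N)\le\|P^2\colon\Sc_p^N\to\Sc_{p^*}^N\|\le\|P\colon\Sc_p^N\to\Sc_2^N\|\cdot\|P\colon\Sc_2^N\to\Sc_{p^*}^N\|=\|P\colon\Sc_2^N\to\Sc_{p^*}^N\|^2\lesssim_p N^{1-2/p},
\end{equation*}
after which the full large-$n$ regime follows by factorisation through an intermediate $\Sc_s^N$ (Step 2 in the paper). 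Without this square-projector idea your bound in this range would be off by a power of $N$ (from the $\Sc_2$-factorisation) or carry a $\sqrt{\log N}$ (from GKS), so this is not a cosmetic omission.

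A secondary, smaller issue is item~\ref{item-appr-quasi-1}. Your outline of producing a rank-one witness $uv^*$ with $\|uv^*-T_n(uv^*)\|_{\Sc_q}\ge 1$ for an arbitrary rank-$<n$ operator $T_n$ is not justified as stated: one cannot simply normalise an element of $\ker T_n$, since $\ker T_n$ need not contain rank-one matrices, and a generic $M\in\ker T_n$ only gives $\|M\|_{\Sc_q}\le\|M\|_{\Sc_p}$. The argument can be repaired (the $\Sc_1$-distance from $B_1^N$ to any proper subspace is $\ge 1$ by the convex-hull structure of $B_1^N$, and $\|\cdot\|_{\Sc_q}\ge\|\cdot\|_{\Sc_1}$ when $q\le 1$), but the paper sidesteps this by a clean chain using the Riesz lemma for quasi-normed spaces (Lemma~\ref{lem:approximation-Kolmogorov-id-1} and Proposition~\ref{prop:approximation-Kolmogorov-1}), namely $1=d_n(\Sc_1^N\hookrightarrow\Sc_1^N)=d_n(\Sc_p^N\hookrightarrow\Sc_1^N)\le d_n(\Sc_p^N\hookrightarrow\Sc_q^N)\le a_n(\Sc_p^N\hookrightarrow\Sc_q^N)\le a_n(\Sc_p^N\hookrightarrow\Sc_p^N)=1$. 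Finally, your phrase ``explicit singular-value truncation'' for item~\ref{item-appr-upper-triangle} should be read carefully: the best rank-$k$ approximation $A\mapsto\sum_{i\le k}\sigma_i(A)u_iv_i^T$ is not linear in $A$ and cannot serve as $T_n$; the paper instead uses the \emph{linear} projection that zeroes out a block of columns (an orthogonal projection in $\Sc_2^N$) together with the interlacing property of singular values under column deletion.
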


Finally, for the convenience of the reader and future reference, we summarize all the known results for Kolmogorov numbers
(including those which follow directly by duality from respective estimates for Gelfand numbers contained, e.g., in \cite{HPV2020}).
Our contribution is the case $0<p<1$ and the estimates in the intermediate ranges implied by Theorem~\ref{thm:main-Gelfand} above. 


\begin{figure}[h] 
		\begin{subfigure}{0.45\textwidth}
			\centering
		\begin{tikzpicture}[scale=2]
		\draw [fill=MyFillColor] (0,0) -- (3,3) -- (0,3);
		\draw [fill=MyFillColor] (1,1) -- (2,1) -- (2,2);
		\draw [fill=MyFillColor] (2,1) -- (3,1) -- (3,3) -- (2,2);
		\draw [pattern=MyPattern, pattern color=MyLinesColor] (0,0) -- (1,0) -- (1,1);
		\draw [fill=MyFillColor] (1,0) rectangle (2,1);
		\draw [pattern=MyPatternOther, pattern color=MyLinesColor] (2,0) rectangle (3,1);
		\draw (0,0) -- (0,3);
		\draw (0,0) -- (3,0);
		\draw (0,3) -- (3,3);
		\draw (3,0) -- (3,3);
		\draw (0,0) -- (3,3);
		\draw (1,0) -- (1,1);
		\draw (2,0) -- (2,2);
		\draw [loosely dotted] (0,1) -- (1,1);
		\draw (1,1) -- (3,1);
		\draw [loosely dotted] (0,2) -- (2,2);
		\node [below] at (-0.05,0) {$q=p=\infty$};
		\node [below] at (1,0) {$p=2$};
		\node [below] at (2,0) {$p=1$};
		\node [below] at (3,0) {$p=0$};		
		\node [left] at (0,1) {$q=2$};
		\node [left] at (0,2) {$q=1$};
		\node [left] at (0,3)  {$q=0$};
					\node [] at (1.8,0.2) {};
					\node [] at (2.8,0.2) {};
		\end{tikzpicture}
		\caption{{Gelfand numbers}}
		 \label{subfig:Gelfand}
\end{subfigure}
\qquad
\begin{subfigure}{0.45\textwidth}
	\centering
		\begin{tikzpicture}[scale=2]
			\draw [fill=MyFillColor] (0,0) -- (2,2) -- (0,2);
			\draw [fill=MyFillColor] (0,0) -- (1,0) -- (1,1);
			\draw [fill=MyFillColor] (1,0) rectangle (2,1);
			\draw [fill=MyFillColor] (2,0) rectangle (3,1);
			\draw [fill=MyFillColor] (2,2) -- (3,2) -- (3,3);			
			\draw [pattern=MyPattern, pattern color=MyLinesColor] (1,1) -- (2,1) -- (2,2);
			\draw [pattern=MyPattern, pattern color=MyLinesColor] (2,1) rectangle (3,2);
			\draw (0,0) -- (0,3);
			\draw (0,0) -- (3,0);
			\draw (0,3) -- (3,3);
			\draw (3,0) -- (3,3);
			\draw (0,0) -- (3,3);
			\draw (1,0) -- (1,1);
			\draw (2,0) -- (2,2);
			\draw [loosely dotted] (0,1) -- (1,1);
			\draw (1,1) -- (3,1);
			\draw (0,2) -- (3,2);
			\node [below] at (-0.05,0) {$q=p=\infty$};
			\node [below] at (1,0) {$p=2$};
			\node [below] at (2,0) {$p=1$};
			\node [below] at (3,0) {$p=0$};		
			\node [left] at (0,1) {$q=2$};
			\node [left] at (0,2) {$q=1$};
			\node [left] at (0,3)  {$q=0$};
			\node [] at (0.8,2.2) {\ref{item-Kolmogorov-upper-triangle-q-quasi}.};
			\node [] at (0.8,1.2) {\ref{item-Kolmogorov-upper-triangle}.};
			\node [] at (1.8,0.2) {\ref{item-Kolmogorov-square}.};
			\node [] at (0.8,0.2) {\ref{item-Kolmogorov-two-triangles-a}.};
			\node [] at (1.8,1.2) {\ref{item-Kolmogorov-two-triangles-b}.};
			\node [] at (2.8,0.2) {\ref{item-Kolmogorov-quasi-low}.};
			\node [] at (2.8,1.2) {\ref{item-Kolmogorov-quasi-up}.};		
			\node [] at (2.8,2.2) {\ref{item-Kolmogorov-quasi-1}.};			
		\end{tikzpicture}
		\caption{Kolmogorov numbers}
		\label{subfig:Kolmogorov}
			\end{subfigure}
	\captionsetup{singlelinecheck=off}
	\caption[foo]{Summary of the results for Gelfand and  Kolmogorov numbers.
			Gray regions correspond to exact asymptotics, northeast (resp.\ northwest) lined regions to exact asymptotics for small (resp.\ intermediate) and large $n$, but nonmatching upper and lower bounds in the intermediate (resp.\ small) range of $n$.
		More precisely, for Kolmogorov numbers:
		\begin{itemize}
			\item[] \ref{item-Kolmogorov-upper-triangle-q-quasi} -- upper bound known, no lower bound;
			\item[] \ref{item-Kolmogorov-upper-triangle} -- $d_n(\Sc_p^N\hookrightarrow \Sc_q^N) \asymp_{p,q} \max\bigl\{ 1, \frac{N^2-n+1}{N}\bigr\}^{1/q-1/p}$;
			\item[] \ref{item-Kolmogorov-square}, \ref{item-Kolmogorov-two-triangles-a} -- exact asymptotics known;	
			\item[]	\ref{item-Kolmogorov-two-triangles-b} -- nonmatching upper and lower bounds in the intermediate range;
			\item[]	\ref{item-Kolmogorov-quasi-low} -- no dependence on $p$: $d_n(\Sc_p^N\hookrightarrow \Sc_q^N) = d_n(\Sc_1^N\hookrightarrow \Sc_q^N)$, exact asymptotics known;
			\item[] \ref{item-Kolmogorov-quasi-up} --  no dependence on $p$: $d_n(\Sc_p^N\hookrightarrow \Sc_q^N) = d_n(\Sc_1^N\hookrightarrow \Sc_q^N)$, nonmatching upper and lower bounds in the intermediate range as in \ref{item-Kolmogorov-two-triangles-b};
			\item[] \ref{item-Kolmogorov-quasi-1} -- $d_n(\Sc_p^N\hookrightarrow \Sc_q^N) =1$.
		\end{itemize}
	}   
	\label{fig:Kolmogorov-results} 
\end{figure}
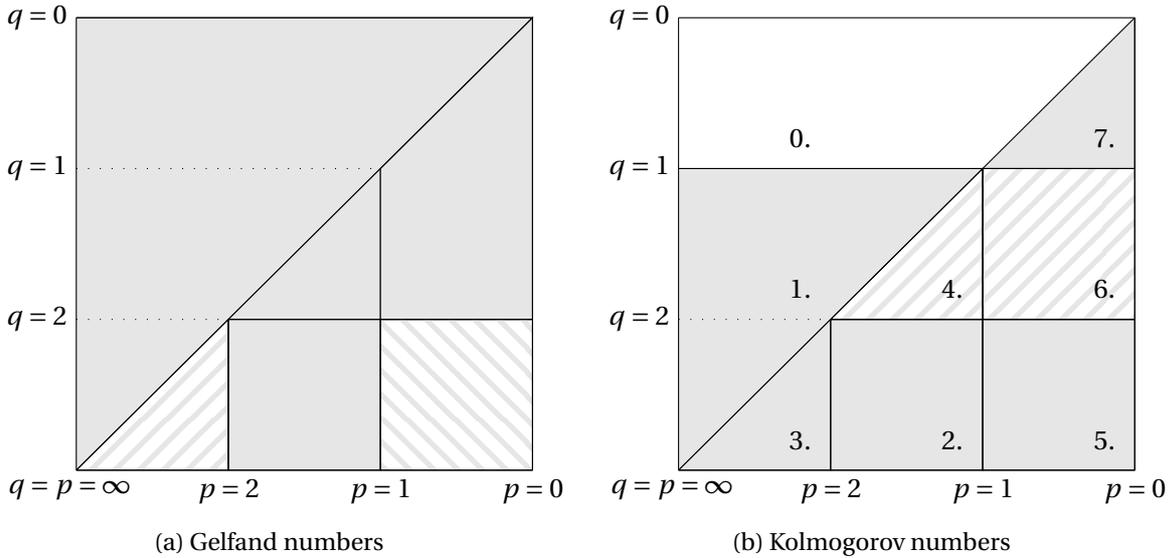

 
\begin{thmalpha}[Results for Kolmogorov numbers]\label{thm: main-Kolmogorov}
	Let $0<p,q\leq \infty$ and assume that $n,N\in\N$ with $1\leq n \leq N^2$.
	Let $\cHPV \in(0,1)$ be the universal constant from \cite[Lemma 2.5]{HPV2020}.
	Then the following estimates hold (see Figure~\ref{fig:Kolmogorov-results}).
	\begin{enumerate}
		\setcounter{enumi}{-1} 
		\item
		\label{item-Kolmogorov-upper-triangle-q-quasi}
		If $0< q <1$, $q\leq p \leq \infty$, then
		\[
		d_n\big(\Sc_p^N\hookrightarrow \Sc_q^N\big)
		\lesssim_{p,q} 
		\max\Bigl\{ 1, \frac{N^2-n+1}{N}\Bigr\}^{1/q-1/p}.
		\]
		Moreover, there exists a constant $c(p,q)\in(0,2]$ such that
        \[
			d_{\lfloor c(p,q) N^2\rfloor+1}\big(\Sc_p^{2N} \hookrightarrow \Sc_q^{2N}\big) 
			\gtrsim_{p,q} N^{1/q-1/p}.
		\]
		\item
		\label{item-Kolmogorov-upper-triangle}
		If $1\leq q\leq p \leq \infty$, then
		\[
		d_n\big(\Sc_p^N\hookrightarrow \Sc_q^N\big)
		\asymp_{p,q} 
		\max\Bigl\{ 1, \frac{N^2-n+1}{N}\Bigr\}^{1/q-1/p}.
		\]
		\item
		\label{item-Kolmogorov-square}
		If $1\leq p\leq 2\leq q\leq \infty$, then
		\begin{align*}
			d_n\big(\Sc_p^N\hookrightarrow \Sc_q^N\big)
			\asymp_{p,q} 
			\begin{cases}
				\min\Bigl\{ 1, \frac{N^{1/2+1/q}}{n^{1/2}}\Bigr\} 
				&:\, 1 \leq n\leq (1-\cHPV)N^2, \\
				N^{-3/2 + 1/q} \sqrt{N^2-n+1}
				&:\, (1-\cHPV)N^2 \leq n \leq N^2 -  \cHPV N^{3-2/p}+1,\\
				N^{1/q-1/p}
				&:\,  N^2 -\cHPV N^{3-2/p} +1\leq n \leq N^2.
			\end{cases}
		\end{align*}
		\item 
		\label{item-Kolmogorov-two-triangles-a}
			If $2\leq p\leq q\leq \infty$, then 
			\[
			d_n\big(\Sc_p^N \hookrightarrow \Sc_q^N\big)
			\asymp_{p,q}
\min\Biggl\{1,\frac{N^{1/2+1/q}}{n^{1/2}}\Biggr\}^{\frac{1/p-1/q}{1/2-1/q}}.
			\]
		\item
		\label{item-Kolmogorov-two-triangles-b}
			If $1\leq p\leq q\leq 2$, then  $d_n\big(\Sc_p^N\hookrightarrow \Sc_q^N\big)  = c_n\big(\Sc_{q^*}^N\hookrightarrow \Sc_{p^*}^N\big)$ and so we get the estimates 
			\[
			d_n\big(\Sc_p^N \hookrightarrow \Sc_q^N\big)
			\lesssim_{p}
			\begin{cases}
				1       
				&:\,  1 \leq n \leq N^2 - c(p) N^{3-2/q} +1,\\
				N^{-3/2 + 1/q} \sqrt{N^2-n+1}
				&:\, N^2 - c(p) N^{3-2/q} +1\leq n\leq N^2 -  \cHPV N^{3-2/p}+1,\\
				N^{1/q-1/p} 
				&:\, N^2 - \cHPV N^{3-2/p} +1\leq n \leq N^2,
			\end{cases}
			\]
			and
			\[
			d_n\big(\Sc_p^N \hookrightarrow \Sc_q^N\big)
			\gtrsim_{p,q}
			\begin{cases}
				1       
				&:\,  1 \leq n \leq (1-\cHPV )N^2,\\
				\sqrt{\frac{N^2-n+1}{N^2}}^{\frac{1/p-1/q}{1/p-1/2}}
				&:\, (1-\cHPV )N^2 \leq n\leq N^2 - \cHPV  N^{3-2/p}+1,\\
				N^{1/q-1/p} 
				&:\, N^2 - \cHPV  N^{3-2/p} +1\leq n \leq N^2.
			\end{cases}
			\]
		\item
		\label{item-Kolmogorov-quasi-low}
		If $0<p\leq 1$, $2\leq q\leq \infty$, then
		\[
		d_n\big(\Sc_p^N\hookrightarrow \Sc_q^N\big) = d_n\big(\Sc_1^N\hookrightarrow \Sc_q^N\big)
		\asymp_q 
		\begin{cases}
			\min\Bigl\{ 1, \frac{N^{1/2+1/q}}{n^{1/2}}\Bigr\}
			&:\, 1 \leq n\leq (1-\cHPV)N^2, \\
			N^{-3/2+1/q} \sqrt{N^2-n+1}
			&:\, (1-\cHPV )N^2  \leq n\leq N^2 -  \cHPV N+1,\\
			N^{1/q-1}
			&:\,  N^2 -\cHPV N +1\leq n \leq N^2.
		\end{cases}
		\]
		\item
		\label{item-Kolmogorov-quasi-up}
		If $0<p\leq 1\leq q\leq 2$, then $d_n\big(\Sc_p^N\hookrightarrow \Sc_q^N\big) = d_n\big(\Sc_1^N\hookrightarrow \Sc_q^N\big)$ and so, by item~\ref{item-Kolmogorov-two-triangles-b} applied with $p=1$, we get the estimates 
		\[
		d_n\big(\Sc_1^N \hookrightarrow \Sc_q^N\big)
		\lesssim
		\begin{cases}
			1       
			&:\,  1 \leq n \leq N^2 - c(1) N^{3-2/q} +1,\\
			N^{-3/2 + 1/q} \sqrt{N^2-n+1}
			&:\, N^2 - c(1) N^{3-2/q} +1\leq n\leq N^2 -  \cHPV N+1,\\
			N^{1/q-1} 
			&:\, N^2 - \cHPV N +1\leq n \leq N^2,
		\end{cases}
		\]
		and
		\[
		d_n\big(\Sc_1^N \hookrightarrow \Sc_q^N\big)
		\gtrsim_{q}
		\begin{cases}
			1       
			&:\,  1 \leq n \leq (1-\cHPV)N^2,\\
			\Bigl(\frac{N^2-n+1}{N^2}\Bigr)^{1-1/q}
			&:\, (1-\cHPV)N^2 \leq n\leq N^2 - \cHPV N+1,\\
			N^{1/q-1} 
			&:\, N^2 - \cHPV N +1\leq n \leq N^2.
		\end{cases}
		\]		
		\item
		\label{item-Kolmogorov-quasi-1}
		If $0<p \leq q\leq 1$, then
		\[
		d_n\big(\Sc_p^N\hookrightarrow \Sc_q^N\big) = 1.\]
	\end{enumerate}
\end{thmalpha}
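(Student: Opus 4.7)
The central tool is the Gelfand--Kolmogorov trace duality: for a compact operator $T:X\to Y$ between Banach spaces one has $d_n(T)=c_n(T^*)$, which, combined with the trace identification $(\Sc_r^N)^*\cong\Sc_{r^*}^N$ valid for $1\le r\le\infty$, gives $d_n(\Sc_p^N\hookrightarrow\Sc_q^N)=c_n(\Sc_{q^*}^N\hookrightarrow\Sc_{p^*}^N)$ throughout the Banach regime $1\le p,q\le\infty$. For items~\ref{item-Kolmogorov-upper-triangle}--\ref{item-Kolmogorov-two-triangles-b} I would simply read off the corresponding Gelfand asymptotics after swapping indices. Concretely, the upper-triangle regime $1\le q\le p$ dualizes to itself and is covered by case~1 of \cite[Theorem~A]{HPV2020}; the square regime $1\le p\le 2\le q$ dualizes to itself, so one combines the small-$n$ bound of \cite{HPV2020} with the intermediate- and large-$n$ bounds from Theorem~\ref{thm:main-Gelfand} above; the two lower triangles $2\le p\le q$ and $1\le p\le q\le 2$ dualize into each other, so item~\ref{item-Kolmogorov-two-triangles-a} becomes the sharp Gelfand asymptotic of \cite{HPV2020} for $1\le p\le q\le 2$, while item~\ref{item-Kolmogorov-two-triangles-b} inherits the (nonmatching) bounds of \cite{HPV2020} for $2\le p\le q$, together with the intermediate-range improvement provided by Theorem~\ref{thm:main-Gelfand}.

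For items~\ref{item-Kolmogorov-quasi-low} and~\ref{item-Kolmogorov-quasi-up}, where $0<p\le 1$ and $q\ge 1$, the Banach-space duality is no longer available, and the strategy is to reduce to $p=1$ via extreme points. The rank-one operators of operator norm one lie in $B(\Sc_p^N)$ for every $0<p\le\infty$ and are precisely the extreme points of $B(\Sc_1^N)$, hence $\conv B(\Sc_p^N)=B(\Sc_1^N)$ whenever $p\le 1$. Since, for $q\ge 1$ and any subspace $E\subset\Sc_q^N$, the map $A\mapsto\inf_{B\in E}\|A-B\|_q$ is convex, its supremum over $B(\Sc_p^N)$ agrees with its supremum over $B(\Sc_1^N)$, and hence $d_n(\Sc_p^N\to\Sc_q^N)=d_n(\Sc_1^N\to\Sc_q^N)$. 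This reduces item~\ref{item-Kolmogorov-quasi-low} to item~\ref{item-Kolmogorov-square} and item~\ref{item-Kolmogorov-quasi-up} to item~\ref{item-Kolmogorov-two-triangles-b}, in both cases applied at $p=1$.

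The genuinely quasi-Banach cases $q<1$, items~\ref{item-Kolmogorov-upper-triangle-q-quasi} and~\ref{item-Kolmogorov-quasi-1}, must be treated directly. For item~\ref{item-Kolmogorov-quasi-1} the bound $d_n\le\|I:\Sc_p^N\to\Sc_q^N\|=1$ is trivial, while the matching lower bound $d_n\ge 1$ follows from the fact that the Kolmogorov $n$-width of the unit ball of a finite-dimensional quasi-normed space in itself is bounded below by one for all $n$ not exceeding the dimension, transferred to the cross-class setting via $\|A\|_q\le\|A\|_p$ valid for $p\le q\le 1$. For item~\ref{item-Kolmogorov-upper-triangle-q-quasi} the upper bound comes from truncation: replacing $A\in B(\Sc_p^N)$ by its best rank-$r$ approximation with $r\approx N-(N^2-n+1)/N$ lands within $\Sc_q$-quasi-distance $\max\{1,(N^2-n+1)/N\}^{1/q-1/p}$ of a fixed subspace of $\Sc_q^N$ of dimension below $n$. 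For the lower bound near $n\asymp N^2$ one cannot dualize; I would rely on a Carl--Pajor style volume/packing estimate for $B(\Sc_q^{2N})$, exploiting that quotienting by any subspace of dimension below $c(p,q)(2N)^2$ cannot shrink the image of $B(\Sc_p^{2N})$ in $\Sc_q^{2N}$ below a constant multiple of $N^{1/q-1/p}$.

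I expect the principal obstacle to be this last lower bound in item~\ref{item-Kolmogorov-upper-triangle-q-quasi}: without duality one must work directly in $\Sc_q^{2N}$ with $q<1$, where the absence of the triangle inequality makes standard convex-geometric tools (Urysohn-type inequalities, covering and packing estimates) comparatively delicate. A more technical secondary task is to propagate the precise thresholds $(1-\cHPV)N^2$, $\cHPV N^{3-2/p}$, and $\cHPV N^{1+2/q}$ through the trace duality so that the $n$-ranges in items~\ref{item-Kolmogorov-upper-triangle}--\ref{item-Kolmogorov-two-triangles-b} match exactly those dictated by the Gelfand side.
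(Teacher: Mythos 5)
Your duality reductions for items~\ref{item-Kolmogorov-upper-triangle}--\ref{item-Kolmogorov-two-triangles-b}, and the convex-hull reduction $\conv B_p^N = B_1^N$ for items~\ref{item-Kolmogorov-quasi-low}--\ref{item-Kolmogorov-quasi-up}, are exactly what the paper does (the latter is Lemma~\ref{lem:approximation-Kolmogorov-p-quasi}, proved via Lemma~\ref{lem:convex hull quasi balls}). For item~\ref{item-Kolmogorov-quasi-1}, the paper's Proposition~\ref{prop:approximation-Kolmogorov-1} makes your ``transfer'' precise: it first invokes Lemma~\ref{lem:approximation-Kolmogorov-p-quasi} to get $d_n(\Sc_p^N\hookrightarrow\Sc_1^N)=d_n(\Sc_1^N\hookrightarrow\Sc_1^N)=1$ (the latter via the Riesz lemma for quasi-normed spaces, Lemma~\ref{lem:approximation-Kolmogorov-id-1}), and then factorizes $d_n(\Sc_p^N\hookrightarrow\Sc_1^N)\le d_n(\Sc_p^N\hookrightarrow\Sc_q^N)\|\Sc_q^N\hookrightarrow\Sc_1^N\|$ using $\|\Sc_q^N\hookrightarrow\Sc_1^N\|=1$ for $q\le 1$; your one-line ``transferred via $\|A\|_q\le\|A\|_p$'' glosses over this factorization chain, which you should write out.

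The genuine divergence is in the lower bound of item~\ref{item-Kolmogorov-upper-triangle-q-quasi}, which you identify as the main obstacle but attack with a speculative ``Carl--Pajor style volume/packing estimate.'' The paper's Proposition~\ref{prop:kolmogorov-upper-triangle} uses a cleaner tool: Carl's inequality for quasi-Banach spaces, in the form $\sup_{k\le 2N^2} k^\alpha e_k \lesssim_{\alpha,p,q} \sup_{k\le 2N^2} k^\alpha d_k$, combined with the sharp entropy number estimates from \cite{HPV2017}, which force the supremum on the right-hand side to be achieved at some $k_N\asymp N^2$. Note also that the statement only claims a lower bound at a \emph{single} index $\lfloor c(p,q)N^2\rfloor+1$, not the full conjectured range (that is Conjecture~\ref{conj:Kolmogorov}); your phrasing, ``quotienting by any subspace of dimension below $c(p,q)(2N)^2$ cannot shrink the image\ldots,'' reads as if aiming for the stronger full-range bound, which the Carl-inequality argument does not deliver and which remains open. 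For the upper bound in item~\ref{item-Kolmogorov-upper-triangle-q-quasi}, your truncation argument recapitulates the content of Proposition~\ref{prop:approximation-upper-triangle-upper-bd}; the paper merely invokes $d_n\le a_n$ and cites that result.
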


\subsection{Missing cases and conjectures}

Below we list the cases where Theorems~\ref{thm:main-Gelfand}, \ref{thm: main-approximation}, and \ref{thm: main-Kolmogorov}
do not describe the exact asymptotics of approximation, Gelfand, and Kolmogorov numbers,
and  pose conjectures about the behavior of the $s$-numbers in those cases.
In particular, we believe that for $0\leq p, q \leq \infty$ it is true that
\[
a_n\big(\Sc_p^N \hookrightarrow \Sc_q^N\big) \asymp_{p,q} \max\Bigl\{c_n\big(\Sc_p^N \hookrightarrow \Sc_q^N\big), d_n\big(\Sc_p^N \hookrightarrow \Sc_q^N\big)\Bigr\}.
\]

The first conjecture appears already in \cite{GKS1987} and concerns the logarithms in the upper bound for approximation numbers in the case corresponding to region~\ref*{item-appr-square} in Figure~\ref{fig:approximation results}.

\begin{conjecture}
	\label{conj:log}
	If $ 1< p\leq q < \infty$ and $n,N\in \N$ with $(1-\cHPV) N^2 \leq n \leq N^2-\cHPV N^{\critmax_{p,q}} + 1$, then
	\begin{equation*}
		a_n\big(\Sc_p^N \hookrightarrow \Sc_q^N\big)
		 \lesssim_{p,q} N^{\critmax_{p,q}/2-2} \sqrt{N^2-n+1}.
	\end{equation*}
\end{conjecture}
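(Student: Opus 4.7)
The plan is to derive Conjecture~\ref{conj:log} from the structural inequality
\[
a_n\big(\Sc_p^N\hookrightarrow \Sc_q^N\big) \lesssim_{p,q} \max\Bigl\{c_n\big(\Sc_p^N\hookrightarrow \Sc_q^N\big),\, d_n\big(\Sc_p^N\hookrightarrow \Sc_q^N\big)\Bigr\}
\]
already proposed at the start of Subsection~1.3. Indeed, for $1 < p \leq 2 \leq q < \infty$ and $n$ in the intermediate range $(1-\cHPV)N^2 \leq n \leq N^2 - \cHPV N^{\critmax_{p,q}} + 1$, the asserted upper bound $N^{\critmax_{p,q}/2 - 2}\sqrt{N^2-n+1}$ coincides either with the sharp Gelfand bound $N^{-1/2-1/p}\sqrt{N^2-n+1}$ from Theorem~\ref{thm:main-Gelfand} (when $1/p+1/q \leq 1$, so $\critmax_{p,q} = 3-2/p$) or with the sharp Kolmogorov bound $N^{-3/2+1/q}\sqrt{N^2-n+1}$ from item~\ref{item-Kolmogorov-square} of Theorem~\ref{thm: main-Kolmogorov} (when $1/p+1/q \geq 1$, so $\critmax_{p,q} = 1+2/q$). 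In either regime the log-free control on $c_n$ and $d_n$ is already in hand, so the conjecture reduces to the structural inequality above on this intermediate regime.

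To establish that inequality, I would pursue two complementary strategies. The first is a real-interpolation argument on the scale of operator ideals: the endpoints $p=1$ and $q=\infty$ of item~\ref{item-appr-square} already provide the log-free bound, and $p=q=2$ is trivial, so feeding these estimates into an interpolation inequality for approximation numbers of the form $a_{2n-1}(T_\theta) \lesssim a_n(T_0)^{1-\theta} a_n(T_1)^\theta$ applied to the compatible pair $\bigl(\Sc_1^N \to \Sc_q^N,\, \Sc_2^N\to \Sc_q^N\bigr)$, and then on the target side to $\bigl(\Sc_p^N \to \Sc_\infty^N,\, \Sc_p^N \to \Sc_2^N\bigr)$, should propagate the log-free bound throughout the full square region $1< p \leq 2 \leq q < \infty$. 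The second strategy is a direct probabilistic construction of the rank-$(n-1)$ approximant: revisit the Gordon--K\"onig--Sch\"utt-style random selection and replace the net/union-bound step (which is the source of the $\sqrt{\log N}$ loss) by a generic chaining estimate for the Gaussian process indexed by the appropriate Schatten balls, invoking Lata{\l}a's logarithm-free bound for the operator norm of a structured Gaussian matrix to avoid paying a logarithm in the supremum.

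The main obstacle is that the target inequality $a_n \lesssim_{p,q} \max\{c_n,d_n\}$ has no analogue in the abstract theory of $s$-numbers, where $c_n$ and $d_n$ are linked only by trace duality while $a_n$ may in principle exceed both by a large factor. The essential step is the construction of a single low-rank operator that simultaneously captures an optimal Gelfand restriction and an optimal Kolmogorov quotient, which seems to require exploiting the self-duality of $\Sc_2^N$ together with the explicit singular-value decomposition of $N\times N$ matrices. I would expect the argument to split according to the sign of $1/p+1/q-1$, mirroring the two branches in the definition of $\critmax_{p,q}$, and would not be surprised if even an intermediate improvement from $\sqrt{\log N}$ to a smaller power of $\log N$ required substantially new input beyond the techniques assembled in the present paper.
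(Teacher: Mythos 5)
The statement you are attacking is presented in the paper as an open conjecture (traced to Gordon, K\"onig, and Sch\"utt), not a theorem, so there is no proof in the paper to compare against. Your outline does not supply one, for two reasons. The ``reduction'' to $a_n(\Sc_p^N\hookrightarrow\Sc_q^N)\lesssim_{p,q}\max\{c_n,d_n\}$ simply rephrases the open question: that comparison is exactly what the authors offer at the start of Subsection~1.3 as a belief, it has no general analogue in the theory of $s$-numbers, and, as you acknowledge, producing a single rank-$(n-1)$ operator that simultaneously realizes the optimal Gelfand restriction and the optimal Kolmogorov quotient is the whole difficulty — nothing concrete in your sketch builds it.

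More decisively, the interpolation route fails on a bare exponent count, even granting a Peetre-type inequality $a_{2n-1}(T_\theta)\lesssim a_n(T_0)^{1-\theta}a_n(T_1)^\theta$. Fix the case $1/p+1/q\leq 1$, so $\critmax_{p,q}=3-2/p$ and the target exponent on $N$ is $-1/2-1/p$. Interpolating in the domain between the log-free endpoints $\Sc_1^N\to\Sc_q^N$ (exponent $-3/2+1/q$) and $\Sc_2^N\to\Sc_q^N$ (exponent $-1$) at the stage $1/p=1-\theta/2$ gives the exponent
\[
\Bigl(\tfrac2p-1\Bigr)\Bigl(-\tfrac32+\tfrac1q\Bigr)+\Bigl(2-\tfrac2p\Bigr)(-1)
=-\tfrac12-\tfrac1p+\frac{2-p}{pq},
\]
which overshoots the target by the strictly positive quantity $(2-p)/(pq)$ whenever $1<p<2$; the codomain interpolation between $\Sc_p^N\to\Sc_\infty^N$ and $\Sc_p^N\to\Sc_2^N$ loses exactly the same $(2-p)/(pq)$, and in the regime $1/p+1/q\geq 1$ the deficit is $(1-1/p)(1-2/q)>0$. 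So interpolating in one variable at a time is necessarily off by a positive power of $N$, not merely a logarithm, which is precisely why the Gelfand and Kolmogorov interpolation lemmas in Section~\ref{sec:prelim} cannot be stitched together naively. The generic-chaining idea is the only part of your proposal aimed at the real obstacle, but as written it invokes Lata{\l}a-type operator-norm bounds by name without specifying a random approximant, a Gaussian process, or an admissible sequence, and therefore leaves the conjecture exactly where the paper leaves it: open.
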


The next conjecture is about the behavior of Gelfand and approximation numbers in the triangle on the lower left-hand side of Figure~\ref{subfig:Gelfand}.
By duality and Lemma~\ref{lem:approximation-Kolmogorov-p-quasi}, it can be also equivalently stated in terms of the behavior of Kolmogorov numbers in regions~\ref*{item-Kolmogorov-two-triangles-b} and~\ref*{item-Kolmogorov-quasi-up} in Figure ~\ref{subfig:Kolmogorov}.

\begin{conjecture}
	\label{conj:HiMi}
If $ 2 \leq p\leq q \leq \infty$ and $n,N\in \N$ with $(1-\cHPV) N^2 \leq n \leq N^2 - \cHPV N^{1+2/q} + 1$, then
\begin{equation*}
 		c_n\big(\Sc_p^N \hookrightarrow \Sc_q^N\big) 
 		\leq a_n\big(\Sc_p^N \hookrightarrow \Sc_q^N\big) 
 		\lesssim_{p,q} \sqrt{\frac{N^2-n+1}{N^2}}^{\frac{1/p-1/q}{1/2-1/q}}.
\end{equation*}
\end{conjecture}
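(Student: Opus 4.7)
The first inequality $c_n \leq a_n$ is immediate from the definitions. For the upper bound on $a_n$, introduce the interpolation parameter $\theta\in[0,1]$ defined by $1/p = (1-\theta)/2 + \theta/q$, so that $1-\theta = (1/p-1/q)/(1/2-1/q)$ and the conjectured right-hand side equals $\bigl((N^2-n+1)/N^2\bigr)^{(1-\theta)/2}$. At $\theta=0$ (i.e.\ $p=2$), the Carl--Defant bound $a_n(\Sc_2^N\hookrightarrow \Sc_q^N)\lesssim_q \sqrt{(N^2-n+1)/N^2}$ already has exactly this form; at $\theta=1$ (i.e.\ $p=q$), the trivial bound $a_n(\Sc_q^N\hookrightarrow \Sc_q^N)\leq 1$ matches. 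This strongly suggests attacking the conjecture by complex interpolation of operators, using the Calder\'on--Krein identification $[\Sc_2^N,\Sc_q^N]_\theta = \Sc_p^N$ valid for the Schatten scale.

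The plan is to construct, for each $n$ in the stated range, a single linear map $T_n$ on $\R^{N\times N}$ of rank $<n$ simultaneously satisfying (i) $\|\id - T_n\|_{\Sc_2^N\to\Sc_q^N} \lesssim_q \sqrt{(N^2-n+1)/N^2}$ (so that $T_n$ realizes the Carl--Defant approximation) and (ii) $\|T_n\|_{\Sc_q^N\to\Sc_q^N} \leq C(q)$ with $C(q)$ independent of $n$ and $N$. Given such a $T_n$, complex interpolation applied to the single map $\id - T_n$ between the couples $(\Sc_2^N,\Sc_q^N)$ and $(\Sc_q^N,\Sc_q^N)$ yields
\[
a_n\bigl(\Sc_p^N\hookrightarrow\Sc_q^N\bigr)
\leq \|\id - T_n\|_{\Sc_p^N \to \Sc_q^N}
\leq \|\id - T_n\|_{\Sc_2^N\to\Sc_q^N}^{1-\theta} \cdot \|\id - T_n\|_{\Sc_q^N\to\Sc_q^N}^{\theta},
\]
and combining (i), (ii), and the triangle inequality on the last factor bounds the right-hand side by a $(p,q)$-constant times the conjectured expression.

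The principal obstacle is the joint requirement (i)+(ii). The natural route to (i) uses a Haar- or Gaussian-distributed orthogonal projection on the Hilbert space $\Sc_2^N$ of rank $n-1$ together with Garnaev--Gluskin-type concentration; such a projection typically has $\Sc_q^N\to\Sc_q^N$ operator norm much larger than a constant when $q\neq 2$, so (ii) fails for a generic choice. Three natural lines of attack suggest themselves: (a) exhibit a \emph{structured} deterministic approximant, for instance via a Kashin-type decomposition of $\R^{N\times N}$ into subspaces on which $\|\cdot\|_{\Sc_2}$ and $\|\cdot\|_{\Sc_q}$ are equivalent, so that the associated orthogonal projection is automatically bounded in $\Sc_q$; (b) refine the random construction to control both the $\Sc_2^N\to\Sc_q^N$ approximation error and the $\Sc_q^N\to\Sc_q^N$ norm of the projection with positive probability, which would require new tail estimates for random projections measured in non-Hilbertian Schatten norms; or (c) replace (ii) by a factorization through $\Sc_2^N$ or a $2$-summing-norm bound compatible with complex interpolation, in the spirit of techniques already deployed elsewhere in this paper. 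Route (a) seems closest to the available machinery, but adapting non-commutative Kashin-type decompositions so that the resulting projection is simultaneously optimal in both senses appears to be precisely the point where the conjecture becomes hard.
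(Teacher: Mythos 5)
This statement is Conjecture~\ref{conj:HiMi} in the paper; the authors themselves pose it as \emph{open} and do not prove it. The best upper bound established in this range is Proposition~\ref{prop:approximation-triangle-1-upper-bd}, namely $a_n(\Sc_p^N\hookrightarrow\Sc_q^N)\lesssim_q N^{-1/2-1/p}\sqrt{N^2-n+1}$, which matches the conjectured rate only at $p=2$; the Hinrichs--Michels lower bound is what motivates the conjectured exponent. Consequently there is no ``paper's own proof'' to compare against, and a genuine proof of this statement would be a new theorem, not a reproduction of something already in the text.

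Your submission is a correct map of one route, not a proof, and to your credit you say as much. The interpolation bookkeeping ($1-\theta=(1/p-1/q)/(1/2-1/q)$, $[\Sc_2^N,\Sc_q^N]_\theta=\Sc_p^N$) is in order, and the single-operator bound $\|\id-T_n\|_{\Sc_p^N\to\Sc_q^N}\leq\|\id-T_n\|_{\Sc_2^N\to\Sc_q^N}^{1-\theta}\,\|\id-T_n\|_{\Sc_q^N\to\Sc_q^N}^{\theta}$ would indeed close the conjecture if one could exhibit a rank-$(n-1)$ operator $T_n$ satisfying your conditions (i) and (ii). But no such $T_n$ is produced, and your diagnosis of why the obvious candidate fails is right: the projection realizing the Carl--Defant bound is selected in the Hilbert space $\Sc_2^N$ and generically has $\Sc_q^N\to\Sc_q^N$ operator norm that grows polynomially in $N$ when $q>2$. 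Note that the paper's own workaround in the conjugate case $q=p^*$ (Step~1 of the proof of Proposition~\ref{prop:approximation-square-upper-bd-large-n}) bypasses interpolation entirely by writing $P=P^2=P^*$ and factoring through $\Sc_2^N$, using self-adjointness to trade a $\Sc_p\to\Sc_2$ bound for an $\Sc_2\to\Sc_{p^*}$ bound; that device is rigid and does not extend to $p\neq q^*$, which is precisely where the conjecture lives. Of your three candidate routes, (c) is the one most aligned with the machinery already in the paper (compare the $2$-summing alternative proof of Proposition~\ref{prop:Gelfand-missing-intermediate-regime}), but replacing the pointwise $\Sc_q\to\Sc_q$ norm bound on $T_n$ by an interpolation-compatible averaged bound is itself the hard open step, and nothing in the submission accomplishes it.
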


The second unsettled case for Gelfand numbers corresponds to the square region on the lower right-hand side of Figure~\ref{subfig:Gelfand}.

\begin{conjecture}
	\label{conj:Gelfand-quasi}
	If $ 0< p \leq  1$, $2\leq q \leq \infty$, and $n,N\in \N$ with $1\leq n \leq (1-\cHPV) N^2$, then
	\begin{equation*}
		c_n\big(\Sc_p^N \hookrightarrow \Sc_q^N\big)
		 \gtrsim_{p,q} \min\Big\{1,\frac{N}{n} \Big\}^{1/p-1/2}.
	\end{equation*}
\end{conjecture}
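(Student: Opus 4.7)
The plan is to split the conjecture into the Banach case $p=1$ and the genuinely quasi-Banach case $0<p<1$, treating the former by duality and the latter by a random-subspace argument.

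For $p=1$ and $2\leq q\leq \infty$, trace duality identifies $(\Sc_1^N)^* \cong \Sc_\infty^N$ and $(\Sc_q^N)^* \cong \Sc_{q^*}^N$ with $q^*:= q/(q-1)\in[1,2]$ (setting $q^* = 1$ when $q=\infty$). The adjoint of the identity $\Sc_1^N \hookrightarrow \Sc_q^N$ is therefore the identity $\Sc_{q^*}^N \hookrightarrow \Sc_\infty^N$, and the standard duality $c_n(T) = d_n(T^*)$ for operators between finite-dimensional Banach spaces yields
\[
c_n\big(\Sc_1^N \hookrightarrow \Sc_q^N\big) = d_n\big(\Sc_{q^*}^N \hookrightarrow \Sc_\infty^N\big).
\]
Since $1\leq q^* \leq 2 \leq \infty$, Theorem~\ref{thm: main-Kolmogorov}, item~\ref{item-Kolmogorov-square}, applied with $q^*$ in place of $p$ and $\infty$ in place of $q$, gives $d_n\big(\Sc_{q^*}^N \hookrightarrow \Sc_\infty^N\big)\asymp_q \min\{1, \sqrt{N/n}\}$ in the range $1\leq n\leq (1-\cHPV)N^2$, which is precisely $\min\{1,N/n\}^{1/p - 1/2}$ for $p=1$.

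For $0<p<1$ duality is unavailable, and we would try to implement a Gluskin-type random-subspace argument. The idea is to pick a random codimension-$(n-1)$ subspace $F = \ker G$ of $\Sc_p^N \cong \R^{N^2}$ (with $G$ an $(n-1)\times N^2$ Gaussian map relative to the Hilbert--Schmidt inner product) and, with positive probability, produce a matrix $A\in F$ with $\|A\|_{\Sc_q}/\|A\|_{\Sc_p} \gtrsim_{p,q} (N/n)^{1/p-1/2}$. The heuristic for the extremizer comes from block-diagonal models: among rank-$r$ matrices with nearly equal singular values one has $\|A\|_{\Sc_q}/\|A\|_{\Sc_p} = r^{1/q - 1/p}$, which matches the conjectural exponent when $r\asymp (n/N)^{(1/p-1/2)/(1/p-1/q)}$. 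The two ingredients would be (i) a transversality argument on the manifold of approximately rank-$r$ matrices (of dimension $\asymp rN$) to guarantee nontrivial intersection with $F$, and (ii) a non-commutative Khintchine / $\ell$-norm bound to control the Schatten norms of the extremizer thus produced.

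The main obstacle is precisely the quasi-convex nature of the unit ball of $\Sc_p^N$ for $p<1$: the usual Gluskin volumetric arguments and $2$-summing norm identities are no longer available, and any routine reduction to $p=1$ loses too much. Indeed, the trivial bound $\|A\|_{\Sc_p}\leq N^{1/p-1}\|A\|_{\Sc_1}$ combined with Case 1 only yields $c_n\gtrsim N^{3/2-1/p}\, n^{-1/2}$, which matches the conjecture only at $n\asymp N^2$ and badly underestimates it for moderate $n$. Closing the gap between the currently known lower bound $(N/n)^{1/p-1/q}$ from \cite[Theorem B]{HPV2020} and the conjectural $(N/n)^{1/p-1/2}$ in the regime $q>2$ is what makes the conjecture genuinely nontrivial, and will likely require a dedicated random-matrix argument tailored to the quasi-Banach setting.
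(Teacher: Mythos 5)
This statement is labelled as a \emph{Conjecture} in the paper; the authors do not offer a proof, and the conjecture remains open. Your proposal likewise does not prove it, as you yourself acknowledge near the end — so the right verdict is that there is a genuine gap, and it is exactly the one you identify.

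Two remarks on what you do establish. First, the $p=1$ case you treat by dualizing to $d_n\big(\Sc_{q^*}^N\hookrightarrow \Sc_\infty^N\big)$ and invoking Theorem~\ref{thm: main-Kolmogorov}, item~\ref{item-Kolmogorov-square}. This is correct, but it is not new: the $p=1$ instance of the conjectural lower bound is precisely Proposition~\ref{prop:Gelfand-cleaner-small-regime} of the paper specialized to $p=1$, since $\min\bigl\{1,N^{3/2-1/p}/n^{1/2}\bigr\}\big|_{p=1}=\min\bigl\{1,\sqrt{N/n}\bigr\}=\min\{1,N/n\}^{1/2}$. (Your duality route and the paper's route via Lemma~\ref{lem:GKS-Kolmogorov-lower-bound} amount to the same thing, because Theorem~\ref{thm: main-Kolmogorov}, item~\ref{item-Kolmogorov-square}, is itself derived by exactly this duality.) The conjecture is posed because its substance begins at $0<p<1$.

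Second, for $0<p<1$ your sketch correctly diagnoses why the known tools fail: duality, the $2$-summing-norm identity of Carl--Defant, and the Riesz-type Lemma~\ref{lem:GKS-FLM} all break or degrade in the quasi-Banach regime, and factorization through $\Sc_1^N$ only recovers $c_n\gtrsim N^{3/2-1/p}n^{-1/2}$, which equals the conjectured $(N/n)^{1/p-1/2}$ only when $n\asymp N^2$. But the proposed Gluskin-type random-subspace construction is left entirely heuristic: no transversality lemma is stated, no concentration estimate for Schatten quasi-norms of random matrices restricted to a random kernel is provided, and the passage from ``a matrix of approximate rank $r$ in the kernel'' to the claimed norm ratio is asserted rather than argued. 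Also note that the proposed extremizer rank $r\asymp (n/N)^{(1/p-1/2)/(1/p-1/q)}$ depends on $q$, whereas the conjectural rate $(N/n)^{1/p-1/2}$ does not, which suggests the correct witness should not be rank-constrained in a $q$-dependent way; this is a warning sign that the block-diagonal heuristic may be pointing in the wrong direction. In short, the proposal faithfully records the state of the problem but does not close the gap between the currently available lower bound $\min\{1,N/n\}^{1/p-1/q}$ of HPV2020 and the conjectured exponent $1/p-1/2$ when $q>2$ and $p<1$.
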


The final conjecture is about Kolmogorov numbers in the case when the codomain is a quasi-Banach space (region~\ref*{item-Kolmogorov-upper-triangle-q-quasi} in Figure~\ref{subfig:Kolmogorov}).

\begin{conjecture}
	\label{conj:Kolmogorov}
	If $ 0 < p \leq  \infty$, $0< q \leq 1$, $q\leq p$, and $n,N\in \N$ with $1\leq n \leq N^2$, then
	\begin{equation*}
		d_n\big(\Sc_p^N \hookrightarrow \Sc_q^N\big) \gtrsim_{p,q} \max\Bigl\{ 1, \frac{N^2-n+1}{N}\Bigr\}^{1/q-1/p}.
	\end{equation*}
\end{conjecture}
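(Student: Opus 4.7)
The principal obstacle is that for $q<1$ the codomain $\Sc_q^N$ is a genuine quasi-Banach space, so the duality $d_n(T)=c_n(T^*)$ underlying the Banach-space portion of Theorem~\ref{thm: main-Kolmogorov} breaks down. A naive substitute using the convex-hull identity $\operatorname{conv}(B_{\Sc_q^N})=B_{\Sc_1^N}$ only yields $d_n(\Sc_p^N\hookrightarrow\Sc_q^N)\geq d_n(\Sc_p^N\hookrightarrow\Sc_1^N)\asymp_p\max\{1,(N^2-n+1)/N\}^{1-1/p}$, which falls short of the conjecture by a factor $\max\{1,(N^2-n+1)/N\}^{1/q-1}$ reflecting exactly the non-convexity of $B_{\Sc_q^N}$. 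Hence a direct geometric argument is needed, and my plan is to treat three ranges of $n$ separately.

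In the small-$n$ regime $1\leq n\leq c(p,q)N^2$ the conjectured bound reduces to $d_n\gtrsim_{p,q}N^{1/q-1/p}$, which follows from the partial lower bound already in item~\ref{item-Kolmogorov-upper-triangle-q-quasi} of Theorem~\ref{thm: main-Kolmogorov} (after rescaling $2N\leadsto N$) together with the monotonicity $d_n\geq d_{n'}$ for $n\leq n'$. In the large-$n$ regime $n\geq N^2-N+1$, where the target is $d_n\gtrsim 1$, I would exhibit for every $E\subset\Sc_q^N$ with $\dim E<n$ a rank-one matrix $uv^T$ with $\|u\|_2=\|v\|_2=1$ (hence $\|uv^T\|_{\Sc_p}=\|uv^T\|_{\Sc_q}=1$) whose $\Sc_q^N$-distance to $E$ is uniformly bounded below. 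A first-moment argument averaging against the Haar measure on $O(N)\times O(N)$, combined with a Paley--Zygmund second-moment estimate, should produce the required separation since the $(2N-1)$-dimensional manifold of rank-one matrices cannot be swallowed by a subspace of codimension at least $N$.

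The heart of the conjecture lies in the intermediate regime $c(p,q)N^2\leq n\leq N^2-N+1$. Writing $k:=N^2-n+1$, the task is to prove $d_n\gtrsim_{p,q}(k/N)^{1/q-1/p}$. The strategy is to construct an $n$-dimensional subspace $F\subset\Sc_p^N$ whose nonzero elements all have effective rank at least $\asymp k/N$ with singular values of comparable magnitude, so that
\[
\|A\|_{\Sc_q}\gtrsim_{p,q}(k/N)^{1/q-1/p}\,\|A\|_{\Sc_p}\qquad\text{for every }A\in F\setminus\{0\}.
\]
A dimension count is suggestive: the variety of $N\times N$ matrices of rank at most $r$ has dimension $r(2N-r)$, so a generic $n$-dimensional subspace contains no nonzero matrix of rank below $\asymp k/N$. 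A natural concrete candidate is the span of $n$ independent Gaussian (or Haar-orthogonally rotated) matrices, whose singular-value distributions are well-controlled by random matrix theory. Once such an $F$ is produced, the desired lower bound follows from a Bernstein-style argument via the inequality $b_n(T)\leq d_n(T)$, which I expect to persist up to a multiplicative constant in the quasi-Banach setting at hand.

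The main obstacle I foresee is upgrading the algebraic lower bound on rank to a robust lower bound on the norm ratio: even if $A\in F$ has rank at least $k/N$, its singular values could concentrate onto one direction, collapsing $\|A\|_{\Sc_q}/\|A\|_{\Sc_p}$ to a universal constant. Controlling this requires a net argument on the unit sphere of $F$ together with a uniform concentration estimate for $\|\cdot\|_{\Sc_q}$, and in the quasi-Banach regime $q<1$ every application of the quasi-triangle inequality introduces a factor of order $2^{1/q}$, so the mesh must be chosen delicately. A plausible route is to work with a net on the Grassmannian of matrices of rank $\asymp k/N$ rather than on $F$ itself, combining the entropy estimates for Schatten balls from~\cite{HPV2017} with the key Lemma~2.5 of~\cite{HPV2020}.
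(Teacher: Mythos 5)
This statement is Conjecture~\ref{conj:Kolmogorov}. The paper does not prove it; it is explicitly placed in the subsection ``Missing cases and conjectures'' as an open problem. The only rigorously established facts in the range $0<q<1$, $q\leq p$ are the matching upper bound $d_n(\Sc_p^N\hookrightarrow\Sc_q^N)\lesssim_{p,q}\max\{1,(N^2-n+1)/N\}^{1/q-1/p}$ from item~\ref{item-Kolmogorov-upper-triangle-q-quasi} and the single-index lower bound $d_{\lfloor c(p,q)N^2\rfloor+1}(\Sc_p^{2N}\hookrightarrow\Sc_q^{2N})\gtrsim_{p,q}N^{1/q-1/p}$ of Proposition~\ref{prop:kolmogorov-upper-triangle}, obtained via Carl's inequality for quasi-Banach spaces and the entropy-number asymptotics of~\cite{HPV2017}. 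So there is no proof in the paper to compare your proposal against.

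As a candidate proof of the conjecture, your sketch does not close the gaps it identifies. The small-$n$ regime is essentially fine: Proposition~\ref{prop:kolmogorov-upper-triangle} gives $d_{\lfloor c(p,q)N^2\rfloor+1}(\Sc_p^{2N}\hookrightarrow\Sc_q^{2N})\gtrsim_{p,q}N^{1/q-1/p}$, and monotonicity of $d_n$ in $n$ extends this downward. The large-$n$ and intermediate-$n$ arguments are, however, speculation. In the intermediate regime you want an $n$-dimensional subspace $F\subset\Sc_p^N$ on which $\|A\|_{\Sc_q}\gtrsim_{p,q}(k/N)^{1/q-1/p}\|A\|_{\Sc_p}$, and you yourself correctly observe that a lower bound on rank does not yield a lower bound on this norm ratio (a matrix of rank $r$ may still have $\|A\|_{\Sc_q}\asymp\|A\|_{\Sc_p}$ if one singular value dominates), so neither the generic-subspace dimension count nor the Gaussian model produces the claim without substantive new input. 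Finally, the invocation of $b_n(T)\leq d_n(T)$ is problematic in the quasi-Banach codomain: the classical proof of that inequality relies on a Borsuk--Ulam argument applied to a convex neighbourhood in $Y$, and for $0<q<1$ the relevant sets are not convex, so the inequality would need to be reproved (possibly with deteriorating constants) before it could be used here. In short: your plan is a reasonable menu of attack strategies, but it is not a proof, and no proof exists in the paper either.
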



\subsection{Relation to widths and recovery problems}
The intimate relation between compressed sensing and geometric functional analysis, in particular asymptotic geometric analysis, is well understood and both fields have gained from cross-fertilization (see, e.g., \cite{CGLP2012}). On the non-commutative front it is nicely demonstrated in \cite{CDK2015} how the relation between the geometry of Schatten classes and the notion of Gelfand numbers (and widths) and low-rank matrix recovery goes beyond nuclear norm minimization. The Gelfand widths, which in our setting of identity mappings coincide with the Gelfand numbers up to a shift in the index, are equivalent to the worst-case recovery error under optimal information and recovery schemes (see \cite[Lemma 2.5.3]{CGLP2012} and \cite[Theorem 5.5]{CDK2015}). More precisely, assume we wish to recover a matrix $X$ from the unit ball $B_p^N$ of the Schatten class $\Sc_p^N$ from $m$ linear measurements (pieces of information) of $X$ provided by
the information mapping $\mathcal A:\R^{N\times N}\to\R^m$,
\[
{\mathcal A}(X):=\big(\langle A_1,X\rangle,\dots,\langle A_m,X\rangle\big)\in\R^m.
\]
The pursuit for the optimal pair of information mapping ${\mathcal A}:\R^{N\times N}\to \R^m$ and recovery mapping $\Delta:\R^m\to \R^{N\times N}$
is expressed in terms of the quantity
\[
E_m\big(B_p^N,\Sc_q^N\big):=\inf_{\substack{{\mathcal A}\,:\,\R^{N\times N}\to \R^m\\ \Delta\,:\,\R^m\to \R^{N\times N}}}\,\sup_{X\in B_p^N}\|X-\Delta({\mathcal A}(X))\|_{\Sc_q},
\]
measuring the worst-case error of the optimal information--recovery scheme. Using their already mentioned relation to Gelfand widths/ numbers (see \cite[Section 2.5]{CGLP2012} and \cite[Section 10]{FR2013} for details), one obtains that whenever $0<p\le 1$ and $p<q\le 2$ and $1\le m\le N^2$, 
\[
E_m\big(B_p^N,\Sc_q^N\big)\asymp_{p,q}\min\Bigl\{1,\frac{N}{m}\Bigr\}^{1/p-1/q}.
\]
We refer the reader to \cite[Theorem 5.5]{CDK2015} for details and to \cite{HPV2017} for an alternative proof of the lower bound via entropy numbers and Carl's inequality, which was obtained there in the extended regime $0<p\le q\le \infty$. For general background on compressed sensing, we refer the reader to \cite{CGLP2012,DDEYK2012,FR2013}.

\subsection{Organization of the article}

The remainder of this paper is organized as follows.
In Section~\ref{sec:prelim} we present the preliminaries (including notation,  basic  notions  and  background  on  $s$-numbers and  Schatten classes) and recall a variety of results needed in the proofs of Theorems~\ref{thm:main-Gelfand}, \ref{thm: main-approximation}, \ref{thm: main-Kolmogorov}. In Section~\ref{sec:Gelfand} we prove the estimates for Gelfand numbers from Theorem~\ref{thm:main-Gelfand} (and give an elementary proof of results of Ch\'avez-Dom\'inguez and Kutzarova~\cite{CDK2015}). The rest of the paper contains the proofs of the results for approximation and Kolmogorov numbers from Theorems~\ref{thm: main-approximation} and \ref{thm: main-Kolmogorov}, respectively.
In Section~\ref{sec:approximation-upper-triangle} we consider the case $0<q\leq p \leq \infty$. In Section~\ref{sec:approximation-square} we give the proofs of the results for approximation numbers in the case $1\leq p \leq 2\leq q \leq \infty$, while Section~\ref{sec:approximation-two-triangles} covers the cases $1\leq p \leq 2$ and $2\leq p \leq q \leq \infty$. Finally, in Section~\ref{sec:approximation-p-quasi} we prove the estimates for approximation and Kolmogorov numbers in the case $0< p <1$, i.e., when the domain space is a quasi-Banach space.

\section{Preliminaries \& mathematical machinery} \label{sec:prelim}

\subsection{Notation}

For $0< p\leq \infty$, we denote by $\ell_p^N$ the space $\R^N$ equipped with the (quasi-)norm 
\[
\big\|(x_i)_{i=1}^N\big\|_p :=
\begin{cases}
\Big(\sum\limits_{i=1}^N|x_i|^p\Big)^{1/p} & :\  0< p < \infty,\\
\max\limits_{1\leq i \leq N}|x_i| 
& :\  p=\infty.
\end{cases}
\]

Given two quasi-Banach spaces $X$ and $Y$, we shall denote by $\mathscr L(X,Y)$ the space of bounded linear operators between $X$ and $Y$ equipped with the operator quasi-norm. The closed unit ball of a Banach space $X$ will be denoted by $B_X$. For a set $A\subset\R^n$, we denote by $\conv(A)$ the convex hull of $A$, i.e.,
\[
\conv(A):=\Bigg\{ \sum_{i=1}^m \lambda_i x_i\,:\, m\in\N,\,x_1\dots,x_m\in\R^n, \lambda_1,\dots,\lambda_m \geq 0,\,\,\text{and}\,\,\sum_{i=1}^m\lambda_i=1\Bigg \}.
\] 
For sequences $(a_n)_{n\in\N}$ and $(b_n)_{n\in\N}$ of positive real numbers, we shall write $a_n \lesssim b_n$ if there is an absolute constant $c\in(0,\infty)$ such that $a_n \leq c b_n$ for all $n\in\N$. Similarly, we define $a_n \gtrsim b_n$ and write $a_n \asymp b_n$ if both $a_n\lesssim b_n$ and $a_n \gtrsim b_n$. If the constants depend on some parameter $r$, then we express this by writing $\lesssim_r$, $\gtrsim_r$, or $\asymp_r$.

\subsection{The $s$-numbers and their properties}

An axiomatic approach to the study of $s$-numbers goes back to Pietsch \cite{Pietsch2} and several (quantitative) aspects concerning those numbers and their relations to one another can also be found in the monographs \cite{CS1990} and \cite{K1986}.
Let $X,Y, X_0, Y_0$ be quasi-Banach spaces 
and let $p\in(0,1]$ such that $Y$ is a $p$-Banach space.
 A map $s$ which assigns to any bounded linear operator $T$ between two quasi-Banach spaces 
 a sequence $(s_n(T))_{n\in\N}$
 is called an $s$-function, if:
\begin{enumerate}
\item[($S_1$)] $\|T\| = s_1(T) \geq s_2(T) \geq \dots \geq 0$ for all $T\in\mathscr L(X,Y)$;
\item[($S_2$)] $s_{m+n-1}^p(S+T) \leq s_m^p(S) + s_n^p(T)$ for all $S,T\in \mathscr L(X,Y)$ and $m,n\in\N$;
\item[($S_3$)] $s_n(STU) \leq \|S\|s_n(T)\|U\|$ for all $U\in \mathscr L(X_0,X)$, $T\in\mathscr L(X,Y)$, $U\in\mathscr L(Y,Y_0)$, and $n\in\N$;
\item[($S_4$)] $s_n(T) = 0$ for all $T\in \mathscr L(X,Y)$ and $n\in\N$ with $\rank T <n$;
\item[($S_5$)] $s_n(\id\colon \ell_2^n \to \ell_2^n) =1$ for all $n\in \N$.
\end{enumerate}
Note that $p=1$ in ($S_2$) if $Y$ is a Banach space.
 We call $s_n(T)$ the $n-$th $s$-number of $T$.
 An $s$-function is called multiplicative if
 \[
 s_{m+n-1}(ST) \leq s_m(S) s_n(T) \quad  \text{ for all } T\in \mathscr L(X,Y),\ S\in \mathscr L(Y,Y_0), \text{ and } m,n\in\N.
 \]
 
The sequences $(a_n)_{n\in\N},(c_n)_{n\in\N}$, and $(d_n)_{n\in\N}$ of approximation, Gelfand, and Kolmogorov numbers defined in Section \ref{sec:agk numbers for schatten} are all sequences of multiplicative
 $s$-numbers (this follows
directly from their definitions, cf.~\cite{MR2432104}).
 In particular, for all $n\in\N$, we have the relations
\[
a_n(T) \geq c_n(T) \quad\text{and}\quad a_n(T)\geq d_n(T)
\] 
with equality for operators $T$ between Hilbert spaces; in this Hilbert space setting and for compact $T$, it is a consequence of the spectral theorem that those $s$-numbers coincide with the sequence of singular values of $T$.

Moreover, if $T$ is a compact operator between Banach spaces, then we have the duality relations $a_n(T)=a_n(T^*)$ and $c_n(T)=d_n(T^*)$ for all $n\in\N$, where $T^*$ denotes the dual operator.

If $T$ is an isomorphism between $m$-dimensional spaces, then $c_m(T)=1/\|T^{-1}\|$.
We also have the following simple lemma.

\begin{lemma}
	\label{lem:approximation-Kolmogorov-id-1}
Let $X$ be an $m$-dimensional quasi-Banach space 
and let $\id_X$ denote the identity operator from $X$ to $X$.
Then $a_n(\id_X) = c_n(\id_X)  = d_n(\id_X) = 1$ for $n\in\N$ with $n\leq m$.
\end{lemma}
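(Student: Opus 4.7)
The plan is to establish all three equalities by separately proving the upper bound (which is uniform and follows from the axioms) and the lower bound (which uses that $n \leq m$ forces the relevant subspace or rank constraint to leave room for a unit vector).

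First I would note that $\|\id_X\|=1$, so axiom $(S_1)$ immediately gives $a_n(\id_X) \leq 1$, and from the standard relations $c_n(T) \leq a_n(T)$ and $d_n(T) \leq a_n(T)$ (valid for any $s$-number satisfying the Pietsch axioms, cf.\ the discussion preceding the lemma), we obtain $c_n(\id_X) \leq 1$ and $d_n(\id_X) \leq 1$ as well. It therefore suffices to produce matching lower bounds for $c_n(\id_X)$ and $d_n(\id_X)$; the lower bound $a_n(\id_X) \geq 1$ will then follow, say, from $a_n \geq c_n$.

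For the Gelfand number, fix a subspace $F \subset X$ with $\codim F < n \leq m$. Then $\dim F \geq m - (n-1) \geq 1$, so $F$ contains some $x$ with $\|x\|_X = 1$, giving $\|\id_X|_F\| \geq 1$. Taking the infimum over all admissible $F$ yields $c_n(\id_X) \geq 1$.

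For the Kolmogorov number, fix $E \subset X$ with $\dim E < n \leq m$, so $E$ is a proper subspace of $X$. The task is to show $\|Q_E^X\| \geq 1$, where $Q_E^X \colon X \to X/E$ is the quotient map equipped with the quotient quasi-norm $\|x+E\|_{X/E} := \inf_{y \in E}\|x-y\|_X$. I would argue directly: pick any $\xi \in X/E$ with $\|\xi\|_{X/E} = 1$ (possible since $\dim(X/E) \geq 1$), and for $\varepsilon > 0$ choose a representative $x \in X$ with $Q_E^X x = \xi$ and $\|x\|_X \leq 1 + \varepsilon$ (such $x$ exists by definition of the infimum). Then
\[
\|Q_E^X\| \;\geq\; \frac{\|Q_E^X x\|_{X/E}}{\|x\|_X} \;\geq\; \frac{1}{1+\varepsilon},
\]
and letting $\varepsilon \to 0$ gives $\|Q_E^X \id_X\| = \|Q_E^X\| \geq 1$; taking the infimum over $E$ yields $d_n(\id_X) \geq 1$.

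The main subtlety, and the only place where one has to think beyond the Banach setting, is the Kolmogorov step: one might be tempted to quote Riesz's lemma, but the clean quasi-Banach argument above avoids any appeal to it, using only the definition of the quotient quasi-norm and the finite-dimensionality of $X/E$. Everything else is a direct application of axioms $(S_1)$ and the standard inequalities between $a_n$, $c_n$, and $d_n$.
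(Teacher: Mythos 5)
Your proof is correct and follows essentially the same plan as the paper's: derive the upper bounds and the Gelfand lower bound from the $s$-number axioms and basic properties, and reduce the Kolmogorov lower bound to showing that the quotient map $Q_E^X$ has norm at least $1$ whenever $E$ is a proper subspace. The one genuine point of departure is in the Kolmogorov step: the paper first states and proves a quasi-normed version of Riesz's lemma as a standalone lemma and then applies it, whereas you establish $\|Q_E^X\| \geq 1$ directly from the definition of the quotient quasi-norm by selecting a representative of norm at most $1+\varepsilon$ for a unit coset. These are two phrasings of the same fact --- normalizing your $x$ to $x_1 := x/\|x\|_X$ recovers exactly the vector Riesz's lemma produces --- but your version is cleaner in that it packs the Riesz argument inline and skips the auxiliary lemma entirely. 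Your Gelfand lower bound is also a touch more elementary than the paper's, which appeals to the identity $c_m(T) = 1/\|T^{-1}\|$ for isomorphisms between $m$-dimensional spaces and then uses monotonicity $c_n \geq c_m$; you instead just observe that any $F$ with $\codim F < n \leq m$ is nonzero and hence contains a unit vector. Both routes are sound; yours uses fewer external facts.
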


\begin{proof}
By the aforementioned properties,
\[
1 = a_1(\id_X) \geq a_n(\id_X) \geq  c_n(\id_X) \geq c_m(\id_X)  =1.
\]
Thus it remains to prove the statement for the Kolmogorov numbers
(and, by duality, we may focus on the case when $X$ is a quasi-Banach space).

Take any $\varepsilon>0$ and any subspace $E\subset X$ with $\dim E < m$.
Then $E\neq X$ and by the Riesz lemma (see below) there exists $x_1\in X\setminus E$, such that $\|x_1\| = 1$ and $\|x_1 - y\| \geq 1/(1+\varepsilon)$ for all $y\in E$.
Thus
\[
\sup_{x\in B_X}\inf_{y\in E}\|x-y\| \geq \inf_{y\in E} \|x_1 - y\| \geq \frac{1}{1+\varepsilon}.
\]
Since $E$ and $\varepsilon>0$ were arbitrary it follows that $d_n(\id_X) \geq 1$.
\end{proof}

For completeness we include a short proof of the Riesz lemma for quasi-normed spaces.

\begin{lemma}[Riesz's lemma]
	Let $X$ be a quasi-normed space and $E$ a proper closed subspace of $X$.
	For every $\varepsilon>0$ there exists $x_1\in X\setminus E$,
	such that $\|x_1\| = 1$ and $\|x_1 - y\| \geq 1/(1+\varepsilon)$ for all $y\in E$.
\end{lemma}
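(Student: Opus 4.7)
The plan is to imitate the classical proof of Riesz's lemma verbatim: the argument only uses positive homogeneity of the (quasi-)norm and the fact that a closed proper subspace has positive distance to points outside it, and it never invokes the triangle inequality. So no adjustment for the quasi-Banach constant $K$ will be needed.

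First, since $E$ is a proper subspace of $X$, pick any $x \in X \setminus E$ and set $d := \inf_{y \in E}\|x-y\|$. I would argue that $d>0$: if $d=0$, one could find $y_n \in E$ with $\|x-y_n\| \to 0$, i.e.\ $y_n \to x$ in the quasi-norm topology, and closedness of $E$ would force $x \in E$, a contradiction. Given $\varepsilon>0$, the definition of infimum then provides $y_0 \in E$ with
\[
d \leq \|x-y_0\| \leq d(1+\varepsilon).
\]

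Next, I would define
\[
x_1 := \frac{x-y_0}{\|x-y_0\|},
\]
so $\|x_1\|=1$ by positive homogeneity, and clearly $x_1 \notin E$ since $x \notin E$. For the key lower bound, fix $y \in E$. Then $y_0 + \|x-y_0\|\,y$ belongs to $E$, hence
\[
\bigl\|\,x - \bigl(y_0 + \|x-y_0\|\,y\bigr)\,\bigr\| \geq d.
\]
Dividing both sides by $\|x-y_0\|$ and using positive homogeneity gives
\[
\|x_1 - y\| = \frac{\bigl\|\,x - y_0 - \|x-y_0\|\,y\,\bigr\|}{\|x-y_0\|} \geq \frac{d}{\|x-y_0\|} \geq \frac{d}{d(1+\varepsilon)} = \frac{1}{1+\varepsilon},
\]
which is the desired inequality.

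There is essentially no obstacle here; the only point that requires a moment of care is to confirm that the quasi-norm topology is Hausdorff enough for a closed proper subspace to be at positive distance from an exterior point. This is standard: the sets $\{y:\|y-x\|<r\}$ form a neighborhood basis at $x$, and closedness of $E$ gives some such ball disjoint from $E$, whence $d>0$. Everything else is just positive homogeneity and the definition of the infimum, so the classical proof transfers without modification.
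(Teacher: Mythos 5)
Your proof is correct and is essentially identical to the paper's: both pick an exterior point, choose a nearly optimal approximant from $E$, normalize, and observe that only positive homogeneity (not the triangle inequality) is needed to deduce $\inf_{y\in E}\|x_1-y\| = \inf_{z\in E}\|x_0-z\|/\|x_0-y_0\| \geq 1/(1+\varepsilon)$. The only difference is that you spell out why the distance $d$ is positive, which the paper asserts without comment.
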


\begin{proof}	
	Take any $x_0\in X\setminus E$.
	Denote $R\coloneqq \inf_{y\in E} \|x_0 -y\| >0$ and let $y_0\in E$ be such that $\|x_0-y_0\| \leq (1+\varepsilon)R$.
	If we define $x_1 \coloneqq \frac{x_0-y_0}{\|x_0-y_0\|}$, then $x_1\notin E$, $\|x_1\|=1$, and
	\begin{align*}
		\inf_{y\in E} \|x_1 - y \| = \frac{1}{\|x_0-y_0\|}	\inf_{y\in E} \bigl\|x_0-y_0 - \|x_0-y_0\|y \bigr\| = \frac{1}{\|x_0-y_0\|}	\inf_{z\in E} \|x_0-z\| \geq \frac{1}{1+\varepsilon}.
	\end{align*}	
This ends the proof.
		\end{proof}

\subsection{Some general estimates for sequences of $s$-numbers}

We will need some (probabilistic) estimates for and relations between $s$-numbers, which we shall collect here. Below  $d(\cdot,\cdot)$ stands for the Banach--Mazur distance, i.e., if $X$ and $Y$ are isomorphic quasi-Banach spaces, then
\[
d(X,Y):= \inf\big\{\|T\|\|T^{-1}\|\,:\, T\in\mathscr L(X,Y)\,\,\text{isomorphism} \big \}.
\]
The first result concerns lower bounds for the sequence of Gelfand numbers.

\begin{lemma}[{{\cite[Propositon~2.4]{GKS1987}}}]
	\label{lem:GKS-FLM}
	Let $m\in\N$ and suppose that $X$, $Y$ are $m$-dimensional quasi-Banach spaces, $T\in \mathscr{L}(X,Y)$ is an invertible linear operator, 
	and that $T^{-1}:Y\to X$ allows for a decomposition $T^{-1} = BA $ for some $A\in \mathscr L(Y,\ell_2^m)$ and $B\in \mathscr L(\ell_2^m,X)$. 
	Then, for all $n\leq m$,
	\[
	c_{n}\big(T\colon X \to Y\big) 
	\geq 
	\frac{1}{\|A\| \|B\|} \inf\Big\{ d(E, \ell_2^{m-n+1})\, :\, E\subset Y,\ \dim E = m-n+1\Big\}.
	\]
\end{lemma}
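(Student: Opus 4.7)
The plan is to estimate $\|T|_F\|$ from below for each subspace $F\subset X$ with $\codim F < n$, and then take the infimum. Since $\codim F<n$ implies $\dim F\geq m-n+1$, and since $\|T|_{F_0}\| \leq \|T|_F\|$ for any subspace $F_0\subset F$, we may assume without loss of generality that $\dim F = m-n+1$ exactly. The key idea is to use the factorization $T^{-1}=BA$ to realize an appropriate subspace of $Y$ as ``close'' to Euclidean.

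First I would set $E := T(F) \subset Y$. Since $T$ is invertible, $\dim E = m-n+1$. Next, I would consider the restricted map $A|_E\colon E\to \ell_2^m$. Injectivity of $A|_E$ follows from the fact that $T^{-1}=BA$ is injective: if $Ay=0$ for $y\in E$, then $T^{-1}y=BAy=0$, hence $y=0$. Therefore $E':=A(E)$ is a subspace of $\ell_2^m$ of dimension $m-n+1$, and as a subspace of a Hilbert space it is isometric to $\ell_2^{m-n+1}$.

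The main calculation is then to control the two operator norms of the isomorphism $A|_E\colon E\to E'$. The upper direction is immediate: $\|A|_E\|\leq \|A\|$. For the inverse direction, I would argue as follows. For $z\in E'$, write $z=Ay$ with $y\in E=T(F)$, say $y=Tx$ with $x\in F$. Then
\[
(A|_E)^{-1}z = y = Tx = TBAx \cdot \text{(something)}
\]
wait, more directly: $Bz=BAy=T^{-1}y=T^{-1}Tx=x\in F$, so $TBz=Tx=y=(A|_E)^{-1}z$, i.e.\ $(A|_E)^{-1}=TB|_{E'}$ with the crucial feature that $B$ maps $E'$ into $F$. Hence
\[
\|(A|_E)^{-1}z\|_Y = \|TBz\|_Y \leq \|T|_F\|\cdot \|Bz\|_X \leq \|T|_F\|\cdot \|B\|\cdot \|z\|_{\ell_2^m}.
\]

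Combining the two bounds with the definition of the Banach--Mazur distance yields
\[
d\bigl(E,\ell_2^{m-n+1}\bigr) = d(E,E') \leq \|A|_E\|\cdot \|(A|_E)^{-1}\| \leq \|A\|\cdot \|B\|\cdot \|T|_F\|.
\]
Rearranging gives $\|T|_F\| \geq \frac{1}{\|A\|\|B\|}d(E,\ell_2^{m-n+1})$, which is bounded below by $\frac{1}{\|A\|\|B\|}$ times the infimum of $d(\,\cdot\,,\ell_2^{m-n+1})$ over all $(m-n+1)$-dimensional subspaces of $Y$. Taking the infimum over $F$ completes the proof.

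There is no real obstacle here; the argument is essentially a bookkeeping exercise about where each vector lives. The one subtlety is verifying that $Bz\in F$ (so that $\|T|_F\|$ rather than $\|T\|$ can be used), which is where the factorization $T^{-1}=BA$ is used in an essential way.
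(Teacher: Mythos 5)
Your proof is correct and follows essentially the same route as the paper's: both exploit the factorization $T^{-1}=BA$ to show that $d\bigl(T(F),\ell_2^{m-n+1}\bigr)\leq\|A\|\,\|B\|\,\|T|_F\|$ for any $(m-n+1)$-dimensional $F\subset X$, with your computation $(A|_{T(F)})^{-1}=TB|_{A(T(F))}$ being exactly the key step the paper hides inside a proof-by-contradiction via the auxiliary parameter $\lambda$. Your direct bookkeeping arrangement is a cleaner presentation of the identical idea, not a different argument.
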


To make it clear that the above lemma can also be used in the quasi-Banach setting (and thus for the sake of completeness) we provide a proof below.

\begin{figure}[h] 
\centering
	\begin{tikzcd}
		& \ell_2^m \arrow[dl,swap,"B"] \\
		F_0\subset X && Y \supset T(F_0)  \arrow[ll,swap,"T^{-1}"] \arrow[ul,swap,"A"] 
	\end{tikzcd}
    \caption{Decomposition $T^{-1} = BA$.}
	\label{fig:fact-1} 
\end{figure}
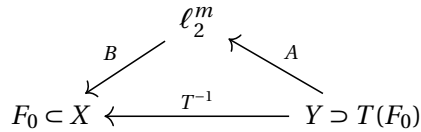

\begin{proof}[Proof of Lemma~\ref{lem:GKS-FLM}]
        Let $\lambda >0$ be such that
        \[
        \lambda \|A \| = \inf\big\{ d(E, \ell_2^{m-n+1}) \,:\, E\subset Y,\ \dim E = m-n+1 \big\}.
        \]
Let $F\subset X$ be any subspace of $X$ with $\codim F < n$, i.e., $\dim F \geq m-n +1$.
        Let $F_0 \subset F$ be any subspace of $X$ with $\dim F_0 = m-n+1$.
        We claim that there exists a vector $x_0\in F_0$ such that
        \[
        \|B^{-1} x_0 \| = 1 \quad \text{ and } \quad  \|T x_0\| \geq \lambda.
        \]
        Indeed, otherwise  for all $x\in F_0$ such that $\|B^{-1} x \| = 1$ we would have $ \|T x\| < \lambda$, and consequently, by compactness,

        \[
        \| A^{-1} |_{B^{-1}(F_0)} \| < \lambda.
        \]
        From this, we obtain 
        \begin{align*}
                \inf\big\{ d(E, \ell_2^{m-n+1}) : E\subset Y,\ \dim E = m-n+1\big\} 
                 \leq d\big(T(F_0), \ell_2^{m-n+1}\big)
                 \leq \| A^{-1} |_{B^{-1}(F_0)} \| \|A |_{T(F_0)}\| < \lambda \|A\|,
        \end{align*}
        which contradicts the definition of $\lambda$. Therefore, by our choice of $x_0\in F_0$ and the definition of $\lambda$,
        \begin{align*}
                \| T |_F \|
                &\geq \| T |_{F_0}\|  \geq \frac{\| T x_0\|}{ \|x_0\|} \geq \frac{\lambda}{ \|B\| \|B^{-1} x_0\|} \\
                &= \frac{\inf\big\{ d(E, \ell_2^{m-n+1}) : E\subset Y,\ \dim E = m-n+1\big\}} {\|A \| \|B\|}.
        \end{align*}
        The result follows by taking the infimum over all subspaces $F\subset X$ with $\codim F < n$.
\end{proof}

The next result provides a lower bound on the Kolmogorov numbers.

\begin{lemma}[{{\cite[Propositon~2.3]{GKS1987}}}]
	\label{lem:GKS-Kolmogorov-lower-bound}
	Let $m\in\N$ and suppose that $X$, $Y$ are $m$-dimensional Banach spaces and $T\in\mathscr{L}(X,Y)$.
	Moreover, consider linearly independent $y_1, \ldots, y_m\in B_Y$ and $I\in\mathscr L (Y,\ell^m_2)$ such that $I(y_i) = e_i$ for $i=1,\ldots, m$.
	Then, for all $n\leq m$,
	\[
	d_{n}\big(T\colon X \to Y\big) 
	\geq 
	\frac{m - \sqrt{(n-1)m}}{\sqrt{(n-1)m} \|I\colon Y\to \ell^m_2 \| + \sum_{i=1}^m \| I^*(e_i^*)\|}.
	\]
\end{lemma}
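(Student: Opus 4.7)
My proof plan is the classical one for Kolmogorov number lower bounds: I fix an arbitrary subspace $E\subset Y$ with $\dim E\le n-1$, transfer the distance computation into $\ell_2^m$ via the isomorphism $I$, and produce a witness vector in (a normalization of) $T(B_X)$ that is uniformly far from $E$ in $Y$-norm. Throughout, I will read the hypothesis ``$y_1,\dots,y_m \in B_Y$'' as providing vectors achievable by $T$ on the unit ball (the statement is only informative when the $y_i$ lie in $T(B_X)$, so fix $x_i\in B_X$ with $Tx_i=y_i$).

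The first step is to push $E$ through $I$: set $F:=I(E)\subset\ell_2^m$, so that $\dim F\le n-1$, and let $P$ denote the orthogonal projection of $\ell_2^m$ onto $F^\perp$. The trace--rank identity then yields
\[
\sum_{i=1}^m \|Pe_i\|_2^2 \;=\; \operatorname{tr}(P) \;=\; m-\dim F \;\ge\; m-n+1,
\]
and, since $I(e)\in F$ for $e\in E$, every $y\in Y$ satisfies
\[
\operatorname{dist}_Y(y,E) \;\ge\; \frac{\|I(y)-I(e)\|_2}{\|I\|} \;\ge\; \frac{\|PI(y)\|_2}{\|I\|}.
\]
This reduces the Kolmogorov estimate to an $\ell_2^m$-projection problem in the range of $P$.

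The second step is the combinatorial choice of a test vector of the form $y_\alpha:=\sum_{i=1}^m\alpha_iy_i$, for which $I(y_\alpha)=\alpha$ and thus $\operatorname{dist}_Y(y_\alpha,E)\ge \|P\alpha\|_2/\|I\|$. By linearity $y_\alpha = T\bigl(\sum_i \alpha_i x_i\bigr)$, so the relevant renormalization depends on $\|\sum_i\alpha_ix_i\|_X$. The two competing requirements are (a) $\|P\alpha\|_2$ large, controlled by the trace inequality above, and (b) $\|\sum_i\alpha_i x_i\|_X$ small, which I will estimate by duality: for any $\phi\in B_{Y^*}$, writing $\phi(y_i)=\langle(I^{-1})^*\phi,e_i\rangle$ gives $|\phi(y_i)|\le \|I^*(e_i^*)\|_{Y^*}$, so a telescoping bound produces the term $\sum_i\|I^*(e_i^*)\|_{Y^*}$ in the denominator (while the $\sqrt{(n-1)m}\,\|I\|$ term comes from the $\ell_2^m$ comparison).

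The third step is to balance the two terms. The key combinatorial lemma is that for an appropriate choice of $\alpha$ whose ``effective support'' has cardinality $k\sim\sqrt{(n-1)m}$ (for instance, $\alpha$ obtained by thresholding/selecting indices $i$ with large $\|Pe_i\|_2$, or via a random-signs argument combined with the trace inequality), one has $\|P\alpha\|_2\gtrsim m-\sqrt{(n-1)m}$ after normalization; meanwhile the $Y$-norm (equivalently the $X$-norm of the preimage) is controlled by a convex combination of $\sqrt{(n-1)m}\|I\|$ and $\sum_i\|I^*(e_i^*)\|_{Y^*}$. Plugging this into the distance estimate from Step 1 yields the claimed lower bound.

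The main obstacle is the precise optimization in Step 3: realizing that the numerator $m-\sqrt{(n-1)m}$ arises naturally from a careful balancing between the trace inequality $\sum_i\|Pe_i\|_2^2\ge m-n+1$ and the cardinality $k\sim\sqrt{(n-1)m}$ of the selected index set, and that the hybrid denominator reflects two distinct mechanisms for estimating $\|y_\alpha\|_Y$, namely comparison with $\ell_2^m$ via $\|I\|$ and direct coordinate-wise duality via the functionals $I^*(e_i^*)$. Once this selection is made, the remainder of the argument is a straightforward combination of the inequalities gathered above.
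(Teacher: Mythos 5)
The paper does not actually prove this lemma --- it is imported verbatim from Gordon--K\"onig--Sch\"utt \cite[Proposition~2.3]{GKS1987} and cited without proof --- so there is no internal argument to compare against; I will therefore judge the attempt on its own. You correctly notice that the hypothesis as transcribed is incomplete: with no relation at all between $y_1,\ldots,y_m$ and $T$, a $T$-independent lower bound on $d_n(T)$ is impossible, and your interpretive fix (assume $y_i\in T(B_X)$ and pick preimages $x_i\in B_X$) is the intended reading. Your Step~1 (push $E$ into $\ell_2^m$ via $I$, use $\operatorname{tr}(P)\ge m-n+1$, transfer the distance through $\|I\|$) is correct and is the natural opening move.

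The two places where the specific shape of the bound is actually generated are, however, both gaps. First, the mechanism you describe for the term $\sum_i\|I^*(e_i^*)\|$ is vacuous: the inequality $|\phi(y_i)|\le \|I^*(e_i^*)\|_{Y^*}$ for $\phi\in B_{Y^*}$ is weaker than the trivial bound $|\phi(y_i)|\le\|y_i\|_Y\le 1$, since $\|I^*(e_i^*)\|_{Y^*}\ge\langle I^*(e_i^*),y_i\rangle=1$. ``Telescoping'' it would just yield $m$, not $\sum_i\|I^*(e_i^*)\|$, so this step cannot be where the coordinate functionals enter; they must come from estimating the $Y$-norm of a specific test vector built from $Pe_1,\ldots,Pe_m$ against a carefully chosen functional, and that estimate is absent. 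Second, and more fundamentally, the whole content of the lemma is the Step~3 balancing --- producing $\alpha$ with $\|P\alpha\|_2\gtrsim m-\sqrt{(n-1)m}$ while the $X$-norm of $\sum_i\alpha_ix_i$ is $\lesssim\sqrt{(n-1)m}\,\|I\|+\sum_i\|I^*(e_i^*)\|$ --- and you label this a ``key combinatorial lemma'' with a vague gesture at thresholding or random signs, without supplying the argument. Reading the form of the answer off the statement is not a derivation: the numerator $m-\sqrt{(n-1)m}$ and the hybrid denominator have to emerge from the interplay between $\operatorname{tr}(P)\ge m-n+1$ and a Cauchy--Schwarz / support-size trade-off, and none of that is carried out. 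As it stands the proposal is a plausible plan, not a proof.
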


\subsection{The Schatten classes}

The singular values $\sigma_1,\dots,\sigma_N$ of a real $N\times N$ matrix $A$ are defined to be the square roots of the eigenvalues of the positive self-adjoint operator $A^*A$, which are simply the eigenvalues of $|A|:=\sqrt{A^*A}$. The singular values are arranged in non-increasing order, that is, $\sigma_1(A) \geq \dots \geq \sigma_N(A)\geq 0$. The singular value decomposition shall be used in the form $A=U\Sigma V^T$, where $U,V\in\R^{N\times N}$ are orthogonal matrices,
and $\Sigma\in\R^{N\times N}$ is a diagonal matrix with $\sigma_1(A),\dots,\sigma_N(A)$ on the diagonal.

For $0<p\le \infty$, the Schatten $p$-class $\Sc_p^N$ is the $N^2$-dimensional space of all $N\times N$ real matrices acting from $\ell_2^N$ to $\ell_2^N$ equipped with the Schatten $p$-norm (which is a quasi-norm for $0<p<1$)
\[
\|A\|_{\Sc_p} := \bigg(\sum_{j=1}^N\sigma_j(A)^p\bigg)^{1/p}.
\]
Let us remark that $\|\cdot\|_{\Sc_1}$ is the nuclear norm, $\| \cdot \|_{\Sc_2}$ the Hilbert--Schmidt norm, and $\|\cdot\|_{\Sc_\infty}$ the operator norm. We denote the unit ball of the Schatten class $\Sc_p^N$ by
\begin{align*}
B_p^N :=\Big\{A\in\R^{N\times N}\,:\,\|A\|_{\Sc_p}\le 1\Bigr\}.
\end{align*}

We shall use the following well-known results about the Schatten classes. The first concerns the operator norms of natural identities, where the behavior is the same as for the commutative sequence spaces $\ell_p$.

\begin{lemma}
	\label{lem:norm}
	For all $0<p,q\leq \infty$, we have 
	\[
	\| \Sc_p^N \hookrightarrow \Sc_q^N \| = \max\big\{ 1, N^{1/q-1/p}\big\}.
	\]
\end{lemma}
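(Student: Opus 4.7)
The plan is to reduce the statement to the classical inequality comparing $\ell_p^N$ and $\ell_q^N$ norms on $\R^N$, exploiting the fact that the Schatten $r$-(quasi-)norm of a matrix depends only on its vector of singular values. Concretely, for any $A \in \R^{N\times N}$ with singular value vector $\sigma(A) = (\sigma_1(A),\ldots,\sigma_N(A))$, we have $\|A\|_{\Sc_r} = \|\sigma(A)\|_r$ for every $0 < r \leq \infty$, where the right-hand side is the $\ell_r^N$ (quasi-)norm.

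For the upper bound, I would use the standard inclusion $\ell_p^N \hookrightarrow \ell_q^N$. If $p \leq q$, the monotonicity of $\ell_r$-quasi-norms on a fixed finite-dimensional space gives $\|\sigma(A)\|_q \leq \|\sigma(A)\|_p$, hence $\|A\|_{\Sc_q} \leq \|A\|_{\Sc_p}$. If $p > q$, Hölder's inequality applied to the components of $\sigma(A)$ with exponents $p/q$ and $(p/q)^*$ yields $\|\sigma(A)\|_q \leq N^{1/q-1/p}\|\sigma(A)\|_p$, so $\|A\|_{\Sc_q} \leq N^{1/q-1/p}\|A\|_{\Sc_p}$. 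Combining both cases gives $\|\Sc_p^N \hookrightarrow \Sc_q^N\| \leq \max\{1, N^{1/q-1/p}\}$.

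For the matching lower bound, I would exhibit explicit extremal matrices. When $p \leq q$, take a rank-one matrix, e.g.\ $A = e_1 e_1^T$ (a diagonal matrix with a single $1$), whose only nonzero singular value is $1$; then $\|A\|_{\Sc_p} = \|A\|_{\Sc_q} = 1$, giving the ratio $1$. When $p > q$, take $A = \id_N$, the $N \times N$ identity matrix, whose singular values are all equal to $1$; then $\|\id_N\|_{\Sc_p} = N^{1/p}$ and $\|\id_N\|_{\Sc_q} = N^{1/q}$, so the ratio is $N^{1/q-1/p}$.

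There is no real obstacle here beyond recording the reduction to the commutative inequality and pointing out the two extremizers. The only place to be a bit careful is in the quasi-Banach range $0 < r < 1$, where one should note that the identity $\|A\|_{\Sc_r} = \|\sigma(A)\|_r$ still holds by the very definition of $\|\cdot\|_{\Sc_r}$ in terms of singular values, and that Hölder's inequality for exponents $p/q \geq 1$ applied to the coordinates $\sigma_j(A)^q$ remains valid regardless of whether $p,q \geq 1$ or not.
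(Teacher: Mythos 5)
Your proof is correct and complete; the paper records Lemma~\ref{lem:norm} as a well-known fact without proof, so there is no argument to compare against. The reduction to the $\ell_p^N$--$\ell_q^N$ comparison via singular values, the upper bound by monotonicity (for $p\leq q$) or H\"older applied to $\sigma_j(A)^q$ with exponents $p/q$ and its conjugate (for $p>q$), and the extremizers $e_1e_1^T$ and $\id_N$ are exactly the standard route, and your remark that nothing breaks for $0<r<1$ is the right thing to flag.
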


The following result shows that, just like for the $\ell_p^n$ balls, the convex hull of the Schatten unit balls $B_p^N$ with $0<p\leq 1$ is the unit ball of the Schatten trace class. The proof is simple and we include it for the sake of completeness. 

\begin{lemma}
	\label{lem:convex hull quasi balls}
	For all $0<p\leq 1$, we have 
	\[
	\conv(B_p^N) = B_1^N.
	\]
\end{lemma}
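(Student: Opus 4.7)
The plan is to prove both inclusions using the singular value decomposition and the fact that the extreme points we care about are rank-one matrices, which sit simultaneously in every Schatten ball.

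For the inclusion $\conv(B_p^N) \subset B_1^N$, it suffices to show $B_p^N \subset B_1^N$ since $B_1^N$ is convex (here $p\geq \text{(nothing)}$ is not needed, we just use that the Schatten $1$-ball is a convex set). If $A \in B_p^N$, then $\sum_j \sigma_j(A)^p \leq 1$ forces in particular $\sigma_j(A) \leq 1$ for every $j$. Since $0<p\leq 1$, this pointwise bound gives $\sigma_j(A) \leq \sigma_j(A)^p$, and summing yields $\|A\|_{\Sc_1} \leq \|A\|_{\Sc_p}^p \leq 1$.

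For the reverse inclusion $B_1^N \subset \conv(B_p^N)$, I would fix $A \in B_1^N$ and take its singular value decomposition $A = U\Sigma V^T$. Writing $u_j$, $v_j$ for the columns of $U$, $V$, this gives
\[
A = \sum_{j=1}^N \sigma_j(A)\, u_j v_j^T,
\]
where each rank-one matrix $u_j v_j^T$ has a single non-zero singular value equal to $\|u_j\|_2 \|v_j\|_2 = 1$. Hence $\|u_j v_j^T\|_{\Sc_p} = 1$ for every $p\in (0,\infty]$, so $u_j v_j^T \in B_p^N$ for every such $p$. Setting $s := \sum_{j=1}^N \sigma_j(A) = \|A\|_{\Sc_1} \leq 1$, we may write
\[
A = \sum_{j=1}^N \sigma_j(A)\, u_j v_j^T + (1-s)\cdot 0,
\]
which is a convex combination of $u_1 v_1^T, \ldots, u_N v_N^T$ and the zero matrix, all of which belong to $B_p^N$. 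Therefore $A \in \conv(B_p^N)$.

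The argument is routine; the only subtlety is noticing that the rank-one matrices appearing in the SVD lie in every Schatten ball simultaneously, and handling the fact that $\|A\|_{\Sc_1}$ may be strictly less than $1$ by padding the convex combination with the zero matrix (which lies in $B_p^N$ trivially). No obstacle is expected.
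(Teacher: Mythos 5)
Your proof is correct and follows essentially the same route as the paper: both establish $\conv(B_p^N)\subset B_1^N$ via monotonicity of the Schatten quasi-norms plus convexity of $B_1^N$, and both prove the reverse inclusion by expanding the SVD as $A=\sum_j \sigma_j(A)\,u_jv_j^T$ with each rank-one factor having Schatten $p$-norm~$1$ for every $p$. The only (cosmetic) difference is that you explicitly pad the combination with the zero matrix to cover $\|A\|_{\Sc_1}<1$, whereas the paper treats only the normalized case $\|A\|_{\Sc_1}=1$ and leaves that reduction implicit.
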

\begin{proof}
Obviously, due to minimality $\conv(B_p^N)\subset B_1^N$. The other direction follows directly from the singular value decomposition. Indeed, let $A\in B_1^N$ with $\|A\|_{\Sc_1}=1$. Then there exist orthogonal matrices $U,V\in\R^{N\times N}$ and $\Sigma$ such that $A=U\Sigma V^T$, where $\Sigma$ is the diagonal matrix with $s_1(A),\dots,s_N(A)\geq 0$ on the diagonal. Since $A\in B_1^N$, $\sum_{i=1}^Ns_i(A)=1$ and therefore, writing $E_{ii}$, $i\in\{1,\dots,N\}$ for the $N\times N$ matrix with $E_{ii}(k,\ell) = 1$ for $i=k=\ell$ and $0$ otherwise, one has
\[
A = U\Sigma V^T = \sum_{i=1}^N s_i(A) \big[ U E_{ii} V^T\big],
\] 
with $UE_{ii}V^T\in B_p^N$. This completes the proof.
\end{proof}

The next result concerns the optimal Dvoretzky-dimension of Schatten $p$-classes and goes back to Figiel, Lindenstrauss, and Milman and their groundbreaking paper \cite{FLM1977}. 

\begin{lemma}[{{\cite[Example 3.3]{FLM1977}}}]
	\label{lem:Schatten-largest-ell-2-iso}
	The largest dimension $k = k(q)$ such that the space $\Sc^N_q$ contains a subspace $E$ with $\dim E = k$ and $d(E,\ell^k_2) \leq 2$, satisfies 
	\begin{equation*}
		k(q) \asymp
		\begin{cases}
			N^2 &:\, 1\leq  q \leq 2\\
			N^{1+2/q} &:\,  2\leq  q \leq \infty.
		\end{cases}
	\end{equation*}
\end{lemma}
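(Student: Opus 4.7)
The plan is to invoke the Figiel--Lindenstrauss--Milman version of Dvoretzky's theorem, which identifies, up to absolute constants, the largest dimension of a $2$-Euclidean subspace of an $n$-dimensional normed space $X=(\R^n,\|\cdot\|)$ with the Dvoretzky parameter
\begin{equation*}
k^*(X) := n\bigl(M(X)/b(X)\bigr)^2,
\end{equation*}
where $M(X)=\int_{S^{n-1}}\|x\|\,d\sigma(x)$ and $b(X)=\sup_{x\in S^{n-1}}\|x\|$, with $S^{n-1}$ the Euclidean unit sphere. I would apply this with $X=\Sc_q^N$, $n=N^2$, and the Euclidean structure induced by the Hilbert--Schmidt inner product, so that the relevant unit sphere consists of $N\times N$ matrices with $\|A\|_{\Sc_2}=1$.

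The diameter $b(\Sc_q^N)$ is the maximum of $\|(\sigma_j)\|_{\ell_q^N}$ subject to $\|(\sigma_j)\|_{\ell_2^N}=1$, $\sigma_j\geq 0$. A rank-one HS-unit matrix realizes the value $1$, while the scaled identity $N^{-1/2}I_N$ realizes $N^{1/q-1/2}$; standard monotonicity of $\ell_q^N$-norms (cf.\ Lemma~\ref{lem:norm}) shows that these are the extremes, so $b(\Sc_q^N)=1$ for $q\geq 2$ and $b(\Sc_q^N)=N^{1/q-1/2}$ for $q\leq 2$. For the average $M(\Sc_q^N)$ I would pass to a Gaussian integral: if $G$ is an $N\times N$ matrix with i.i.d.\ $\mathcal{N}(0,1)$ entries, the rotational invariance of the HS-norm yields
\begin{equation*}
M(\Sc_q^N)\asymp \frac{\mathbb{E}\|G\|_{\Sc_q}}{\sqrt{\mathbb{E}\|G\|_{\Sc_2}^2}}=\frac{\mathbb{E}\|G\|_{\Sc_q}}{N}.
\end{equation*}
Invoking the classical random-matrix estimate $\mathbb{E}\|G\|_{\Sc_q}\asymp N^{1/2+1/q}$ (uniformly in $1\leq q\leq\infty$, reflecting the quarter-circle law for singular values of $G/\sqrt{N}$) then gives $M(\Sc_q^N)\asymp N^{1/q-1/2}$ in both regimes.

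Combining the two computations, $(M/b)^2\asymp N^{2/q-1}$ for $q\geq 2$ and $(M/b)^2\asymp 1$ for $q\leq 2$, and multiplying by $n=N^2$ produces $k^*(\Sc_q^N)\asymp N^{1+2/q}$ and $N^2$, respectively, matching the claimed asymptotics. The main obstacle is the sharp two-sided estimate $\mathbb{E}\|G\|_{\Sc_q}\asymp N^{1/2+1/q}$: the upper bound follows from $\mathbb{E}\sigma_1(G)\lesssim\sqrt{N}$ combined with H\"older's inequality, while the matching lower bound requires either concentration of the bulk singular values (effective for $q\geq 2$) or a duality/type-$2$ argument leveraging Tomczak-Jaegermann's theorem for $q\leq 2$. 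Finally, the FLM theorem furnishes the lower bound on $k(q)$ directly, whereas the matching upper bound $k(q)\lesssim k^*(\Sc_q^N)$ follows from the Milman--Schechtman converse, which is known to be tight for unitarily invariant norms such as those of Schatten type.
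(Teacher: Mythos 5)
The paper does not prove this lemma at all: it is quoted verbatim from Figiel--Lindenstrauss--Milman~\cite[Example~3.3]{FLM1977}, which is exactly where both the lower and the upper estimate on $k(q)$ are established. So strictly speaking you cannot ``match the paper's proof'' here --- but your proposal is a reasonable sketch of what FLM actually do, and your calculations of the two key quantities are correct. Indeed, $b(\Sc_q^N)=\max\{1,N^{1/q-1/2}\}$ follows from Lemma~\ref{lem:norm}, the passage from the spherical average $M$ to the Gaussian one costs a factor $\sqrt{\mathbb{E}\|G\|_{\Sc_2}^2}=N$, and the uniform two-sided bound $\mathbb{E}\|G\|_{\Sc_q}\asymp N^{1/2+1/q}$ together give $M(\Sc_q^N)\asymp N^{1/q-1/2}$, whence $N^2(M/b)^2$ is $N^2$ for $q\le 2$ and $N^{1+2/q}$ for $q\ge 2$.

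The place where you are hand-waving, and where you should be careful, is the matching upper bound $k(q)\lesssim N^2(M/b)^2$. There is no ``converse to Dvoretzky'' valid for all normed spaces --- in general $k(X)$ can be much larger than $k^*(X)$ --- so one cannot appeal to a generic Milman--Schechtman theorem made ``tight for unitarily invariant norms'' without a precise reference and hypotheses. What makes this especially important here is that the upper-bound direction is exactly the one the paper uses: in the proof of Proposition~\ref{prop:Gelfand-missing-intermediate-regime}, the authors invoke (via \cite[Inequality~(2.10)]{FLM1977}) that any $m$-dimensional subspace $E\subset\Sc_q^N$ with $m\gg N^{1+2/q}$ has $d(E,\ell_2^m)\gtrsim\sqrt{m/N^{1+2/q}}$, i.e.\ precisely the statement that $2$-Euclidean subspaces of $\Sc_q^N$ cannot have dimension much larger than $N^{1+2/q}$. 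That direction is the substantive content of FLM's Example~3.3 and requires a Schatten-specific argument (FLM obtain it using cotype/$2$-summing-type estimates for $\Sc_q$, not a black-box converse). If you want a self-contained proof rather than a citation, replace the ``Milman--Schechtman converse'' appeal with the FLM-style argument, or with the $\pi_2$-norm computation $\pi_2(\Sc_q^N\hookrightarrow\Sc_2^N)=N$ (cf.\ the alternative proof of Proposition~\ref{prop:Gelfand-missing-intermediate-regime} and~\cite[Lemma~5.2]{DMM2006}), which pins down the maximal dimension of a well-Euclidean subspace directly.
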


\subsection{Schatten class embeddings and $s$-numbers}

Below we recall and collect some facts about interpolation of $s$-numbers in the context of embeddings of Schatten classes.
More specifically, let us recall that the Gelfand numbers interpolate in the codomain space in the sense that if $0< p, q\leq \infty$, and $q_\theta$ lies between $p$ and $q$, then
\begin{equation}
\label{eq:Gelfand-interpolation}
c_n\big(\Sc^N_{p}\hookrightarrow \Sc^N_{q_\theta}\big) \leq c_n\big(\Sc^N_{p}\hookrightarrow \Sc^N_{q}\big)^{\frac{1/p-1/q_\theta}{1/p-1/q}},
\end{equation}
see, e.g., \cite[Lemma 2.1]{HPV2020}.
Similarly, the Kolmogorov numbers interpolate in the domain space.

\begin{lemma}
	\label{lem:Kolmogorov-interpolation}
	Let $0 < p, q \leq \infty$, let $\theta\in[0,1]$,
	and let $p_\theta$ be the weighted harmonic mean of $p$ and $q$ defined by
	\begin{equation}
		\frac{1}{p_{\theta}} = \frac{\theta}{q} + \frac{1-\theta}{p}.
	\end{equation}
	Assume that $n,N\in \N$ with $1\leq n \leq N^2$.
	Then
	\begin{equation}
		\label{eq:Kolmogorov-interpolation}
		d_n\big(\Sc^N_{p_\theta}\hookrightarrow \Sc^N_{q}\big) \lesssim_{p, p_\theta, q} d_n\big(\Sc^N_{p}\hookrightarrow \Sc^N_{q}\big)^{\frac{1/p_\theta - 1/q}{1/p - 1/q}},
	\end{equation}
\end{lemma}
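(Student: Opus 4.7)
The plan is to adapt the standard singular-value-truncation argument used in real interpolation of $s$-numbers to the non-commutative setting. First, the exponent simplifies via $1/p_\theta - 1/q = (1-\theta)(1/p - 1/q)$, so writing $R := d_n(\Sc_p^N \hookrightarrow \Sc_q^N)$ the target becomes $d_n(\Sc_{p_\theta}^N \hookrightarrow \Sc_q^N) \lesssim_{p,p_\theta,q} R^{1-\theta}$. The endpoint cases $\theta \in \{0,1\}$ are trivial or follow from Lemma~\ref{lem:approximation-Kolmogorov-id-1}, and up to swapping which side of the truncation threshold is approximated through $\Sc_p$, one may assume $p < q$, so that $p < p_\theta < q$.

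Given $\varepsilon > 0$, I will pick an almost-optimal subspace $E \subset \Sc_q^N$ with $\dim E < n$ realizing $R$ up to a factor $1+\varepsilon$. For an arbitrary $B$ in the unit ball of $\Sc_{p_\theta}^N$, with singular value decomposition $B = U\Sigma V^T$, I split $B = B_1 + B_2$ along the singular values at some level $\lambda>0$, keeping the large ones in $B_1$. A direct application of $\sum_j \sigma_j(B)^{p_\theta} \leq 1$ combined with the signs $p - p_\theta \leq 0 \leq q - p_\theta$ yields
\[
\|B_1\|_{\Sc_p} \leq \lambda^{1 - p_\theta/p}, \qquad \|B_2\|_{\Sc_q} \leq \lambda^{1 - p_\theta/q}.
\]
The first estimate lets me approximate $B_1$ by a rescaled element $y$ of $E$ with $\Sc_q$-error at most $(1+\varepsilon)R\, \lambda^{1-p_\theta/p}$; combining with the second via the (quasi-)triangle inequality in $\Sc_q^N$ bounds $\|B - y\|_{\Sc_q}$ by a constant times $R\, \lambda^{1-p_\theta/p} + \lambda^{1-p_\theta/q}$.

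To finish, I optimize in $\lambda$: the two terms balance at $\lambda = R^{1/(p_\theta(1/p - 1/q))}$, where they both equal $R^{(1/p_\theta - 1/q)/(1/p-1/q)} = R^{1-\theta}$. Passing to the supremum over $B \in B_{\Sc_{p_\theta}^N}$, the infimum over admissible $E$, and then $\varepsilon \to 0$ gives the desired bound. There is no serious obstacle in the argument; the mildest point is that for $q<1$ the space $\Sc_q^N$ is only quasi-Banach, but the quasi-triangle constant depends only on $q$ and is absorbed into the $\lesssim_{p,p_\theta,q}$ symbol. Besides the singular value decomposition and a H\"older-type estimate on the singular value sequence, no further tool is required, so the argument works uniformly over the full parameter range.
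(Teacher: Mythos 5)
Your argument is correct and follows the same route as the paper: fix a near-optimal subspace $E$ realizing $d_n(\Sc_p^N\hookrightarrow\Sc_q^N)$, split $A=X+Y$ so that $X$ is small in $\Sc_p$ (hence well-approximated by $E$) while $Y$ is small in $\Sc_q$, and absorb $Y$ into the error via the quasi-triangle inequality. The only cosmetic difference is that the paper phrases the decomposition abstractly via Peetre's $K$-functional for $(\Sc_p^N,\Sc_q^N)$, citing the estimate $\|A\|_{\Sc_{p_\theta}}\gtrsim t^{-\theta}K(t,A)$ from Bergh--L\"ofstr\"om, whereas you derive exactly that bound by hand through the singular-value threshold split, which makes the proof self-contained.
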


 \begin{proof}
 	Fix all the parameters appearing in the statement and consider Peetre's $K$-functional defined by
 	\[
 	K(t,A) \coloneqq \inf \bigl\{ \|X\|_{\Sc_p} + t \|Y\|_{\Sc_q} \,:\, A = X+Y, \ X\in\Sc_p^N, \ Y\in\Sc_q^N \bigr\},
 	\quad
 	t\in(0,\infty),\ A\in\Sc_{p_\theta}^N.
 	\]
 	Note that for all $A\in \Sc_{p_\theta}^N$ and $t>0$, 
 	\begin{equation}
 		\label{eq:peetre}
 	\|A\|_{\Sc_{p_\theta}} \gtrsim_{p,p_\theta, q} t^{-\theta} K(t, A),
 	\end{equation}
 	where we used the singular value decomposition and the corresponding $K$-functional estimate for the $\ell_p$-norms (see, e.g., \cite[Theorems 3.11.2 and 5.6.1]{BL_book}). 
 	
 	Now, take any $\varepsilon>0$ and
 	let $E\subset \Sc_q^N$ be a subspace with $\dim E < n$ such that
 	\begin{equation}
 	\label{eq:Kolm-int-p-q-eps}
 	\inf_{B\in E} \| A - B\|_{\Sc_q}
 	 \leq 
 	 (1+\varepsilon)  d_n\big(\Sc^N_{p}\hookrightarrow \Sc^N_{q}\big) \|A\|_{\Sc_p}
 	\end{equation}
 	for all linear operators $A\in\Sc_{p}^N$. Consider any linear operator $A\in\Sc_{p_\theta}^N$.
 	The definition of $p_\theta$ means that $1-\theta = \frac{1/p_\theta-1/q}{1/p-1/q}$, i.e., in order to prove the claimed estimate, it is enough to show that there exists $B_0\in E$ such that
 	\[
 	\| A - B_0\|_{\Sc_q}
 	\leq 
 	(1+\varepsilon)  d_n\big(\Sc^N_{p}\hookrightarrow \Sc^N_{q}\big)^{1-\theta} \|A\|_{\Sc_{p_\theta}}.
 	\] 	
 	To this end, we set $t\coloneqq d_n(\Sc^N_{p}\hookrightarrow \Sc^N_{q})^{-1}$.
 	By~\eqref{eq:peetre}, we can find $X\in\Sc_p^N$, $Y\in\Sc_q^N$ such that $ A = X+Y$ and
 	\begin{equation}
 		\label{eq:int-Kolm-p-X-Y}
 	\|A\|_{\Sc_{p_\theta}} \gtrsim_{p,p_\theta, q} t^{-\theta} \bigl( \|X\|_{\Sc_p} + t \|Y\|_{\Sc_q} \bigr).
 	\end{equation}
 	By \eqref{eq:Kolm-int-p-q-eps}, there exists $B_0\in E$ such that
 	\begin{equation}
 		\label{eq:Kolm-int-p-B_0}
 		\| X - B_0\|_{\Sc_q}
 	\leq 
 	(1+\varepsilon)  d_n\big(\Sc^N_{p}\hookrightarrow \Sc^N_{q}\big) \|X\|_{\Sc_p}.
 	\end{equation} 
    It follows that (recall the choice $t = d_n(\Sc^N_{p}\hookrightarrow \Sc^N_{q})^{-1}$)
    \begin{align*}
    	 \| A - B_0\|_{\Sc_q} 
    	 &=  \| X+ Y - B_0\|_{\Sc_q} \lesssim_q  \| X - B_0\|_{\Sc_q} + \|Y\|_{\Sc_q}\\
      &\leq (1+\varepsilon) t^{-1}  \| X\|_{\Sc_p} + \|Y\|_{\Sc_q} \leq   (1+\varepsilon) t^{-1}  \bigl( \| X\|_{\Sc_p} + t\|Y\|_{\Sc_q}\bigr)\\
      & \lesssim_{p,p_\theta, q}  (1+\varepsilon) t^{-1+\theta}  \| A\|_{\Sc_{p_\theta}} = (1+\varepsilon)  d_n\big(\Sc^N_{p}\hookrightarrow \Sc^N_{q}\big)^{1-\theta} \|A\|_{\Sc_{p_\theta}},
    \end{align*}
    where we first used the triangle inequality (with some constant depending on $q$ if $q\in(0,1)$), then inequality \eqref{eq:Kolm-int-p-B_0} and inequality  \eqref{eq:int-Kolm-p-X-Y}, and finally the definition of $t$.
    This completes the proof.
 \end{proof}

The next lemma  describes the behavior of the Gelfand numbers when the domain space changes. It was used by Hinrichs and Michels in \cite{HM2005} to obtain the lower bound for $c_n\big(\Sc^N_{p}\hookrightarrow \Sc^N_{q}\big)$ in the intermediate range for $2\leq p\leq q$.
For the sake of completeness we recall the proof in the specific setting needed in this paper. 

\begin{lemma}[{{\cite[Proposition 3.1, Proposition 6.1]{MR2223589}}}]
	\label{lem:Gelfand-HiMi-interpolation}
	Let $0 < p, q \leq \infty$, let $\theta\in[0,1]$,
	and let $p_\theta$ be the weighted harmonic mean of $p$ and $q$ defined by
	\begin{equation}\label{eq:interpolation parameters lower bound gelfand}
		\frac{1}{p_{\theta}} = \frac{1-\theta}{q} + \frac{\theta}{p}.
	\end{equation}
	Assume that $n,N\in \N$ with $1\leq n \leq N^2$.
	Then
	\[
	c_n\big(\Sc^N_{p_{\theta}}\hookrightarrow \Sc^N_{q}\big) \geq c_n\big(\Sc^N_{p}\hookrightarrow \Sc^N_{q}\big)^{\frac{1/p_{\theta} - 1/q}{1/p - 1/q}}.
	\]
\end{lemma}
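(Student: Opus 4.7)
The plan is to reduce the statement to the log-convexity of Schatten (quasi-)norms, combined only with the variational definition of Gelfand numbers. The key analytic ingredient is the pointwise interpolation inequality
\[
\|A\|_{\Sc_{p_\theta}} \leq \|A\|_{\Sc_p}^{\theta}\, \|A\|_{\Sc_q}^{1-\theta}, \qquad A\in\R^{N\times N},
\]
which follows immediately from the singular value decomposition (reducing the Schatten $r$-(quasi-)norm to the $\ell_r^N$-norm of the singular value vector) and from H\"older's inequality applied to the factorization $\sigma_i^{p_\theta}=\sigma_i^{\theta p_\theta}\cdot\sigma_i^{(1-\theta)p_\theta}$ with conjugate exponents $p/(\theta p_\theta)$ and $q/((1-\theta)p_\theta)$. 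These exponents are conjugate by the very definition of $p_\theta$, and they both lie in $[1,\infty]$ since $\theta\in[0,1]$ forces $p\geq \theta p_\theta$ and $q\geq(1-\theta)p_\theta$; thus the argument goes through uniformly in the Banach and quasi-Banach regimes.

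Given the interpolation inequality, the rest is purely a rearrangement. For every nonzero $A$ I rewrite it as
\[
\frac{\|A\|_{\Sc_q}}{\|A\|_{\Sc_{p_\theta}}} \geq \biggl(\frac{\|A\|_{\Sc_q}}{\|A\|_{\Sc_p}}\biggr)^{\theta}.
\]
Next I fix any subspace $F\subset\R^{N\times N}$ with $\codim F<n$ and take the supremum of both sides over $A\in F\setminus\{0\}$. Because $x\mapsto x^{\theta}$ is monotone on $[0,\infty)$, the supremum passes through the exponent on the right-hand side, giving
\[
\bigl\|\id|_F\colon\Sc_{p_\theta}^N\to\Sc_q^N\bigr\| \geq \bigl\|\id|_F\colon\Sc_p^N\to\Sc_q^N\bigr\|^{\theta}.
\]
A second monotonicity step lets me take the infimum over all subspaces $F$ with $\codim F<n$ through the exponent, which yields
\[
c_n\bigl(\Sc_{p_\theta}^N\hookrightarrow\Sc_q^N\bigr) \geq c_n\bigl(\Sc_p^N\hookrightarrow\Sc_q^N\bigr)^{\theta}.
\]
The relation $\theta = (1/p_\theta - 1/q)/(1/p - 1/q)$, immediate from the definition of $p_\theta$ in \eqref{eq:interpolation parameters lower bound gelfand}, produces the exponent in the statement.

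There is no genuine obstacle here: the whole argument is a one-line application of log-convexity followed by two monotone passages to the supremum/infimum. The only point that deserves a sentence of care is the validity of H\"older in the quasi-Banach regime, but as observed above, the relevant conjugate exponents are automatically at least $1$, so the same proof covers all $0<p,q\leq\infty$.
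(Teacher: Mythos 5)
Your proof is correct and follows essentially the same route as the paper: both proceed by fixing a subspace $F$ of codimension $<n$, applying Littlewood's interpolation inequality $\|A\|_{\Sc_{p_\theta}} \leq \|A\|_{\Sc_q}^{1-\theta}\,\|A\|_{\Sc_p}^{\theta}$ (i.e., log-convexity of Schatten quasi-norms) to elements of $F$, rearranging, and then taking a supremum over $A\in F$ and an infimum over $F$. The paper passes via a particular $A\in F$ realizing the operator norm of the restricted identity rather than taking the supremum directly, but that is purely a matter of bookkeeping; your observation that the H\"older conjugate exponents are automatically $\geq 1$ is exactly what makes the argument uniform over the quasi-Banach range, and the paper uses the same fact implicitly.
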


\begin{proof}
	Let $F$ be any subspace of $\Sc^N_{p_{\theta}}$ with $\codim F < n$.
	The definition of $p_\theta$ means that $\theta = \frac{1/p_{\theta}-1/q}{1/p-1/q}$.
	In order to prove the claimed estimate, it is enough to show that there exists a linear operator $A\in F$, $A\neq 0$, such that 
	\[
	\|A\|_{\Sc_{q}}   \geq c_n\big(\Sc^N_{p}\hookrightarrow \Sc^N_{q}\big)^{\theta} \|A \|_{\Sc_{p_{\theta}}}.
	\]
	We may algebraically consider $F$ as a subspace of $\Sc_{p}^N$. Hence, there must exist $A\in F$, $A\neq 0$, such that
	\[
	\|A\|_{\Sc_{q}}   \geq c_n\big(\Sc^N_{p}\hookrightarrow \Sc^N_{q}\big)  \|A \|_{\Sc_{p}}.
	\]
	Then, first using Littlewood's interpolation inequality with \eqref{eq:interpolation parameters lower bound gelfand} (which follows directly from the generalized H\"older inequality) followed by the previous estimate, we obtain
	\[
	\|A \|_{\Sc_{p_{\theta}}} \leq   \|A \|_{\Sc_{q}}^{1-\theta}  \|A \|_{\Sc_{p}}^\theta \leq  \|A \|_{\Sc_{q}}  c_n\big(\Sc^N_{p}\hookrightarrow \Sc^N_{q}\big)^{-\theta}.
	\]
	Rearranging this last bound completes the proof.
\end{proof}

By duality we immediately get the following version for Kolmogorov numbers in the Banach space setting.

\begin{corollary}
\label{cor:Kolmogorov-HiMi-interpolation}
Let $1\leq p, q \leq \infty$, let $\theta\in[0,1]$,
and let $q_\theta$ be the weighted harmonic mean of $p$ and $q$ defined by
\[
	\frac{1}{q_{\theta}} = \frac{1-\theta}{q} + \frac{\theta}{p}.
\]
Assume that $n,N\in \N$ with $1\leq n \leq N^2$.
Then
\[
d_n\big(\Sc^N_{p}\hookrightarrow \Sc^N_{q_\theta}\big) \geq d_n\big(\Sc^N_{p}\hookrightarrow \Sc^N_{q}\big)^{\frac{1/p - 1/q_\theta}{1/p - 1/q}}.
\]
\end{corollary}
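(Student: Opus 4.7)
The plan is to deduce the corollary from Lemma~\ref{lem:Gelfand-HiMi-interpolation} by a direct duality argument. Since $p,q\in[1,\infty]$, all the spaces in sight are finite-dimensional Banach spaces, so every operator considered is compact and the duality relation $d_n(T) = c_n(T^*)$ holds, together with trace duality $(\Sc_r^N)^* = \Sc_{r^*}^N$. In particular, the dual of the natural identity $\Sc_p^N\hookrightarrow \Sc_r^N$ may be identified with $\Sc_{r^*}^N\hookrightarrow \Sc_{p^*}^N$, yielding
\[
d_n\big(\Sc_p^N\hookrightarrow \Sc_{q_\theta}^N\big) = c_n\big(\Sc_{q_\theta^*}^N\hookrightarrow \Sc_{p^*}^N\big),\qquad
d_n\big(\Sc_p^N\hookrightarrow \Sc_q^N\big) = c_n\big(\Sc_{q^*}^N\hookrightarrow \Sc_{p^*}^N\big).
\]

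Next I would translate the harmonic interpolation identity into its dualized form. Starting from $1/q_\theta = (1-\theta)/q + \theta/p$ and subtracting from $1$, a one-line rearrangement gives
\[
\frac{1}{q_\theta^*} = \theta \cdot \frac{1}{p^*} + (1-\theta) \cdot \frac{1}{q^*},
\]
which is exactly the harmonic-mean identity of Lemma~\ref{lem:Gelfand-HiMi-interpolation} applied with domain exponent $q^*$, codomain exponent $p^*$, and interpolation parameter $1-\theta$ (playing the role of $\theta$ in that lemma).

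With these identifications, the Gelfand interpolation lemma furnishes
\[
c_n\big(\Sc_{q_\theta^*}^N\hookrightarrow \Sc_{p^*}^N\big)
\;\geq\;
c_n\big(\Sc_{q^*}^N\hookrightarrow \Sc_{p^*}^N\big)^{\frac{1/q_\theta^* - 1/p^*}{1/q^* - 1/p^*}}.
\]
It then remains to verify that the exponent matches the one in the statement: using $1/r^* = 1 - 1/r$ for $r\in\{p,q,q_\theta\}$, both the numerator and the denominator flip sign, so
\[
\frac{1/q_\theta^* - 1/p^*}{1/q^* - 1/p^*} = \frac{1/p - 1/q_\theta}{1/p - 1/q}.
\]
Substituting back the two Gelfand numbers by the corresponding Kolmogorov numbers yields the claimed inequality.

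There is no substantial obstacle here; the entire argument is a bookkeeping exercise once one fixes the correct choice of parameters $(P,Q,\Theta) = (q^*, p^*, 1-\theta)$ inside Lemma~\ref{lem:Gelfand-HiMi-interpolation}. The only mild care needed is to observe that duality of $s$-numbers applies unconditionally in this setting (finite dimensions, $p,q\geq 1$), so that both sides of the target inequality can legitimately be converted between Gelfand and Kolmogorov form without losing constants.
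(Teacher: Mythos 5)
Your proof is correct and is exactly the argument the paper has in mind: the paper simply writes ``By duality we immediately get the following version for Kolmogorov numbers,'' and your careful unpacking of that one-liner (applying $d_n(T)=c_n(T^*)$, trace duality $(\Sc_r^N)^*=\Sc_{r^*}^N$, and Lemma~\ref{lem:Gelfand-HiMi-interpolation} with parameters $(q^*,p^*,1-\theta)$, then checking the exponent) is precisely the intended bookkeeping. All identifications and exponent computations are accurate.
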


Finally, we have the following lemma which describes the behavior of the approximation and Kolmogorov numbers for $0<p\leq 1$, i.e., when the domain space is a quasi-Banach space, showing that those $s$-numbers are stationary at $p=1$.
In the case of $\ell_p$ spaces a similar statement is proved in \cite{MR2432104} (see also \cite[Section 13.1]{MR1393437}).

\begin{lemma}
	\label{lem:approximation-Kolmogorov-p-quasi}
	Let $0 < p \leq 1 \leq q \leq \infty$ and assume that $n,N\in \N$ with $1\leq n \leq N^2$.
	Then
	\begin{equation*}
		a_n\big(\Sc_p^N \hookrightarrow \Sc_q^N\big) = 	a_n\big(\Sc_1^N \hookrightarrow \Sc_q^N\big)
    \end{equation*}
    and
    \begin{equation*}
		d_n\big(\Sc_p^N \hookrightarrow \Sc_q^N\big) = 	d_n\big(\Sc_1^N \hookrightarrow \Sc_q^N\big).	
	\end{equation*}
\end{lemma}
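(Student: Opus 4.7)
The plan is to exploit Lemma~\ref{lem:convex hull quasi balls}, which tells us that $\conv(B_p^N) = B_1^N$ for all $0<p\leq 1$, combined with the elementary fact that the supremum of a convex function over a set equals its supremum over the convex hull of that set. The role of the hypothesis $q\geq 1$ is that $\Sc_q^N$ is a Banach space, so several "distance-to-a-linear-object" functionals on $\Sc_q^N$ are genuine seminorms and in particular convex.

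For the approximation numbers, the idea is to show that for \emph{every} linear map $S\colon \R^{N\times N}\to \R^{N\times N}$,
\[
\|S\colon \Sc_p^N \to \Sc_q^N\| = \|S\colon \Sc_1^N \to \Sc_q^N\|.
\]
Indeed, $A\mapsto \|SA\|_{\Sc_q}$ is a seminorm on $\R^{N\times N}$, hence convex. By Jensen's inequality, the supremum of a convex function over a set $C$ coincides with its supremum over $\conv(C)$. Applying this with $C=B_p^N$ and using Lemma~\ref{lem:convex hull quasi balls} yields
\[
\sup_{A\in B_p^N} \|SA\|_{\Sc_q} = \sup_{A\in \conv(B_p^N)} \|SA\|_{\Sc_q} = \sup_{A\in B_1^N} \|SA\|_{\Sc_q}.
\]
Since the defining infimum for $a_n$ is taken over the same collection of linear maps $T_n$ with $\rank T_n<n$ in either case (the rank is an algebraic condition, independent of the quasi-norm on the domain), applying the above equality to $S=\id - T_n$ and taking the infimum gives $a_n(\Sc_p^N\hookrightarrow \Sc_q^N) = a_n(\Sc_1^N\hookrightarrow \Sc_q^N)$.

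For the Kolmogorov numbers, the same mechanism applies after observing that for any linear map $T$ and any subspace $E\subset \Sc_q^N$ with $\dim E <n$, the quantity
\[
A \longmapsto \inf_{B\in E}\|TA - B\|_{\Sc_q} = \|Q_E^{\Sc_q^N} TA\|_{\Sc_q^N/E}
\]
is a seminorm on $\R^{N\times N}$ (here we use that $\Sc_q^N$ is a Banach space so that the quotient norm is well defined). Hence this functional is convex in $A$, and the same convex-hull argument gives
\[
\sup_{A\in B_p^N}\inf_{B\in E}\|TA-B\|_{\Sc_q} = \sup_{A\in B_1^N}\inf_{B\in E}\|TA-B\|_{\Sc_q}
\]
for every admissible $E$. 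Taking $T$ to be the embedding and the infimum over $E$ with $\dim E < n$ on both sides produces $d_n(\Sc_p^N\hookrightarrow \Sc_q^N) = d_n(\Sc_1^N\hookrightarrow \Sc_q^N)$.

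I do not anticipate serious obstacles in this argument; it is essentially a packaging of Lemma~\ref{lem:convex hull quasi balls} together with convexity in the codomain. The only subtlety to verify carefully is that the class of competitors in each infimum is intrinsic (the rank of $T_n$ and the subspace $E\subset \Sc_q^N$ do not depend on the quasi-norm chosen on the domain), so that replacing $p$ by $1$ only changes the objective value, which is exactly what the convex-hull identity controls. Note also that this strategy does not transfer to Gelfand numbers, since there the infimum runs over subspaces $F$ of the domain, and altering the quasi-norm on the domain changes neither that set of subspaces nor the outcome directly, but the passage to $B_1^N$ is not available because there is no analogous convex-hull description of the admissible set of competitors.
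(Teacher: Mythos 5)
Your proof is correct and rests on essentially the same idea as the paper's: Lemma~\ref{lem:convex hull quasi balls} combined with convexity of the codomain functional. Where the paper argues each inequality via an $\varepsilon$-near-optimal competitor and the triangle inequality, you package both directions at once through the clean observation that a seminorm's supremum over $B_p^N$ equals its supremum over $\conv(B_p^N)=B_1^N$, but the mathematical content is the same.
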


\begin{proof}
	Fix $0 < p \leq 1 \leq q \leq \infty$ and $1\leq n \leq N^2$. 
	We first establish the upper bounds, which are easily obtained by factorization. Indeed, we have
	\begin{align*}
		a_n\big(\Sc_p^N \hookrightarrow \Sc_q^N\big) & \leq \| \Sc_p^N \hookrightarrow \Sc_1^N \| a_n\big(\Sc_1^N \hookrightarrow \Sc_q^N\big) = a_n\big(\Sc_1^N \hookrightarrow \Sc_q^N\big)
	\end{align*}
	and 
	\begin{align*}	
		d_n\big(\Sc_p^N \hookrightarrow \Sc_q^N\big) & \leq \| \Sc_p^N \hookrightarrow \Sc_1^N \|
		d_n\big(\Sc_1^N \hookrightarrow \Sc_q^N\big) = d_n\big(\Sc_1^N \hookrightarrow \Sc_q^N\big).
	\end{align*}

	We continue with the lower bounds and start with the approximation numbers.
	Fix any $\varepsilon>0$ and let $T_n\in \mathscr{L}(\Sc_p^N,\Sc_q^N)$ be such that $\rank T_n <n$ and so that, for all $A\in B_p^N$,
	\begin{equation}
		\label{eq:quasi-conv-appr}
		\| A - T_n A\|_{\Sc_q} \leq (1+\varepsilon) a_n\big(\Sc_p^N \hookrightarrow \Sc_q^N\big).
	\end{equation}
	In order to prove that $a_n(\Sc_1^N \hookrightarrow \Sc_q^N) \leq a_n(\Sc_p^N \hookrightarrow \Sc_q^N)$, we shall show that also, for all $A\in B_1^N $, 
	\begin{equation*}
		 \| A - T_n A\|_{\Sc_q} \leq (1+\varepsilon) a_n\big(\Sc_p^N \hookrightarrow \Sc_q^N\big).
	\end{equation*}
	Then, taking the infimum over all $S\in\mathscr L(\Sc_1^N,\Sc_q^N)$ with $\rank S<n$ and letting $\varepsilon\downarrow 0$, we obtain the desired bound.
	So let us take any linear operator $A\in B_1^N$.
	First, we observe that since $0<p\leq 1$, $B_1^N = \conv(B_p^N)$ (see Lemma \ref{lem:convex hull quasi balls}) and 
	therefore there exists $m\in\N$,
	linear operators $A_1, \ldots, A_m\in B^N_p$,
	and real numbers $\lambda_1,\dots,\lambda_m\geq 0$ with $\sum_{i=1}^m \lambda_i =1$,
	such that $A = \sum_{i=1}^m\lambda_i A_i$.
	Therefore, by \eqref{eq:quasi-conv-appr} applied to $A_1,\dots,A_m$, we obtain
	\[
	\| A - T_n A\|_{\Sc_q}
	\leq \sum_{i=1}^m \lambda_i \| A_i - T_n A_i\|_{\Sc_q}
	\leq \sum_{i=1}^m \lambda_i (1+\varepsilon) a_n\big(\Sc_p^N \hookrightarrow \Sc_q^N\big) 
	= (1+\varepsilon)a_n\big(\Sc_p^N \hookrightarrow \Sc_q^N\big).
	\]
	This shows that indeed $a_n(\Sc_1^N \hookrightarrow \Sc_q^N) = a_n(\Sc_p^N \hookrightarrow \Sc_q^N)$.
	
	The proof for Kolmogorov numbers is almost verbatim the same, but we include it for the sake of completeness and convenience of the reader.
	Fix any $\varepsilon>0$ and let $F\subset \Sc_q^N$ be a subspace with $\dim F <n$ and such that, for all $A\in B_p^N$,
	\begin{equation}
		\label{eq:quasi-conv-kolm}
		\inf_{B\in F} \| A - B\|_{\Sc_q} \leq (1+\varepsilon) d_n\big(\Sc_p^N \hookrightarrow \Sc_q^N\big).
	\end{equation}
	In order to prove that $d_n(\Sc_1^N \hookrightarrow \Sc_q^N) \leq d_n(\Sc_p^N \hookrightarrow \Sc_q^N)$, we shall show that, for all $A\in B_1^N$,
	\begin{equation*}
		 \inf_{B\in F} \| A - B\|_{\Sc_q} \leq (1+\varepsilon) d_n\big(\Sc_p^N \hookrightarrow \Sc_q^N\big).
	\end{equation*}
	To this end, take any linear operator $A\in B_1^N$.
	Again, since $B_1^N = \conv(B_p^N)$ by Lemma \ref{lem:convex hull quasi balls},
	there exists $m\in\N$,
	linear operators $A_1, \ldots, A_m\in B^N_p$,
	and real numbers $\lambda_1,\dots,\lambda_m\geq 0$ with $\sum_{i=1}^m \lambda_i =1$,
	such that $A = \sum_{i=1}^m\lambda_i A_i$.
	For each of the operators $A_1,\dots,A_m$, we use~\eqref{eq:quasi-conv-kolm}  to find $B_i\in F$, such that 
	\begin{equation}
		\label{eq:quasi-conv-kolm-2}
		\| A_i - B_i\|_{\Sc_q} \leq (1+\varepsilon)^2 d_n\big(\Sc_p^N \hookrightarrow \Sc_q^N\big).
	\end{equation}
	Let $B := \sum_{i=1}^m \lambda_i B_i\in F$.
	Then, by \eqref{eq:quasi-conv-kolm-2},
	\[
	\| A - B\|_{\Sc_q}
	\leq \sum_{i=1}^m \lambda_i \| A_i - B_i\|_{\Sc_q}
	\leq \sum_{i=1}^m \lambda_i (1+\varepsilon)^2 d_n\big(\Sc_p^N \hookrightarrow \Sc_q^N\big) 
	= (1+\varepsilon)^2 d_n\big(\Sc_p^N \hookrightarrow \Sc_q^N\big).
	\]
	Taking the infimum over all possible linear operators in $B\in F$ completes the proof that $d_n(\Sc_1^N \hookrightarrow \Sc_q^N) = d_n(\Sc_p^N \hookrightarrow \Sc_q^N)$.
\end{proof}

\begin{remark}
	In the above proof we used the triangle inequality in $\Sc_q^N$.
	Lemma~\ref{lem:approximation-Kolmogorov-p-quasi} is not true in general for $0<p<1$ and $q<1$. Indeed, we shall see in Corollary~\ref{cor:approximation-upper-triangle} below that for $0< q\leq p \leq1$,
	\[
	a_n\big(\Sc_p^N \hookrightarrow \Sc_q^N\big) \asymp_{p,q} \max\Big\{ 1, \frac{N^2-n+1}{N}\Big\}^{1/q-1/p},
	\]
	while 
	$a_n(\Sc_p^N \hookrightarrow \Sc_q^N) =1 $ for $0<p\leq q\leq1$ (see Proposition~\ref{prop:approximation-Kolmogorov-1}).
\end{remark}

\section{Gelfand numbers in the case $0 < p \leq 2 \leq q \leq \infty$ and $0<p < 1$, $p\leq q\leq 2$}
\label{sec:Gelfand}

In this section we improve and complement the results of Hinrichs, the first named author, and  Vyb\'iral \cite{HPV2020}.
First we show that for $0< p \leq 2 \leq q \leq \infty$ the upper bounds provided by the authors in \cite{HPV2020} in the intermediate range $(1-\cHPV) N^2\leq n \leq N^2 - \cHPV N^{1+2/q} +1$ are in fact asymptotically sharp. We shall provide two proofs of which the first is valid for $0<p\leq 2$ with some absolute constant in the lower bound, while the second holds for $1\leq p \leq 2$ but gives $1$ as absolute constant. More precisely, in the first proof we shall follow the approach of Gordon--K\"onig--Sch\"utt.
In order to estimate $c_n\big(\Sc_p^N \hookrightarrow \Sc_{p^*}^N\big)$ for $1\leq p \leq 2$ they used the probabilistic estimate of Lemma~\ref{lem:GKS-FLM}  coupled with the results of Figiel--Lindenstrauss--Milman \cite{FLM1977} on optimal Dvoretzky-dimension of Schatten $p$-classes. In the second proof, we use a relation between $2$-summing norms of invertible linear operators between finite dimensional Banach spaces and their Gelfand numbers.

\begin{proposition}
	\label{prop:Gelfand-missing-intermediate-regime}
	Let $0 < p \leq 2 \leq q \leq \infty$ and assume that $n,N\in \N$ with $1\leq n \leq N^2$.
	Then
	\begin{equation*}
		c_n\big(\Sc_p^N \hookrightarrow \Sc_q^N\big) 
		\gtrsim N^{-1/2-1/p} \sqrt{N^2-n+1}
	\end{equation*}
	whenever $  (1-\cHPV) N^2 \leq n\leq N^2 - \cHPV N^{1+2/q} +1$, where $\cHPV\in(0,1)$ is the universal constant from \cite[Lemma 2.5]{HPV2020}.
\end{proposition}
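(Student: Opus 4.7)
My approach is to follow the classical strategy of Gordon, K\"onig, and Sch\"utt~\cite{GKS1987}, which in the diagonal case $q=p^*$ already yielded the correct order of $c_n$ in the same range of codimensions. The main tool is Lemma~\ref{lem:GKS-FLM}, applied to $T = \mathrm{id}\colon \Sc_p^N \to \Sc_q^N$, viewed as an invertible operator between the $N^2$-dimensional spaces $\Sc_p^N$ and $\Sc_q^N$. The natural factorization of the inverse $T^{-1}\colon \Sc_q^N \to \Sc_p^N$ goes through the Hilbert space $\ell_2^{N^2}$, which I identify isometrically with $\Sc_2^N$ via the Hilbert--Schmidt inner product. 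Concretely, I set
\[
A \coloneqq \mathrm{id}\colon \Sc_q^N \to \Sc_2^N, \qquad B \coloneqq \mathrm{id}\colon \Sc_2^N \to \Sc_p^N,
\]
so that $T^{-1} = BA$. Since $q\ge 2$ and $p\le 2$, Lemma~\ref{lem:norm} gives $\|A\| = N^{1/2-1/q}$ and $\|B\| = N^{1/p-1/2}$, hence $\|A\|\cdot\|B\| = N^{1/p-1/q}$.

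Writing $k \coloneqq N^2-n+1$, Lemma~\ref{lem:GKS-FLM} then reduces the problem to estimating from below the quantity
\[
\delta_{k,q} \coloneqq \inf\bigl\{ d(E, \ell_2^k) \,:\, E \subset \Sc_q^N,\ \dim E = k \bigr\},
\]
namely
\[
c_n\bigl(\Sc_p^N \hookrightarrow \Sc_q^N\bigr) \;\ge\; N^{1/q-1/p}\,\delta_{k,q}.
\]
The assumption $n \le N^2 - \cHPV N^{1+2/q} + 1$ translates to $k \ge \cHPV N^{1+2/q}$, which, by Lemma~\ref{lem:Schatten-largest-ell-2-iso}, is precisely the regime in which $k$ has just surpassed (up to a universal constant) the optimal Dvoretzky dimension $k(q) \asymp N^{1+2/q}$ of $\Sc_q^N$. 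Beyond this threshold, no further $k$-dimensional subspace of $\Sc_q^N$ can be close to Euclidean; the quantitative form of this fact, which I will invoke, is the standard ``reverse Dvoretzky'' lower bound
\[
\delta_{k,q} \;\gtrsim\; \sqrt{k / k(q)} \;\asymp\; \sqrt{k}\cdot N^{-1/2-1/q},
\qquad k \ge \cHPV N^{1+2/q}.
\]
Inserting this into the previous estimate gives
\[
c_n\bigl(\Sc_p^N \hookrightarrow \Sc_q^N\bigr)
\;\gtrsim\; N^{1/q-1/p}\cdot \sqrt{k}\cdot N^{-1/2-1/q}
\;=\; N^{-1/2-1/p}\sqrt{N^2-n+1},
\]
as required. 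The case $0<p<1$ is handled in exactly the same way, since the factorization and all norm computations above remain valid in the quasi-Banach setting and Lemma~\ref{lem:GKS-FLM} was recorded there in that generality.

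\textbf{Main obstacle.} The nontrivial ingredient is the quantitative reverse-Dvoretzky bound on $\delta_{k,q}$; the rest of the argument is bookkeeping. For $q = p^*$ this is precisely the estimate on distances of subspaces of $\Sc_{p^*}^N$ to Hilbert space that Gordon, K\"onig, and Sch\"utt extracted from the Figiel--Lindenstrauss--Milman analysis of Schatten classes, and the same machinery (type/cotype constants and the critical dimension $k(q)\asymp N^{1+2/q}$ from Lemma~\ref{lem:Schatten-largest-ell-2-iso}) handles the full range $2\le q\le \infty$. Finally, to obtain the refined lower bound with explicit constant $1$ when $1\le p\le 2$ (as announced after Theorem~\ref{thm:main-Gelfand}), I plan to replace the Gordon--K\"onig--Sch\"utt route with the inequality relating Gelfand numbers and $2$-summing norms, namely $c_n(T) \cdot \pi_2(T^{-1}) \ge \sqrt{m-n+1}$ for an isomorphism $T$ between $m$-dimensional Banach spaces, combined with a direct evaluation of $\pi_2(\Sc_q^N \hookrightarrow \Sc_p^N)$ via the identity $\pi_2(\mathrm{id}_{\ell_2^{N^2}}) = N$; here the sharpness of constants in each of these facts is what yields the explicit constant $1$.
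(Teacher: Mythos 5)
Your proposal is correct and takes essentially the same route as the paper: both apply Lemma~\ref{lem:GKS-FLM} to $T=\mathrm{id}\colon\Sc_p^N\to\Sc_q^N$ with the factorization $T^{-1}=BA$ through $\Sc_2^N$, both compute $\|A\|\,\|B\|=N^{1/p-1/q}$, and both lower-bound the Banach--Mazur distance of $(N^2-n+1)$-dimensional subspaces of $\Sc_q^N$ from Euclidean space by combining Lemma~\ref{lem:Schatten-largest-ell-2-iso} with the Figiel--Lindenstrauss--Milman inequality (your ``reverse Dvoretzky'' bound is precisely \cite[Inequality (2.10)]{FLM1977}, which the paper cites). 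Your sketch of the $2$-summing alternative for constant $1$ when $1\leq p\leq 2$ also matches the paper's second proof.
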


\begin{proof}
	We will apply Lemma~\ref{lem:GKS-FLM} to $X=\Sc^N_p$, $Y = \Sc^N_q$, $m=N^2$.
	
	Let $(1-\cHPV) N^2 \leq n\leq N^2 - \cHPV N^{1+2/q}$.
	Take any subspace $E\subset \Sc^N_q$ such that $\dim E = N^2-n+1 \geq \cHPV N^{1+2/q}$.
	Note that if $F\subset E$, $\dim F = k$,  and $d(F, \ell^k_2)$, then $k\lesssim N^{1+2/q}$ by Lemma~\ref{lem:Schatten-largest-ell-2-iso}. 
	Hence, by \cite[Inequality (2.10)]{FLM1977},
	\[
	d(E, \ell^{N^2-n+1}_2) \gtrsim \sqrt{\frac{N^2-n+1 }{N^{1+2/q}}} = N^{-1/2-1/q}  \sqrt{N^2-n+1 }.
	\]
	
	Let $A\colon \Sc^N_q \to \Sc^N_2$, $B\colon\Sc^N_2\to \Sc^N_p$ be  the natural embeddings.
	Then
	\[
	\|A \| \| B\| = N^{1/2-1/q} N^{1/p-1/2} = N^{1/p-1/q}.
	\]
	Hence, Lemma~\ref{lem:GKS-FLM} yields
	\[
	c_n\big(\Sc^N_p \hookrightarrow \Sc^N_q\big) \gtrsim N^{-1/2-1/p}  \sqrt{N^2-n+1}.
	\qedhere
	\]
\end{proof}

As mentioned before, when we remain in the setting of Banach spaces, i.e., if $1\leq p \leq 2 \leq q \leq \infty$, then instead of following the approach of Gordon, K\"onig, and Sch\"utt one can use lower bounds on Gelfand numbers involving the $2$--summing norm of $\Sc_q^N\hookrightarrow \Sc_p^N$; the latter can be computed explicitly. In particular, this way we obtain, instead of some absolute constant, the constant $1$ in the lower bound.
\begin{proof}[Alternative proof of Proposition \ref{prop:Gelfand-missing-intermediate-regime} for $1\leq p \leq 2\leq q \leq \infty$]
Recall that the $2$-summing norm (which is always greater than or equal to the operator norm) of a linear operator $T$ between two Banach spaces $X$ and $Y$ is defined as
\[
\pi_2(T) := \sup\Bigg\{ \Big(\sum_{i=1}^n\|Tx_i\|_Y^2\Big)^{1/2}\,:\, n\in\N,\,x_1,\dots,x_n\in X,\, \sup_{\|x^*\|=1}\Big(\sum_{i=1}^n|x^*(x_i)|^2\Big)^{1/2} \leq 1 \Bigg\} \in[0,\infty].
\] 
One can show that (see, e.g., \cite[Lemma 5.2]{DMM2006})
\[
\pi_2\big(\Sc_p^N\hookrightarrow\Sc_q^N\big) = N \,\frac{\max\big\{1,N^{1/q-1/2} \big\}}{\max\big\{1,N^{1/p-1/2}\big \}}.
\]
Combining this with the fact (see \cite[Lemma, p. 231]{CD1997}) that for any invertible linear operator $T$ between $m$-dimensional Banach spaces $X$ and $Y$ and any $1\leq n \leq m$ it holds that
\[
c_n\big( T:X\to Y\big) \geq \frac{\sqrt{m-n+1}}{\pi_2(T^{-1})},
\]
we obtain the lower bound of Proposition~\ref{prop:Gelfand-missing-intermediate-regime} with constant $1$.
\end{proof}

In \cite{HPV2020} the lower bound for small codimensions was based on estimates of Kolmogorov numbers of balls in mixed norm spaces due to Vasil'eva \cite{V2013}. Below we present an alternative proof (based again on the results of Gordon--K\"onig--Sch\"utt \cite{GKS1987})
which gives absolute constants rather than constants that depend on $p$ and $q$ as in \cite[Proposition 4.9]{HPV2020}.

\begin{proposition}
        \label{prop:Gelfand-cleaner-small-regime}
        Let $1\leq p \leq 2 \leq q \leq \infty$ and assume that $n,N\in \N$ with $1\leq n \leq N^2$.
        Then
        \begin{equation*}
                c_n\big(\Sc_p^N \hookrightarrow \Sc_q^N\big)
                \gtrsim \min\Bigl\{1, \frac{N^{3/2-1/p}}{n^{1/2}} \Bigr\}
        \end{equation*}
        for  $ 1\leq n\leq (1-\cHPV) N^2$, where $\cHPV\in(0,1)$ is the universal constant from \cite[Lemma 2.5]{HPV2020}.
\end{proposition}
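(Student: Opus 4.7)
The plan is to reduce to a bound on Kolmogorov numbers via duality and then apply Lemma~\ref{lem:GKS-Kolmogorov-lower-bound}. Since $1 \leq p \leq 2 \leq q \leq \infty$, all spaces involved are Banach, and the duality $c_n(\Sc_p^N \hookrightarrow \Sc_q^N) = d_n(\Sc_{q^*}^N \hookrightarrow \Sc_{p^*}^N)$ reduces the problem to lower-bounding the Kolmogorov numbers of the dual embedding, where now $1 \leq q^* \leq 2$ and $2 \leq p^* \leq \infty$.

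To this end, I would apply Lemma~\ref{lem:GKS-Kolmogorov-lower-bound} to $T = \Sc_{q^*}^N \hookrightarrow \Sc_{p^*}^N$ with $m = N^2$, taking $\{y_k\}_{k=1}^{N^2}$ to be an enumeration of the standard matrix units $E_{ij}$ (each of which has a single nonzero singular value equal to $1$, so that $\|E_{ij}\|_{\Sc_{p^*}} = 1$ and $y_k \in B_Y$), and letting $I : \Sc_{p^*}^N \to \ell_2^{N^2}$ be the natural identification of matrices with their coordinate vectors, so that $I(E_{ij})$ is the corresponding standard basis vector of $\ell_2^{N^2}$. This choice gives $\|I\| = \|\Sc_{p^*}^N \hookrightarrow \Sc_2^N\| = N^{1/2 - 1/p^*} = N^{1/p - 1/2}$ by Lemma~\ref{lem:norm}; and by trace duality $I^*$ sends $e_{ij}^*$ back to the matrix unit $E_{ij}$ viewed as an element of $\Sc_p^N$, so that $\|I^*(e_{ij}^*)\|_{\Sc_p} = 1$ and $\sum_{k=1}^{N^2} \|I^*(e_k^*)\|_{\Sc_p} = N^2$.

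Substituting these quantities into the lemma yields
\[
d_n\bigl(\Sc_{q^*}^N \hookrightarrow \Sc_{p^*}^N\bigr) \geq \frac{N^2 - N\sqrt{n-1}}{N\sqrt{n-1}\cdot N^{1/p-1/2} + N^2} = \frac{1 - \sqrt{(n-1)/N^2}}{1 + \sqrt{n-1}\,N^{1/p - 3/2}}.
\]
The restriction $n \leq (1-\cHPV)N^2$ keeps the numerator bounded below by the absolute constant $1 - \sqrt{1-\cHPV}$. For the denominator I would split into two regimes: if $n - 1 \leq N^{3-2/p}$, the second term is at most $1$, so $d_n \gtrsim 1$; if $n - 1 > N^{3-2/p}$, the second term dominates and yields $d_n \gtrsim N^{3/2-1/p}/\sqrt{n}$. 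Combining the two cases gives $d_n \gtrsim \min\{1, N^{3/2-1/p}/\sqrt{n}\}$, and the duality $c_n(T) = d_n(T^*)$ finishes the proof.

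The only real decision in the argument is the choice of the auxiliary pair $(y_i, I)$; since matrix units have Schatten norm exactly $1$ in every $\Sc_r^N$, they sit simultaneously as unit vectors in $B_{\Sc_{p^*}^N}$ and (via $I^*$) in $\Sc_p^N$, which makes all the quantities appearing in the GKS--Kolmogorov formula computable exactly. This is also why the resulting constants depend only on the universal constant $\cHPV$ rather than on $p$ and $q$, matching the improvement over \cite[Proposition 4.9]{HPV2020} that is advertised in the excerpt. No serious obstacle is anticipated: once the correct lemma and the correct $(y_i, I)$ are identified, the estimates are a short computation.
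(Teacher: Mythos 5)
Your argument is correct and is essentially the same as the paper's: both reduce via duality to bounding $d_n(\Sc_{q^*}^N\hookrightarrow\Sc_{p^*}^N)$, both invoke Lemma~\ref{lem:GKS-Kolmogorov-lower-bound} with $m=N^2$ and $I$ the natural Hilbert--Schmidt embedding, and both extract $\min\{1,N^{3/2-1/p}/\sqrt{n}\}$ in the range $n\leq(1-\cHPV)N^2$. You make explicit the choice of $y_i$ as matrix units and the trace-duality computation of $\|I^*(e_{ij}^*)\|_{\Sc_p}=1$, which the paper leaves implicit; the substance is identical.
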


\begin{proof}
By duality,
\[
c_n\big(\Sc^N_p\hookrightarrow \Sc^N_q\big) = d_n\big(\Sc^N_{q^*}\hookrightarrow \Sc^N_{p^*}\big).
\]
We now apply Lemma~\ref{lem:GKS-Kolmogorov-lower-bound} to $X=\Sc^N_{q^*}$, $Y = \Sc^N_{p^*}$, $m=N^2$, and with $I$ being the natural embedding of $Y = \Sc^N_{p^*}$ into $\Sc^N_2$.
Since $1\leq n\leq (1-\cHPV )N^2$ and $\|\Sc^N_{p^*}\hookrightarrow \Sc^N_2\| = N^{1/2-1/p^*} = N^{-1/2+1/p}$, we obtain
\[
 d_n\big(\Sc^N_{q^*}\hookrightarrow \Sc^N_{p^*}\big) \geq
        \frac{N^2 - \sqrt{(n-1)N^2}}{\sqrt{(n-1)N^2} N^{-1/2+1/p} + N^2}
         \gtrsim
          \frac{N^2}{\sqrt{n} N^{1/2+1/p} + N^2} \gtrsim \min\Bigl\{ 1, \frac{N^{3/2-1/p}}{n^{1/2}} \Bigr\}.
        \]
This completes the proof.
\end{proof}

Finally, we give an elementary proof of the following result of Ch\'avez-Dom\'inguez and Kutzarova~\cite{CDK2015}.

\begin{proposition}[{\cite[Theorem 5.1]{CDK2015}}]
\label{prop:Gelfand-CDK}
Let $0 < p \leq 1$, $p \leq q \leq 2$ and assume that $n,N\in \N$ with $1\leq n \leq N^2$.
Then
\begin{equation*}
c_n\big(\Sc_p^N \hookrightarrow \Sc_q^N\big) 
\asymp_{p,q}
\min\Bigl\{1,\Bigl(\frac{N}{n}\Bigr)^{1/p-1/q} \Bigr\}.
 \end{equation*}
Moreover, the lower bound carries over to $q>2$.
\end{proposition}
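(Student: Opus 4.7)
The plan is to establish the claimed two-sided estimate by two independent arguments: an interpolation-based upper bound valid for $p\leq q\leq 2$, and a lower bound from an elementary ``rank-drop plus sparse Hölder'' argument that is oblivious to the precise value of $q$, so that the ``moreover'' clause comes for free.

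For the upper bound, I would invoke the Gelfand-numbers interpolation inequality \eqref{eq:Gelfand-interpolation} with the reference codomain $\Sc_2^N$. Since $p\leq q\leq 2$, the value $q$ lies between $p$ and $2$, so applying \eqref{eq:Gelfand-interpolation} with the roles of ``$q$'' and ``$q_\theta$'' played by $2$ and $q$ respectively yields
\[
c_n\bigl(\Sc_p^N \hookrightarrow \Sc_q^N\bigr) \leq c_n\bigl(\Sc_p^N \hookrightarrow \Sc_2^N\bigr)^{(1/p-1/q)/(1/p-1/2)}.
\]
The endpoint case $q=2$ is already known, namely $c_n\bigl(\Sc_p^N \hookrightarrow \Sc_2^N\bigr) \lesssim_p \min\{1,N/n\}^{1/p-1/2}$ (this is part of item~1 in Subsection~\ref{subsec:known results Gelfand}). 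Substituting this endpoint estimate into the interpolation inequality yields the desired upper bound $c_n\bigl(\Sc_p^N \hookrightarrow \Sc_q^N\bigr) \lesssim_{p,q} \min\{1,(N/n)^{1/p-1/q}\}$.

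For the lower bound I would run a classical rank-drop argument valid for any $0<p\leq q\leq \infty$. The key linear-algebraic observation is that, for every integer $k\geq 1$ with $kN\geq n$ and every subspace $F\subset\R^{N\times N}$ of codimension at most $n-1$, the set $F$ contains a nonzero matrix of rank at most $k$. Indeed, fixing linearly independent vectors $v_1,\ldots,v_k\in\R^N$ and letting $\pi\colon \R^{N\times N}\to \R^{N\times N}/F$ be the quotient map, the linear map
\[
\Phi\colon \R^{kN}\to \R^{N\times N}/F,\qquad \Phi(u_1,\ldots,u_k):=\pi\Bigl(\sum_{i=1}^k u_i v_i^T\Bigr),
\]
has nontrivial kernel (because $kN\geq n>\codim F$), and the linear independence of the $v_i$'s ensures that any nonzero tuple in the kernel produces a nonzero matrix $A=\sum_i u_i v_i^T\in F$ of rank at most $k$. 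Because the singular value sequence of $A$ has at most $k$ nonzero entries, Hölder's inequality applied on those entries gives $\|A\|_{\Sc_p}\leq k^{1/p-1/q}\|A\|_{\Sc_q}$, so after rescaling $A$ so that $\|A\|_{\Sc_p}=1$ one has $\|\iota|_F\|\geq k^{1/q-1/p}$. Taking the infimum over admissible $F$ and choosing $k=\lceil n/N\rceil$ yields $c_n\bigl(\Sc_p^N \hookrightarrow \Sc_q^N\bigr) \gtrsim_{p,q} \min\{1,(N/n)^{1/p-1/q}\}$. Since $q$ never entered the rank-drop step, this lower bound persists for $q>2$, establishing the ``moreover'' clause.

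The only nontrivial external input is the endpoint bound at $q=2$ borrowed from \cite{HPV2020}; everything else is dimension counting plus Hölder on $k$-sparse vectors. The main obstacle I foresee is purely bookkeeping: tracking the rounding in $k=\lceil n/N\rceil$ so that the implicit constants depend only on $p$ and $q$, and making sure the extreme regimes ($n\leq N$, where the bound is $\asymp 1$, and $n$ close to $N^2$, where $k$ approaches $N$) are handled uniformly.
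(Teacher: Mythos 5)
Your lower bound is genuinely different from the paper's and arguably more self-contained: you construct, inside any subspace $F$ of codimension $<n$, a nonzero matrix of rank at most $k=\lceil n/N\rceil$ by a dimension count on maps of the form $\sum_{i=1}^{k}u_i v_i^T$, then apply H\"older on the at most $k$ nonzero singular values to get $\|\iota|_F\|\geq k^{1/q-1/p}$. The paper instead uses multiplicativity of $s$-numbers, writing $1=c_{N^2}(\Sc_p^N\hookrightarrow\Sc_p^N)\leq c_n(\Sc_p^N\hookrightarrow\Sc_q^N)\,c_{N^2-n+1}(\Sc_q^N\hookrightarrow\Sc_p^N)$ and importing the known asymptotics from \cite{HPV2020} for the reverse embedding $\Sc_q^N\hookrightarrow\Sc_p^N$ in the regime where the domain parameter exceeds the codomain parameter. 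Both routes extend automatically to $q>2$, giving the ``moreover'' clause; your version avoids any external $s$-number reference.

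On the upper bound, however, your argument does not match the spirit of the proposition and has a citation gap. You import the estimate $c_n(\Sc_p^N\hookrightarrow\Sc_2^N)\lesssim_p\min\{1,N/n\}^{1/p-1/2}$ as a known endpoint, citing ``item 1'' of Subsection \ref{subsec:known results Gelfand}. First, item 1 only gives that upper bound in the restricted range $1\leq n\leq c_{p,q}N^2$, so it does not by itself cover the full statement; the assertion for all $1\leq n\leq N^2$ is the $q=2$ (i.e.\ $p<q\leq 2$) line of \cite[Theorem A]{HPV2020}. Second — and more to the point — that $q=2$ endpoint for $0<p<1$ \emph{is} the $q=2$ case of the very Ch\'avez--Dom\'inguez--Kutzarova result being re-proved, so using it as a black box defeats the purpose of giving a new elementary proof. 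The paper instead starts from the older and more basic Carl--Defant estimate for $c_n(\Sc_1^N\hookrightarrow\Sc_2^N)$ and first derives the $(p,2)$ endpoint for $0<p<1$ by the domain-interpolation Lemma~\ref{lem:Gelfand-HiMi-interpolation} (a one-line H\"older argument), and only then applies the codomain interpolation inequality \eqref{eq:Gelfand-interpolation} exactly as you do. That intermediate step is precisely the missing idea in your proposal; without it, your upper bound is logically circular relative to the intent of Proposition~\ref{prop:Gelfand-CDK}.
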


\begin{proof}
We start with the upper bound.
We know that $ c_n(\Sc^N_{1}\hookrightarrow \Sc^N_{2})  \asymp \min\bigl\{ 1, (\frac{N}{n})^{1/2} \bigr\}$, see, e.g., \cite{CD1997}.
Thus, by Lemma~\ref{lem:Gelfand-HiMi-interpolation} (used with $q=2$, $p\in(0,1)$, and $p_\theta=1$),
\[
 c_n\big(\Sc^N_{p}\hookrightarrow \Sc^N_{2}\big) \leq c_n\big(\Sc^N_{1}\hookrightarrow \Sc^N_{2}\big)^{\frac{1/p-1/2}{1-1/2}} \lesssim_p  \min\Bigl\{ 1, \Bigl(\frac{N}{n}\Bigr)^{1/p-1/2} \Bigr\}
\]
for $0<p<1$.
Now we are in position to interpolate in the codomain space (similarly as Hinrichs, the first named author, and Vyb\'iral \cite{HPV2020} in the case $1\leq p \leq q \leq 2$):
 by \eqref{eq:Gelfand-interpolation} (used with $p\in (0,1)$, $q=2$, $q_0=q\in [p,2] $),
\[
 c_n\big(\Sc^N_{p}\hookrightarrow \Sc^N_{q}\big) \leq c_n\big(\Sc^N_{p}\hookrightarrow \Sc^N_{2}\big)^{\frac{1/p-1/q}{1/p-1/2}} \lesssim_{p,q}  \min\Bigl\{ 1, \Bigl(\frac{N}{n}\Bigr)^{1/p-1/q} \Bigr\}
\]
for $0<p<1$, $p\leq q \leq 2$.

As for the lower bound, let $0<p\leq 1$, $p\leq q \leq \infty$ and assume that $n,N\in \N$ with $1\leq n \leq N^2$.
Then
\[
1 = c_{N^2}\big(\Sc_p^N\hookrightarrow \Sc_p^N\big)
 \leq 
 c_n\big(\Sc_p^N\hookrightarrow \Sc_q^N\big) 
 c_{N^2-n+1}\big(\Sc_q^N\hookrightarrow \Sc_p^N\big).
\]
Hence,
\[		
c_n\big(\Sc_p^N\hookrightarrow \Sc_q^N\big)
		 \geq \frac{1}{c_{N^2-n+1}\big(\Sc_q^N\hookrightarrow \Sc_p^N\big)}  
		\asymp_{p,q} \frac{1}{ \max\bigl\{ 1, \frac{n}{N}\bigr\}^{1/p-1/q} }= \min\Bigl\{ 1, \frac{N}{n}\Bigr\}^{1/p-1/q},
\]
	where we used the results of \cite{HPV2020}.
%
\end{proof}

\section{Approximation and Kolmogorov numbers in the case $0<q\leq p \leq \infty$}
\label{sec:approximation-upper-triangle}

The case $0<q\leq p \leq \infty$ is relatively easy and we remark that in the literature there are results and conjectures about the asymptotics formulas for $a_n(\id\colon E_m \to F_m)$ and $c_n(\id\colon E_m \to F_m)$, where $E$ and $F$ are symmetric sequence spaces and $F$ is naturally embedded into $E$, see, e.g., \cite{MR1861746,MR1932582,HM2005,MR2223589}. 

Recall that in the case of $\ell_p$ spaces the exact values of $a_n(\ell_p^N\hookrightarrow \ell_q^N)$ and  $c_n(\ell_p^N\hookrightarrow \ell_q^N)$ are found by considering just the projections onto or restrictions to coordinate subspaces.
This suggests using subspaces containing only matrices with few nonzero singular values to estimate $s$-numbers of the embeddings $\Sc_p^N \hookrightarrow \Sc_q^N$.
In the case of Gelfand numbers in the range $0<q\leq p \leq \infty$ this idea was executed in \cite{HPV2020}  (extending the results of \cite[Proposition 4.1, Example 4.7]{HM2005} concerning  $q \leq p \leq \infty$ with $1\leq q \leq 2$).
Below we slightly adapt the proof of the upper bound  for the Gelfand numbers to get the same upper bound approximation numbers; again the case  $1\leq q\leq p \leq \infty$ appears already in \cite[Proposition 4.1, Corollary 4.8]{HM2005}.

\begin{proposition}
	\label{prop:approximation-upper-triangle-upper-bd}
	Let $0 < q \leq p\leq \infty$ and assume that $n,N\in \N$ with $1\leq n \leq N^2$.
	Then
	\begin{equation*}
		a_n\big(\Sc_p^N \hookrightarrow \Sc_q^N\big)  \lesssim_{p,q} \max\Bigl\{ 1, \frac{N^2-n+1}{N}\Bigr\}^{1/q-1/p}.
	\end{equation*}
\end{proposition}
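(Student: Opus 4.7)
The plan is to construct, for each admissible $n$, an explicit rank-$(n{-}1)$ linear operator on the space of $N\times N$ matrices that approximates the identity well in the $\Sc_p\to \Sc_q$ operator (quasi-)norm. Set $m:=\lfloor (n-1)/N\rfloor$ and let $P_m$ denote the orthogonal projection of $\ell_2^N$ onto $\mathrm{span}(e_1,\dots,e_m)$. I would then take $T_{n-1}\colon A\mapsto P_m A$ as my approximant. Since its image consists of matrices supported in the first $m$ rows, $\rank(T_{n-1})=mN\leq n-1<n$, so $T_{n-1}$ is admissible in the definition of $a_n$.

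Next, I would estimate $\|\id - T_{n-1}\colon \Sc_p^N\to \Sc_q^N\|=\sup_{\|A\|_{\Sc_p}\leq 1}\|(I-P_m)A\|_{\Sc_q}$ via two standard ingredients. First, $I-P_m$ is a rank-$(N-m)$ projection, so $(I-P_m)A$ has rank at most $N-m$; for any matrix $B$ of rank at most $r$ the singular-value inclusion $\|B\|_{\Sc_q}\leq r^{1/q-1/p}\|B\|_{\Sc_p}$ holds (it is just Hölder applied to the singular values, valid for all $0<q\leq p\leq\infty$). Second, the singular-value majorization $\sigma_i((I-P_m)A)\leq \|I-P_m\|\,\sigma_i(A)=\sigma_i(A)$ gives $\|(I-P_m)A\|_{\Sc_p}\leq \|A\|_{\Sc_p}$, again in the full range $0<p\leq\infty$. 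Combining these:
\[
\|\id-T_{n-1}\colon \Sc_p^N\to\Sc_q^N\|\leq (N-m)^{1/q-1/p}.
\]

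To conclude, I would compare $N-m$ to $\max\{1,(N^2-n+1)/N\}$. From $m\geq (n-1)/N-1$ one gets $N-m\leq (N^2-n+1)/N+1 \leq 2\max\{1,(N^2-n+1)/N\}$, and the resulting factor $2^{1/q-1/p}$ is absorbed into the $p,q$-dependent constant. The edge cases require no extra work: when $n\leq N$ we have $m=0$, hence $T_{n-1}=0$ and the bound collapses to the trivial $\|\id\colon\Sc_p^N\to\Sc_q^N\|=N^{1/q-1/p}$ from Lemma~\ref{lem:norm}, matching the right-hand side in that regime; and when $n=N^2$ we have $m=N-1$, giving $N-m=1$ and the bound $a_{N^2}\lesssim 1$, again consistent. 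I don't anticipate a genuine obstacle here — the only point worth double-checking is that both Schatten inclusion on low-rank matrices and the operator-norm submultiplicativity of $\|\cdot\|_{\Sc_p}$ remain valid in the quasi-Banach range $0<q\leq p<1$, which they do because both reduce to pointwise inequalities on singular values.
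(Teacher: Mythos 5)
Your proof is correct and takes essentially the same route as the paper: the paper zeroes out the first $k$ columns (right multiplication by a coordinate projection) rather than the first $m$ rows, but by transposition these are equivalent, and both use the same two ingredients — H\"older's inequality on the $\leq N-m$ nonzero singular values of the remainder, and the singular-value monotonicity under deleting rows/columns (your $\sigma_i((I-P_m)A)\le\|I-P_m\|\,\sigma_i(A)$ is the same fact the paper cites from Horn--Johnson). The choice $m=\lfloor(n-1)/N\rfloor$ and the final cleanup are identical.
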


By the results of \cite{HPV2020} this immediately yields the following corollary.

\begin{corollary}
	\label{cor:approximation-upper-triangle}
	Let $0 < q \leq p\leq \infty$ and assume that $n,N\in \N$ with $1\leq n \leq N^2$.
	Then
	\begin{equation*}
		a_n\big(\Sc_p^N \hookrightarrow \Sc_q^N\big)  \asymp_{p,q} c_n\big(\Sc_p^N \hookrightarrow \Sc_q^N\big)  \asymp_{p,q}  \max\Bigl\{ 1, \frac{N^2-n+1}{N}\Bigr\}^{1/q-1/p}.
	\end{equation*}
\end{corollary}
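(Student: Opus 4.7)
The corollary is essentially a one-line consequence of what is already available, so the plan is rather short. Two pieces fit together.

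The plan is as follows. First, the upper bound for $a_n$ is exactly the content of Proposition~\ref{prop:approximation-upper-triangle-upper-bd}, which provides
\[
a_n\big(\Sc_p^N\hookrightarrow\Sc_q^N\big) \lesssim_{p,q} \max\Bigl\{1,\tfrac{N^2-n+1}{N}\Bigr\}^{1/q-1/p}
\]
in the full range $0<q\leq p\leq\infty$. Second, since approximation numbers dominate Gelfand numbers in general (the inequality $a_n(T)\geq c_n(T)$ recorded in Section~\ref{sec:prelim} holds for operators between quasi-Banach spaces, which is the setting needed here), one obtains the matching lower bound from the known asymptotics for the Gelfand numbers. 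Specifically, the first case of \cite[Theorem~A]{HPV2020} (reproduced in Subsection~\ref{subsec:known results Gelfand}) states that for $0<q\leq p\leq\infty$,
\[
c_n\big(\Sc_p^N\hookrightarrow\Sc_q^N\big) \asymp_{p,q} \max\Bigl\{1,\tfrac{N^2-n+1}{N}\Bigr\}^{1/q-1/p},
\]
so in particular
\[
a_n\big(\Sc_p^N\hookrightarrow\Sc_q^N\big) \geq c_n\big(\Sc_p^N\hookrightarrow\Sc_q^N\big) \gtrsim_{p,q} \max\Bigl\{1,\tfrac{N^2-n+1}{N}\Bigr\}^{1/q-1/p}.
\]

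Combining both bounds and using $a_n\geq c_n$ once more to pin the Gelfand number asymptotics between the two sides yields the claimed chain $a_n\asymp_{p,q} c_n \asymp_{p,q} \max\{1,(N^2-n+1)/N\}^{1/q-1/p}$. There is no real obstacle: the entire nontrivial content sits in Proposition~\ref{prop:approximation-upper-triangle-upper-bd} (which is proved by restricting to the block-sparse subspace of matrices supported on a small set of singular directions, exactly as for the Gelfand upper bound in \cite{HPV2020}) and in the cited Gelfand asymptotic from \cite{HPV2020}. The corollary itself is just the formal combination of these two inputs.
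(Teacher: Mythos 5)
Your argument is exactly the one the paper has in mind: the upper bound is Proposition~\ref{prop:approximation-upper-triangle-upper-bd} and the lower bound is $a_n\geq c_n$ combined with the Gelfand asymptotics for $0<q\leq p\leq\infty$ from \cite[Theorem~A]{HPV2020}, which is precisely what the paper means by ``By the results of \cite{HPV2020} this immediately yields the following corollary.'' Nothing further is needed.
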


\begin{proof}[Proof of  Proposition~\ref{prop:approximation-upper-triangle-upper-bd}]
	We follow the proof of  \cite[Proposition~3.1]{HPV2020}.
	Let $M$ be the linear subspace of all $N\times N$ real matrices with all entries in the first $k$ columns equal to $0$.
	Since all matrices in $M$ have rank at most $N-k$, the number of their nonzero singular values is at most $N-k$.
	Thus H\"older's inequality implies that 
	\begin{equation}
		\label{eq:Holder-zero-rows}
		\|B\|_{\Sc_q} \leq (N-k)^{1/q-1/p} \|B\|_{\Sc_p}  
	\end{equation}
	for any matrix $B\in M$.
	
	Let $P\colon \Sc^N_2\to  \Sc^N_2$ be the orthogonal projection on $M^\perp$.
	Then $\rank P = \codim M = k N$ and $\id - P$ is the orthogonal projection onto $M$.
	For any $A\in \Sc^N_p$ we have
	\[
	\| (\id - P) A \|_{\Sc_q} \leq (N-k)^{1/q-1/p}  \| (\id - P) A \|_{\Sc_p} \leq (N-k)^{1/q-1/p}  \| A \|_{\Sc_p},
	\]
	where we first used inequality~\eqref{eq:Holder-zero-rows} and then the fact that deleting columns (or, equivalently, replacing them with zeros) decreases the singular values, see \cite[Corollary~7.3.6]{MR2978290}.
	Hence,
	\[
	a_n\big(\Sc_p^N\hookrightarrow\Sc_q^N\big)
	\leq (N-k)^{1/q-1/p}
	\]
	whenever $k N < n$. 
	Choosing $k=\left\lfloor \frac{n-1}{N} \right\rfloor$, we have $N-k = \left\lceil \frac{N^2-n+1}{N} \right\rceil$ and obtain
	\[
	a_n\big(\Sc_p^N\hookrightarrow \Sc_q^N\big)
	\leq \left\lceil \frac{N^2-n+1}{N} \right\rceil^{1/q-1/p}.
	\]
	This easily translates into the claimed estimate.
\end{proof}

Let us now comment on Kolmogorov numbers in the case $0 < q \leq p\leq \infty$.
Clearly, if  $1\leq q \leq p \leq \infty$, then $1\leq p^* \leq q^* \leq \infty$ and by duality
\[
d_n\big(\Sc_p^N \hookrightarrow \Sc_q^N\big)  = c_n\big(\Sc_{q^*}^N \hookrightarrow \Sc_{p^*}^N\big) \asymp_{p,q} \max\Bigl\{ 1, \frac{N^2-n+1}{N}\Bigr\}^{1/q-1/p}.
\]
The case $0<q<1$, $q\leq p$ is somewhat more delicate, even for embeddings  $\ell_p^N\hookrightarrow \ell_q^N$. 
Apart from the upper bound
\[
d_n\big(\Sc_p^N \hookrightarrow \Sc_q^N\big)  \leq a_n\big(\Sc_{p}^N \hookrightarrow \Sc_{q}^N\big) \asymp_{p,q} \max\Bigl\{ 1, \frac{N^2-n+1}{N}\Bigr\}^{1/q-1/p},
\]
we have the following analogue of the results obtained by Vyb\'iral in \cite[Section 4]{MR2432104}.

\begin{proposition}
	\label{prop:kolmogorov-upper-triangle}
	Let $0 < q \leq p\leq \infty$ and assume that $n,N\in \N$ with $1\leq n \leq N^2$.
    If $1\leq q \leq p \leq \infty$, then
	\begin{equation*}
		d_n\big(\Sc_p^N \hookrightarrow \Sc_q^N\big) 
		\asymp_{p,q}
			\max\Bigl\{ 1, \frac{N^2-n+1}{N}\Bigr\}^{1/q-1/p}
	\end{equation*}
	(and the upper bound carries over to all $0< q \leq p \leq \infty$).
Moreover, if $0<q<1$ and $q\leq p$, then there exists a constant $c(p,q)\in(0,2]$ such that
	\begin{equation*}
	d_{\lfloor c(p,q)N^2\rfloor+1}\big(\Sc_p^{2N} \hookrightarrow \Sc_q^{2N}\big) 
	\gtrsim_{p,q} N^{1/q-1/p}.
\end{equation*}
\end{proposition}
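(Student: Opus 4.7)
The Banach-space range $1 \leq q \leq p \leq \infty$ is essentially a consequence of earlier results. The upper bound follows immediately from $d_n(T) \leq a_n(T)$ combined with Proposition~\ref{prop:approximation-upper-triangle-upper-bd}. For the lower bound, I would invoke the duality $d_n(\Sc_p^N \hookrightarrow \Sc_q^N) = c_n(\Sc_{q^*}^N \hookrightarrow \Sc_{p^*}^N)$ valid for compact operators between Banach spaces; since $1 \leq q \leq p \leq \infty$ implies $1 \leq p^* \leq q^* \leq \infty$ and $1/p^* - 1/q^* = 1/q - 1/p$, the required asymptotic reduces to the Gelfand-number estimate for the regime $p^* \leq q^*$ established in \cite[Theorem A]{HPV2020}. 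The upper bound in the quasi-Banach case $0 < q < 1$, $q \leq p \leq \infty$, is likewise a direct consequence of $d_n \leq a_n$ and Proposition~\ref{prop:approximation-upper-triangle-upper-bd}, so the only new ingredient to provide is the quasi-Banach lower bound.

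The plan for that lower bound is a packing argument in $B_p^{2N}$. For $U \in O(2N)$, the scaled orthogonal matrix $A_U := (2N)^{-1/p} U$ lies in $B_p^{2N}$ and satisfies $\|A_U\|_{\Sc_q} = (2N)^{1/q - 1/p}$. I would construct a subset $\mathcal{U} \subset O(2N)$ with $|\mathcal{U}| \geq \exp(c_0 N^2)$ whose elements are pairwise separated by at least $c_q N^{1/q}$ in the $\Sc_q$-quasi-norm, so that the corresponding family $\{A_U : U \in \mathcal{U}\} \subset B_p^{2N}$ is pairwise $\Sc_q$-separated by $\gtrsim_{p,q} N^{1/q - 1/p}$.

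The construction of $\mathcal{U}$ relies on Milman's concentration of measure on the orthogonal group: the map $U \mapsto \|U - V_0\|_{\Sc_2}$ is $1$-Lipschitz in the Hilbert--Schmidt metric with median of order $\sqrt{N}$, and concentration yields
\[
\mathrm{Haar}\bigl(\{U \in O(2N) : \|U - V_0\|_{\Sc_2} < \tfrac{1}{2}\sqrt{N}\}\bigr) \leq C_1 \exp(-C_2 N^2).
\]
A greedy extraction then produces $\mathcal{U}$ with $|\mathcal{U}| \geq \exp(c_0 N^2)$, pairwise $\Sc_2$-separated by $\sqrt{N}/2$. To upgrade this to $\Sc_q$-separation, I use the crude bound $\sigma_i(U - V) \leq 2$ for $U, V \in O(2N)$: since $q < 2$, this gives $\|U - V\|_{\Sc_q}^q \geq 2^{q-2} \|U - V\|_{\Sc_2}^2$ and hence pairwise $\Sc_q$-separation of order $N^{1/q}$.

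The proof is closed by a volume/covering contradiction. Assume $d_n(\Sc_p^{2N} \hookrightarrow \Sc_q^{2N}) < \epsilon$ with $\epsilon$ a sufficiently small $(p,q)$-constant times $N^{1/q - 1/p}$, and let $E \subset \Sc_q^{2N}$ be an approximating subspace of dimension less than $n$, with approximants $B_U \in E$ satisfying $\|A_U - B_U\|_{\Sc_q} < \epsilon$. The $q$-subadditivity of $\|\cdot\|_{\Sc_q}^q$ gives $\|B_U\|_{\Sc_q} \leq R$ for some $R \asymp_q N^{1/q - 1/p}$ and forces $\|B_U - B_V\|_{\Sc_q} \gtrsim_{p,q} N^{1/q - 1/p}$ for distinct $U, V \in \mathcal{U}$. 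A standard covering-number estimate for quasi-balls in finite-dimensional quasi-Banach spaces then bounds the number of such separated points by $C_q^{\dim E}$, yielding $\exp(c_0 N^2) \leq |\mathcal{U}| \leq C_q^{n-1}$. Choosing $c(p, q) := \min\{2,\, c_0 / (2 \log C_q)\} \in (0, 2]$ produces the desired contradiction for $n = \lfloor c(p, q) N^2 \rfloor + 1$. The main technical obstacle is the packing step: one needs a net in $O(2N)$ whose logarithmic cardinality is on the order of $N^2$, matching the dimension of the Lie group itself, and this is exactly the strength provided by Milman's Gaussian-rate concentration $\exp(-c N t^2)$ as opposed to a merely exponential tail bound.
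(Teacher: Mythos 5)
Your treatment of the Banach-space range $1\leq q\leq p\leq\infty$ and the upper bound for $0<q<1$ matches the paper exactly (duality $d_n=c_n$ of the adjoint, and $d_n\leq a_n$). The genuine divergence is in the quasi-Banach lower bound, where you propose a direct packing argument: scaled orthogonal matrices $(2N)^{-1/p}U$ in $B_p^{2N}$, pairwise separated in $\Sc_q$ via Gaussian-rate concentration on the orthogonal group, followed by a volume-counting contradiction against any approximating subspace of dimension $\lesssim N^2$. This is correct. I verified the key steps: $\|A_U\|_{\Sc_q}=(2N)^{1/q-1/p}$; the median of $U\mapsto\|U-V_0\|_{\Sc_2}$ under Haar measure is $\asymp\sqrt{N}$ since $\|U-V_0\|_{\Sc_2}^2=4N-2\operatorname{tr}(V_0^TU)$ and $\operatorname{tr}$ has bounded variance; Milman's concentration with rate $\exp(-cNt^2)$ in the HS metric then yields a $\sqrt{N}/2$-separated net of cardinality $\exp(c_0N^2)$; since $\sigma_i(U-V)\le2$, separation in $\Sc_2$ upgrades to $\|U-V\|_{\Sc_q}\gtrsim N^{1/q}$ for $q<2$; the quasi-triangle inequality and a volume bound in a $(\dim E)$-dimensional quasi-normed space close the argument. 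One should also dispatch the finitely many small $N$ (where the inequality $\lfloor c(p,q)N^2\rfloor+1\le c_0N^2/\log C$ might fail) by absorbing them into the implicit constant, using that $\lfloor c(p,q)N^2\rfloor+1\le 2N^2+1\le 4N^2$ keeps $d$ strictly positive.

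The paper's route is quite different and shorter given its available toolbox: it invokes Carl's inequality for quasi-Banach spaces to compare $\sup_{k\leq 2N^2}k^\alpha e_k$ with $\sup_{k\leq 2N^2}k^\alpha d_k$, lower-bounds the former by $\gtrsim N^{2\alpha}N^{1/q-1/p}$ using the entropy number asymptotics of Hinrichs--Prochno--Vyb\'iral, and upper-bounds the latter by $k_N^\alpha N^{1/q-1/p}$ at the maximizing index $k_N$; comparison forces $k_N\gtrsim N^2$, which is the assertion. Your argument trades Carl's inequality and the imported entropy estimates for an explicit and self-contained packing, essentially re-deriving the relevant piece of the entropy lower bound from scratch. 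Both are sound; the paper's is the economical choice when the entropy results are already at hand, while yours has the virtue of being elementary modulo concentration on $O(2N)$ and of exhibiting the extremal configuration directly.
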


\begin{remark}
If we knew that for some $0< q_0 < p_0 \leq \infty$ such that $0<q_0<1$ the lower bound 
\[
d_n\big(\Sc_{p_0}^N \hookrightarrow \Sc_{q_0}^N\big) 
\gtrsim_{p_0,q_0}
\max\Bigl\{ 1, \frac{N^2-n+1}{N}\Bigr\}^{1/q_0-1/p_0}
\]
holds, 
then the lower bound 
 \[
 d_n\big(\Sc_{p}^N \hookrightarrow \Sc_{q_0}^N\big) 
 \gtrsim_{p,p_0, q_0}
 \max\Bigl\{ 1, \frac{N^2-n+1}{N}\Bigr\}^{1/q_0-1/p}
 \]
 for all  $ p \geq p_0$ would follow from the  interpolation inequality \eqref{eq:Kolmogorov-interpolation}.
 \end{remark}

\begin{proof}[Proof of  Proposition~\ref{prop:kolmogorov-upper-triangle}]
	As mentioned above, if $1\leq q \leq p \leq \infty$, then the result follows by duality (and the upper bound is alway true by comparison with the approximation numbers) . Let $0<q<1$ and $q\leq p$. By Carl's inequality for quasi-Banach spaces \cite{MR1339817},
	\[
	\sup_{k\leq 2N^2} k^\alpha e_k\big(\Sc_p^{2N} \hookrightarrow \Sc_q^{2N}\big)
	 \lesssim_{\alpha, p, q} 
	 \sup_{k\leq 2N^2} k^\alpha d_k\big(\Sc_p^{2N} \hookrightarrow \Sc_q^{2N}\big). 
	\]
    By the estimates of entropy numbers of embeddings of Schatten classes from \cite{HPV2017}, the left-hand side can be bounded as follows,
    \[
\sup_{k\leq 2N^2} k^\alpha e_k\big(\Sc_p^{2N} \hookrightarrow \Sc_q^{2N}\big)
\geq (2N^2)^{\alpha} e_{2N^2}\big(\Sc_p^{2N} \hookrightarrow \Sc_q^{2N}\big) 
\gtrsim_{\alpha,p,q} N^{2\alpha} N^{1/q-1/p}.
\]
As for the right-hand side, there exists $1\leq k_N \leq 2N^2$ such that
\[
	 \sup_{k\leq 2N^2} k^\alpha d_k\big(\Sc_p^{2N} \hookrightarrow \Sc_q^{2N}\big) 
	 = k_N^\alpha d_{k_N}\big(\Sc_p^{2N} \hookrightarrow \Sc_q^{2N}\big)
	  \leq k_N^\alpha (2N)^{1/q-1/p}.
\]
Together this yields $N^{2\alpha} N^{1/q-1/p} \lesssim_{\alpha,p,q} k_N^\alpha d_{k_N}\big(\Sc_p^{2N} \hookrightarrow \Sc_q^{2N}\big)\lesssim_{\alpha,p,q} k_N^\alpha N^{1/q-1/p}$.
Since this holds for any $N\in\N$, we conclude that there exists a constant $c(p,q)\in (0,2]$ such that $ c(p,q)N^2 \leq k_N \leq 2N^2$.
This implies the assertion.
\end{proof}

\section{Approximation numbers in the case $1\leq p\leq 2 \leq q  \leq \infty$} 
\label{sec:approximation-square}

\subsection{Notation}

For $1  \leq p \leq 2 \leq q \leq \infty$ let us introduce the two quantities
\begin{equation*}
\critmax_{p,q} := \max\bigl\{ 3-2/p,1+2/q\bigr\}\qquad\text{and}\qquad
\critmin_{p,q}:= \min\bigl\{ 3-2/p,1+2/q\bigr\}.
\end{equation*}
Note that 
in this case 
we have $1\leq \critmin_{p,q} \leq \critmax_{p,q} \leq 2$ and that $\critmax_{p,q}=\critmin_{p,q} = 3-2/p$ whenever $1/p+1/q=1$.
In what follows, we shall frequently use the relations $1+2/p^* = 3-2/p$ and $1/q-1/p = 1/p^* - 1/q^*$.

Our conjecture  is that for $1\leq p\leq 2 \leq q  \leq \infty$ we have
\begin{align*}
a_n\big(\Sc_p^N \hookrightarrow \Sc_q^N\big) 
&\asymp_{p,q}
\begin{cases}
\min\Bigl\{ 1, \frac{N^{\critmax_{p,q}/2}}{n^{1/2}} \Bigr\}
          &:\  1 \leq n \leq (1-\cHPV ) N^2,\\
N^{\critmax_{p,q}/2-2} \sqrt{N^2-n}
          &:\ (1-\cHPV ) N^2 \leq n\leq N^2 -  \cHPV  N^{\critmin_{p,q}}+1,\\
N^{1/q-1/p} 
          &:\ N^2 -\cHPV  N^{\critmin_{p,q}} +1\leq n \leq N^2
\end{cases}
\end{align*}
(recall that $\cHPV \in (0,1)$ stands for the universal constant from \cite[Lemma 2.5]{HPV2020}).
We stress that the essential case is the one where $1/p+1/q =1$, since the upper bound in other cases can be obtained by factorization and comparing the value of $a_n(\Sc_p^N \hookrightarrow \Sc_{q}^N) $ with $a_n(\Sc_r^N \hookrightarrow \Sc_{r^*}^N)$ for some appropriately chosen $r$. 

The main result of this section is Proposition~\ref{prop:approximation-square-upper-bd-large-n}, which removes the logarithms from the estimate of  Gordon, K\"onig, and Sch\"utt  \cite{GKS1987} in the case when $n$ is large.

\subsection{Lower bound}

\begin{proposition}
\label{prop:approximation-square-lower-bd}
Let $1\leq p\leq 2\leq q \leq \infty$ and assume that $n,N\in \N$ with $1\leq n \leq N^2$.
Then
\begin{align*}
a_n\big(\Sc_p^N \hookrightarrow \Sc_q^N\big) 
&\gtrsim
\begin{cases}
\min\Bigl\{ 1, \frac{N^{\critmax_{p,q}/2}}{n^{1/2}} \Bigr\}
          &:\  1 \leq n \leq (1-\cHPV) N^2,\\
N^{\critmax_{p,q}/2-2} \sqrt{N^2-n}
          &:\ (1-\cHPV) N^2 \leq n\leq N^2 -  \cHPV N^{\critmin_{p,q}}+1,\\
N^{1/q-1/p} 
          &:\ N^2 - \cHPV N^{\critmin_{p,q}} +1\leq n \leq N^2,
\end{cases}
\end{align*}
where $\cHPV \in (0,1)$ is the universal constant from \cite[Lemma 2.5]{HPV2020}. 
The implicit constant in the estimate is universal and does not depend on $p$ and $q$.
\end{proposition}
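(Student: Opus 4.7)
The plan is to derive the lower bound from the elementary inequalities
\[
a_n(T) \geq \max\bigl\{c_n(T),\, d_n(T)\bigr\}
\qquad \text{and} \qquad
d_n\bigl(\Sc_p^N \hookrightarrow \Sc_q^N\bigr) = c_n\bigl(\Sc_{q^*}^N \hookrightarrow \Sc_{p^*}^N\bigr),
\]
the latter being the standard duality relation, valid here since we are working with compact operators between finite-dimensional Banach spaces. Since the involution $(p,q)\mapsto (q^*,p^*)$ preserves the hypothesis $1 \leq p \leq 2 \leq q \leq \infty$, every Gelfand lower bound proved in Section~\ref{sec:Gelfand} automatically yields a matching Kolmogorov lower bound. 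Crucially, Propositions~\ref{prop:Gelfand-missing-intermediate-regime} and~\ref{prop:Gelfand-cleaner-small-regime} deliver \emph{universal} constants, so the resulting lower bound on $a_n$ is also universal, as required by the statement.

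In the small regime $1\leq n\leq (1-\cHPV)N^2$, Proposition~\ref{prop:Gelfand-cleaner-small-regime} applied to $(p,q)$ and to $(q^*,p^*)$ gives
\[
c_n\bigl(\Sc_p^N \hookrightarrow \Sc_q^N\bigr) \gtrsim \min\Bigl\{1,\frac{N^{3/2-1/p}}{n^{1/2}}\Bigr\}
\quad\text{and}\quad
d_n\bigl(\Sc_p^N \hookrightarrow \Sc_q^N\bigr) \gtrsim \min\Bigl\{1,\frac{N^{3/2-1/q^*}}{n^{1/2}}\Bigr\} = \min\Bigl\{1,\frac{N^{1/2+1/q}}{n^{1/2}}\Bigr\},
\]
whose maximum equals $\min\{1, N^{\critmax_{p,q}/2}/n^{1/2}\}$ since $\critmax_{p,q}/2 = \max\{3/2-1/p,\, 1/2+1/q\}$. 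In the large regime $n \geq N^2 - \cHPV N^{\critmin_{p,q}}+1$, I would use the completely elementary observation that for any nonzero $A \in \Sc_p^N$, Hölder's inequality applied to the singular values gives $\|A\|_{\Sc_p} \leq (\rank A)^{1/p-1/q}\|A\|_{\Sc_q} \leq N^{1/p-1/q}\|A\|_{\Sc_q}$, and hence $c_n\bigl(\Sc_p^N \hookrightarrow \Sc_q^N\bigr) \geq N^{1/q-1/p}$ for every $n \leq N^2$ with constant exactly $1$.

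The hard part will be the intermediate regime. Proposition~\ref{prop:Gelfand-missing-intermediate-regime} applied to $(p,q)$ gives $c_n \gtrsim N^{-1/2-1/p}\sqrt{N^2-n+1}$, valid for $n \leq N^2 - \cHPV N^{1+2/q}+1$, while applied to $(q^*,p^*)$ it gives $d_n \gtrsim N^{-3/2+1/q}\sqrt{N^2-n+1}$, valid for $n \leq N^2 - \cHPV N^{1+2/p^*}+1 = N^2 - \cHPV N^{3-2/p}+1$. Observing that $\critmax_{p,q}/2 - 2 = \max\{-1/2-1/p,\,-3/2+1/q\}$ while $\critmin_{p,q}$ equals the smaller of $\{3-2/p,\, 1+2/q\}$, a case distinction on the sign of $1/p+1/q-1$ reveals that in each case precisely one of the two bounds attains the target exponent $\critmax_{p,q}/2-2$ \emph{and} is simultaneously the bound whose range of validity extends all the way up to $n \leq N^2 - \cHPV N^{\critmin_{p,q}}+1$. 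The principal bookkeeping task is verifying this matching between exponents and ranges; once done, taking the maximum of $c_n$ and $d_n$ concludes the argument. Note that the boundary cases $p=1$ and $q=\infty$, which have gaps in the Gelfand asymptotics of \cite{HPV2020}, cause no difficulty here since Propositions~\ref{prop:Gelfand-missing-intermediate-regime} and~\ref{prop:Gelfand-cleaner-small-regime} apply uniformly throughout the closed range $1 \leq p \leq 2 \leq q \leq \infty$.
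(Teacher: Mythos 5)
Your proposal is correct and follows essentially the same route as the paper: lower-bound $a_n$ by $\max\{c_n,d_n\}$, convert $d_n$ to a Gelfand number via duality, invoke Propositions~\ref{prop:Gelfand-cleaner-small-regime} and~\ref{prop:Gelfand-missing-intermediate-regime} together with the trivial bound $c_n\geq c_{N^2}=N^{1/q-1/p}$, and observe that the definitions of $\critmax_{p,q}$ and $\critmin_{p,q}$ carry out the case distinction. The paper states this in two lines without unpacking the exponent/range bookkeeping; your more explicit verification of which of the two bounds is operative on which side of $1/p+1/q=1$ is accurate.
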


\begin{proof}
We have
\[
a_n\big(\Sc_p^N \hookrightarrow \Sc_q^N\big) 
\geq \max\bigl\{ c_n\big(\Sc_p^N \hookrightarrow \Sc_q^N\big) , d_n\big(\Sc_p^N \hookrightarrow \Sc_q^N\big) 
 \bigr\}
= \max\bigl\{ c_n\big(\Sc_p^N \hookrightarrow \Sc_q^N\big) , c_n\big(\Sc_{q^*}^N \hookrightarrow \Sc_{p^*}^N\big) 
 \bigr\}.
 \]
  The result follows immediately from Propositions~\ref{prop:Gelfand-missing-intermediate-regime}, \ref{prop:Gelfand-cleaner-small-regime}, the definitons of $\critmax_{p,q}$, $\critmin_{p,q}$, and the trivial estimate
  \[
  c_n\big(\Sc_p^N \hookrightarrow \Sc_q^N\big) 
  \geq c_{N^2}\big(\Sc_p^N \hookrightarrow \Sc_q^N\big) 
   = \frac{1}{\|\Sc_q^N \hookrightarrow \Sc_p^N\| } = N^{1/q-1/p}.
  \qedhere
  \]
  \end{proof}

\subsection{Upper bound for small $n$}

\begin{proposition}
\label{prop:approximation-square-upper-bd-small-n}
Let $1\leq p\leq 2\leq q \leq \infty$ and assume that $n,N\in \N$ with $1\leq n \leq N^2$.
If $1 \leq n\leq (1-\cHPV)N^2$, 
where $\cHPV \in (0,1)$ is the universal constant from \cite[Lemma 2.5]{HPV2020},
then
\begin{equation*}
a_n\big(\Sc_p^N \hookrightarrow \Sc_{q}^N\big) 
\lesssim_{p,q}
\frac{N^{\critmax_{p,q}/2}}{n^{1/2}}.
\end{equation*}
\end{proposition}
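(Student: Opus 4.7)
The plan is to reduce the statement to the diagonal case $1/p+1/q=1$ by a short factorization argument, and then to invoke the Gordon--K\"onig--Sch\"utt estimate on the diagonal, extending its range of validity from $n\leq \lfloor N^2/2\rfloor$ to $n\leq (1-\cHPV)N^2$ by monotonicity.

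Set $\tau\coloneqq 1/p+1/q$. If $\tau>1$, then $1\leq p\leq q^*\leq 2$, so by Lemma~\ref{lem:norm} the natural inclusion $\Sc_p^N\hookrightarrow \Sc_{q^*}^N$ has operator norm $1$, and the multiplicativity property $(S_3)$ of the approximation numbers yields
\[
a_n\bigl(\Sc_p^N\hookrightarrow \Sc_q^N\bigr)\leq a_n\bigl(\Sc_{q^*}^N\hookrightarrow \Sc_q^N\bigr).
\]
If $\tau<1$, then $2\leq p^*\leq q$, so $\|\Sc_{p^*}^N\hookrightarrow \Sc_q^N\|=1$ and
\[
a_n\bigl(\Sc_p^N\hookrightarrow \Sc_q^N\bigr)\leq a_n\bigl(\Sc_p^N\hookrightarrow \Sc_{p^*}^N\bigr).
\]
In both subcases the right-hand side is $a_n(\Sc_r^N\hookrightarrow \Sc_{r^*}^N)$ for some $r\in[1,2]$ (namely $r=q^*$ if $\tau>1$ and $r=p$ if $\tau<1$), and one checks that $3-2/r=\critmax_{p,q}$ in both cases: $\tau>1$ forces $\critmax_{p,q}=1+2/q=3-2/q^*$, whereas $\tau<1$ forces $\critmax_{p,q}=3-2/p$. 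The case $\tau=1$ is trivially diagonal. Hence it suffices to prove, for every $r\in[1,2]$ and every $1\leq n\leq (1-\cHPV)N^2$, that $a_n(\Sc_r^N\hookrightarrow \Sc_{r^*}^N)\lesssim_r N^{(3-2/r)/2}/\sqrt n$.

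For $1\leq n\leq \lfloor N^2/2\rfloor$ this is the content of Gordon--K\"onig--Sch\"utt~\cite[Proposition~3.7]{GKS1987} (together with the explicit asymptotics for $a_n(\Sc_1^N\hookrightarrow \Sc_\infty^N)$ in the endpoint $r=1$ recalled in the introduction, and the trivial bound $a_n\leq 1$ for $r=2$): both regimes recorded there are dominated by $N^{(3-2/r)/2}/\sqrt n$, since the constant regime occurs only on $n\leq N^{3-2/r}$, where already $N^{(3-2/r)/2}/\sqrt n\geq 1$. For the remaining range $\lfloor N^2/2\rfloor< n\leq (1-\cHPV)N^2$, the monotonicity of the approximation numbers gives
\[
a_n\bigl(\Sc_r^N\hookrightarrow \Sc_{r^*}^N\bigr)\leq a_{\lfloor N^2/2\rfloor}\bigl(\Sc_r^N\hookrightarrow \Sc_{r^*}^N\bigr)\lesssim_r N^{(3-2/r)/2-1},
\]
and since $\sqrt n\asymp_{\cHPV} N$ in this range, the right-hand side is $\asymp_{\cHPV} N^{(3-2/r)/2}/\sqrt n$, as required.

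The only nontrivial ingredient above is the diagonal bound of Gordon--K\"onig--Sch\"utt on $n\leq N^2/2$, which is a genuinely probabilistic statement proved by Gaussian averaging / dual Sudakov-type arguments applied to $\Sc_r^N\hookrightarrow \Sc_{r^*}^N$; once it is available as a black box, the factorization and monotonicity steps are routine.
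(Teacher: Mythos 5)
Your proof is correct and follows essentially the same route as the paper: factor through a diagonal pair $\Sc_r^N\hookrightarrow\Sc_{r^*}^N$ and invoke the Gordon--K\"onig--Sch\"utt estimate \cite[Propositions~3.7/3.8]{GKS1987}, extending the range from $N^2/2$ to $(1-\cHPV)N^2$. The only cosmetic difference is that instead of reducing by duality to $1/p+1/q\le1$ and factoring through $\Sc_{q^*}^N$ (picking up a factor $N^{1/q^*-1/p}$), you factor through whichever of $\Sc_{q^*}^N$ or $\Sc_{p^*}^N$ makes the intermediate embedding have norm $1$, which keeps the diagonal exponent at $\critmax_{p,q}$ directly; both bookkeepings land on the same bound.
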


\begin{proof}
By duality, without loss of generality, we may and do assume that $1\leq p\leq 2 \leq q \leq \infty$ satisfy $1/p+1/q\leq 1$ (equivalently, $1/p \leq 1/q^*$, or $1+2/q \leq 3-2/p$, or $\critmax_{p,q}=3-2/p$).
By factorization and \cite[Proposition~3.7]{GKS1987} (or \cite[Proposition~3.8]{GKS1987} if $q=\infty$; note that instead of the range $1\leq n\leq N^2/2$ given in the formulations of those results we prefer to use the range $1\leq n\leq (1-c)N^2$, but the statements obviously remain valid),
\begin{align*}
a_n\big(\Sc_p^N \hookrightarrow \Sc_q^N\big) 
& \leq \| \Sc_p^N \hookrightarrow \Sc_{q^*}^N  \| a_n\big(\Sc_{q^*}^N \hookrightarrow \Sc_q^N\big)  \\
&\lesssim_q N^{1/q^* - 1/p} \min\Bigl\{ 1, \frac{N^{3/2-1/q^*}}{n^{1/2}} \Bigr\} \\
& =  \min\Bigl\{ N^{1/q^* - 1/p}, \frac{N^{3/2-1/p}}{n^{1/2}} \Bigr\}.
\end{align*}
Moreover, we have the trivial upper bound $a_n\big(\Sc_p^N \hookrightarrow \Sc_q^N\big)\leq \| \Sc_p^N \hookrightarrow \Sc_{q}^N  \|=1 $.
This yields the assertion.
\end{proof}

\subsection{Upper bound for large $n$}

The next proposition shows how one can remove the logarithm from the estimate of  Gordon, K\"onig, and Sch\"utt  \cite[Proposition~3.7]{GKS1987} in the case when $n$ is large.

\begin{proposition}
\label{prop:approximation-square-upper-bd-large-n}
Let $1\leq p\leq 2$ and assume that $n,N\in \N$ with $1\leq n \leq N^2$.
If $N^2 - c N^{3-2/p} +1\leq n \leq N^2$,
where $\cHPV \in (0,1)$ is the universal constant from \cite[Lemma 2.5]{HPV2020},
 then
\begin{align*}
a_n\big(\Sc_p^N \hookrightarrow \Sc_{p^*}^N\big) 
\lesssim_{p}
N^{1-2/p}.
\end{align*}
Consequently, if $1\leq p \leq 2 \leq q \leq \infty$, then
\begin{align*}
a_n\big(\Sc_p^N \hookrightarrow \Sc_{q}^N\big) 
\lesssim_{p,q}
\begin{cases}
N^{\critmax_{p,q}/2-2} \sqrt{N^2-n+1}
          &:\, N^2 - \cHPV  N^{\critmax_{p,q}} +1 \leq n\leq N^2 -  \cHPV N^{\critmin_{p,q}}+1,\\
N^{1/q-1/p} 
          &:\,  N^2 - \cHPV  N^{\critmin_{p,q}} +1\leq n \leq N^2.\end{cases}
\end{align*}
\end{proposition}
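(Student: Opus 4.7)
\textbf{Plan for Proposition~\ref{prop:approximation-square-upper-bd-large-n}.} The proof has two ingredients: first the ``diagonal'' estimate $a_n(\Sc_p^N\hookrightarrow\Sc_{p^*}^N)\lesssim_p N^{1-2/p}$, then the consequence for general $2\le q\le\infty$ by factorization through a diagonal embedding.

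\emph{The diagonal case.} Set $k:=N^{2}-n+1\le \cHPV N^{3-2/p}$; it suffices to display a rank-$(N^{2}-k)$ operator $T_{n}$ with $\|\id-T_{n}\|_{\Sc_p\to\Sc_{p^*}}\lesssim_p N^{1-2/p}$. I would take $T_{n}:=\id-Q$ where $Q$ is the orthogonal projection of $\R^{N\times N}$ (for the Hilbert--Schmidt inner product) onto a $k$-dimensional subspace $W\subset\R^{N\times N}$ to be chosen, so that the task reduces to $\|Q\|_{\Sc_p\to\Sc_{p^*}}\lesssim_p N^{1-2/p}$.

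The decisive algebraic step is a Cauchy--Schwarz identity exploiting that $Q$ is self-adjoint and idempotent for the trace-duality pairing $(\Sc_{p^*}^N)^{*}=\Sc_p^N$: writing $\langle QA,C\rangle=\langle A,QC\rangle=\langle QA,QC\rangle$ one obtains
\[
\|Q\|_{\Sc_p\to\Sc_{p^*}}
=\sup_{\|A\|_{\Sc_p},\,\|C\|_{\Sc_p}\le 1}|\langle QA,QC\rangle|
\le \|Q\|_{\Sc_p\to\Sc_{2}}^{\,2}
=\|Q\|_{\Sc_{2}\to\Sc_{p^*}}^{\,2},
\]
the last equality again using $Q^{*}=Q$. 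Since $Q$ is the HS-orthogonal projection onto $W$, this last quantity equals $\bigl(\sup_{B\in W,\,\|B\|_{\Sc_{2}}\le 1}\|B\|_{\Sc_{p^*}}\bigr)^{2}$. The problem is thereby reduced to exhibiting a subspace $W$ of dimension $k\le \cHPV N^{1+2/p^{*}}$ on which the Schatten $p^{*}$-norm is dominated by $C\cdot N^{1/p^{*}-1/2}$ times the Hilbert--Schmidt norm --- exactly the Dvoretzky regime for $\Sc_{p^{*}}^N$ (note $p^{*}\ge 2$). Such $W$ is supplied by the Figiel--Lindenstrauss--Milman theorem, of which Lemma~\ref{lem:Schatten-largest-ell-2-iso} is the qualitative shadow, in its quantitative form with the natural HS Euclidean structure; the L\'evy mean $\mathbb{E}\|G\|_{\Sc_{p^{*}}}\asymp N^{1/2+1/p^{*}}$ of a standard Gaussian matrix supplies the scaling $N^{1/p^{*}-1/2}$. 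Plugging back yields $\|Q\|_{\Sc_p\to\Sc_{p^*}}\le C^{2}N^{1-2/p}$.

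\emph{The consequence.} For $1\le p\le 2\le q\le\infty$, I factor the identity as
\[
\Sc_p^N\xrightarrow{I_{p,r}}\Sc_r^N\xrightarrow{T_{r,r^{*}}}\Sc_{r^{*}}^N\xrightarrow{I_{r^{*},q}}\Sc_q^N,\qquad r\in[1,2],
\]
and apply multiplicativity of $a_n$ together with the diagonal estimate and Lemma~\ref{lem:norm}: for $k\le \cHPV N^{3-2/r}$,
\[
a_{n}(\Sc_p^N\hookrightarrow\Sc_q^N)\lesssim_{r} \max\{1,N^{1/r-1/p}\}\cdot N^{1-2/r}\cdot \max\{1,N^{1/q-1/r^{*}}\}.
\]
For $k\le \cHPV N^{\critmin_{p,q}}$ I choose $r$ at the boundary of the admissible range (namely $r=p$ when $1/p+1/q\ge 1$, or $r=q^{*}$ otherwise), and elementary arithmetic collapses the product to $N^{1/q-1/p}$. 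For $\cHPV N^{\critmin_{p,q}}\le k\le \cHPV N^{\critmax_{p,q}}$ I instead solve $N^{3-2/r}\asymp k/\cHPV$ for $r$, which lands strictly inside $[\min(p,q^{*}),\max(p,q^{*})]$; substituting this $r$ and using the defining relations $\critmax_{p,q}/2-2=1/q-3/2$ (case $1/p+1/q\ge 1$) or $=-1/2-1/p$ (case $1/p+1/q\le 1$) simplifies the product to $N^{\critmax_{p,q}/2-2}\sqrt{k}$, as claimed.

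\emph{Where the difficulty lies.} The conceptual heart is the diagonal case. Gordon--K\"onig--Sch\"utt's logarithmic factor originates in a union bound over an $\varepsilon$-net of $B_{\Sc_p^N}$, and the Cauchy--Schwarz reduction $\|Q\|_{\Sc_p\to\Sc_{p^*}}\le \|Q\|_{\Sc_p\to\Sc_{2}}^{2}$ is what eliminates it: it converts the bilinear estimate on $Q$ into a \emph{one-sided} flatness estimate on $W$ that Dvoretzky's theorem hands over deterministically, so no chaining or covering arguments are needed. The factorization in the second step is then essentially bookkeeping.
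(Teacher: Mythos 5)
Your proposal is essentially the paper's proof, down to the decisive algebraic step. In the diagonal case you take $T_n=\id-Q$ with $Q$ an HS-orthogonal projection onto a $k$-dimensional subspace $W$ of the Dvoretzky type and bound $\|Q\|_{\Sc_p\to\Sc_{p^*}}\le\|Q\|_{\Sc_2\to\Sc_{p^*}}^2$; the paper does the same (its $P$ is your $Q$, its $L$ is your $W$), writing the bound as $\|P^2\|_{\Sc_p\to\Sc_{p^*}}\le\|P\|_{\Sc_p\to\Sc_2}\cdot\|P\|_{\Sc_2\to\Sc_{p^*}}=\|P\|_{\Sc_2\to\Sc_{p^*}}^2$, which is your Cauchy--Schwarz identity via $P=P^*=P^2$. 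Your factorization step and the Gluskin trick of choosing $r$ to absorb $k$ are the same as the paper's Step 2. The one place you should tighten is the source of the ``good'' subspace $W$: Lemma~\ref{lem:Schatten-largest-ell-2-iso} only gives $d(E,\ell_2^k)\le 2$, with no guarantee that the witnessing inner product is Hilbert--Schmidt, and your argument crucially needs the one-sided inequality $\|B\|_{\Sc_{p^*}}\lesssim N^{1/p^*-1/2}\|B\|_{\Sc_2}$ with the HS norm on the right, because $Q$ must be \emph{that} orthogonal projection. This is true --- the Dvoretzky argument for $\Sc_{p^*}$ naturally runs against the HS Euclidean structure, and this is exactly what \cite[Proposition~4.1]{HPV2020} delivers in the form of a Gelfand-number estimate for $\Sc_2^N\hookrightarrow\Sc_{p^*}^N$ --- but you should either invoke that black box (as the paper does) or state the quantitative FLM with the HS scaling explicitly, together with the monotonicity of the constant in $p^*$ which the paper needs to make the factorization step uniform.
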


\begin{proof}
 The proof will be divided into two steps.

\emph{Step 1.}
First, we consider the special case $q=p^*$. 
Let $N^2 - \cHPV  N^{3-2/p} +1\leq n \leq N^2$.
From the proof of \cite[Proposition 4.1]{HPV2020} 
(in the case of the embedding $\Sc_2^N \hookrightarrow \Sc_{p^*}^N$)
we know that there exists a subspace $L\subset \Sc_{p^*}^N$ with $\codim L <n$, such that
\[
c_n \big(\Sc_2^N \hookrightarrow \Sc_{p^*}^N\big) 
\leq \| \id |_L \colon L\subset \Sc_2^N \to \Sc_{p^*}^N\| \leq c_1(p^*) c_2 N^{1/p^* -1/2},
\]
where the function $r\mapsto c_1(r)\in(0,\infty)$ is monotone increasing in $r\in [2,\infty]$.
We also have
\[
 N^{1/p^* -1/2} \leq c_n \big(\Sc_2^N \hookrightarrow \Sc_{p^*}^N\big) = a_n \big(\Sc_2^N \hookrightarrow \Sc_{p^*}^N\big)
\]
(the Gelfand and approximation numbers are equal since the domain is a Hilbert space; the inequality follows from the trivial bound by $c_{N^2}(\Sc_2^N \hookrightarrow \Sc_{p^*}^N) $) and so
\begin{equation}
\label{eq:appr-2-p-prim-large}
 N^{1/p^* -1/2}
 \leq a_n \big(\Sc_2^N \hookrightarrow \Sc_{p^*}^N\big)
 \leq \| \id |_L \colon L\subset \Sc_2^N \to \Sc_{p^*}^N\| 
\lesssim_p N^{1/p^* -1/2},
\end{equation}
where the constant in the last inequality increases if $p^*$ increases.

Denote by $Q\colon \Sc_2^N \to \Sc_2^N$ the orthogonal projection onto $L^\perp \subset \Sc_2^N$ 
(we can algebraically treat $L$ as a subspace of $\Sc_2^N$;
similarly, we will consider $Q$ as a linear operator between different Schatten classes). 
Then $\rank(Q) < n$, $P = \id - Q$ is the orthogonal projection onto $L$, and  $P = P^2 = P^*$. 
Moreover,
\begin{equation}
\label{eq:gal}
\| \id - Q \colon \Sc_2^N \to \Sc_{p^*}^N\| = \| \id |_L \colon L\subset \Sc_2^N \to \Sc_{p^*}^N\|.
\end{equation}
Indeed, the inequality  $\| \id - Q \colon \Sc_2^N \to \Sc_{p^*}^N\| 
\geq  \| \id |_L \colon L\subset \Sc_2^N \to \Sc_{p^*}^N\|$ is obvious, since $\id - Q = \id|_L$ on $L$; 
on the other hand, since $ \|(\id - Q)A\|_{\Sc_{2}}\leq  \|A\|_{\Sc_{2}}$ for $A\in  \Sc_{2}^N$,
\begin{align*}
\| \id - Q \colon \Sc_2^N \to \Sc_{p^*}^N\| 
&= \sup \{ \| (\id - Q) A\|_{\Sc_{p^*}}  : \|A\|_{\Sc_{2}}\leq 1 \}   \\
&\leq \sup \{ \| (\id - Q) A\|_{\Sc_{p^*}}  : \|(\id - Q)A\|_{\Sc_{2}}\leq 1 \}   \\
&\leq \sup \{ \|B\|_{\Sc_{p^*}}  : \|B\|_{\Sc_{2}}\leq 1, \quad B\in L \}  \\
&=  \| \id |_L \colon L\subset \Sc_2^N \to \Sc_{p^*}^N\|.
\end{align*}

Thus (recall that $\rank(Q) < n$ and $\id - Q = P = P^* = P^2$),
\begin{align*}
a_n \big(\Sc_p^N \hookrightarrow \Sc_{p^*}^N\big)
 \leq \|  \id - Q \colon \Sc_p^N \to \Sc_{p^*}^N\| 
&= \|  P^2  \colon \Sc_p^N \to \Sc_{p^*}^N\| \\
&\leq \|  P  \colon \Sc_p^N \to \Sc_{2}^N\| \cdot \| P  \colon \Sc_2^N \to \Sc_{p^*}^N\| \\
&= \|P  \colon \Sc_2^N \to \Sc_{p^*}^N\|^2 =  \| \id - Q  \colon \Sc_2^N \to \Sc_{p^*}^N\|^2.
\end{align*}
This together with \eqref{eq:appr-2-p-prim-large}, \eqref{eq:gal} yields
\begin{equation*}
a_n \big(\Sc_p^N \hookrightarrow \Sc_{p^*}^N\big) \lesssim_p N^{2/p^* -1} = N^{1-2/p}
\end{equation*}
for $N^2 - \cHPV  N^{3-2/p} +1\leq n \leq N^2$ and ends the proof in Step 1. Note that the constant here increases if $p^*$ increases.

\emph{Step 2.}
Take now any $1\leq p < 2 < q \leq \infty$.
By duality we may and do assume that $1/p+1/q\leq 1$.
Equivalently, $1+2/q \leq 3-2/p$, i.e.,  $\critmax_{p,q}=3-2/p$ and $\critmin_{p,q} = 1+2/q$.
We split the reasoning into two substeps.

\emph{Step 2a.}
Let $N^2- \cHPV N^{1+2/q} +1 \leq n \leq N^2$.
By factorization,
\[
a_n\big(\Sc_p^N \hookrightarrow \Sc_q^N\big)  \leq \| \Sc_p^N \to \Sc_{q^*}^N  \| a_n\big(\Sc_{q^*}^N \hookrightarrow \Sc_q^N\big)  = N^{1/q^* - 1/p} a_n\big(\Sc_{q^*}^N \hookrightarrow \Sc_q^N\big).
\]
By the results of Step 1, $a_n(\Sc_{q^*}^N \hookrightarrow \Sc_q^N) \lesssim_q N^{1/q-1/q^*}$
 for $N^2- \cHPV N^{3 - 2/q^*} +1  \leq n \leq N^2$.
We conclude that for $ N^2- \cHPV N^{1+2/q} +1\leq n \leq N^2$,
\[
a_n\big(\Sc_p^N \hookrightarrow \Sc_q^N\big)  \lesssim_{q}  N^{1/q - 1/p}.
\]
Note that the constant here increases if $q$ increases.

\emph{Step 2b.}
Let now $N^2- \cHPV N^{3-2/p} +1 \leq n \leq N^2 - \cHPV N^{1+2/q}$.
As in the proof of \cite[Proposition~4.1]{HPV2020}, we shall apply a Gluskin-type trick and use the result of Step 2a for $a_n (\Sc_p^N \hookrightarrow \Sc_{s}^N)$ with an appropriately chosen value of $s$.
However, since we want to stay in the regime where $a_n(\Sc_p^N \hookrightarrow \Sc_{s}^N) \asymp c(\Sc_p^N \hookrightarrow \Sc_{s}^N)$ we get an additional constraint for $s$ which does not appear in the case of Gelfand numbers.
Hence, we do not obtain results for the whole intermediate range $N^2- \cHPV N^2 +1 \leq n \leq N^2 - \cHPV N^{1+2/q}$.

Recall that $3-2/p = 1+2/p^*$.
Therefore, we can choose $s\in[p^*,q]$ such that $n = N^2 - \cHPV N^{1+2/s}+1$.
By the results of Step 2a, 
\[
a_n\big(\Sc_p^N \hookrightarrow \Sc_q^N\big)  \leq a_n\big(\Sc_{p}^N \hookrightarrow \Sc_s^N\big)  \lesssim_{p,s} N^{1/s - 1/p} = N^{-1/2 - 1/p} \sqrt{N^2 - n+1}.
\]
Note that the constants depend on $p$ and $s$, but since $s\leq q$ and the constants in Step 1 and 2 were appropriately monotone, the constant here depends in fact only on $p$ and $q$.
\end{proof}

\subsection{Intermediate $n$}

In the case $1\leq p\leq 2\leq q=p^* \leq \infty$, in the intermediate range $(1-\cHPV) N^2 \leq n \leq N^2-\cHPV N^{3-2/p} +1$, there are some  logarithmic factors in the estimate of  Gordon, K\"onig, and Sch\"utt  \cite[Proposition~3.7]{GKS1987}.

\begin{proposition}
	\label{prop:approximation-square-upper-bd-intermediate-n}
	Let $1\leq p\leq 2$ and assume that $n,N\in \N$ with $1\leq n \leq N^2$.
	If $(1-\cHPV )N^2 \leq n\leq N^2 - \cHPV  N^{3-2/p} +1$, 
	where $\cHPV \in (0,1)$ is the universal constant from \cite[Lemma 2.5]{HPV2020},
	then
	\begin{equation*}
		a_n\big(\Sc_p^N \hookrightarrow \Sc_{p^*}^N\big) 
		\lesssim_{p} 
		N^{-1/2-1/p} \sqrt{N^2-n+1} \sqrt{\log N}
	\end{equation*}	
	Consequently, if $1\leq p \leq 2 \leq q \leq \infty$ and $(1-\cHPV )N^2 \leq n\leq N^2 - \cHPV  N^{\critmax_{p,q}} +1$, then
	\begin{equation*}
		a_n\big(\Sc_p^N \hookrightarrow \Sc_{q}^N\big) 
		\lesssim_{p,q} 
		N^{\critmax_{p,q}/2-2} \sqrt{N^2-n+1} \sqrt{\log N}
			\end{equation*}
\end{proposition}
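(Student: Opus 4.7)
The plan is to reduce the general statement to the self-dual case $q=p^*$, and then to establish the self-dual case by a probabilistic construction in the spirit of Gordon, K\"onig, and Sch\"utt~\cite{GKS1987}.

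\textbf{Reduction.} If $q\ge p^*$, so that $\critmax_{p,q}=3-2/p$, Lemma~\ref{lem:norm} gives $\|\Sc_{p^*}^N\hookrightarrow \Sc_q^N\|=1$, whence factorization through $\Sc_{p^*}^N$ yields
\[
a_n\bigl(\Sc_p^N\hookrightarrow \Sc_q^N\bigr)\le a_n\bigl(\Sc_p^N\hookrightarrow \Sc_{p^*}^N\bigr),
\]
and the self-dual bound with exponent $-1/2-1/p=\critmax_{p,q}/2-2$ gives the desired estimate. If $q\le p^*$, so $\critmax_{p,q}=1+2/q$, the duality identity $a_n(\Sc_p^N\hookrightarrow \Sc_q^N)=a_n(\Sc_{q^*}^N\hookrightarrow \Sc_{p^*}^N)$ combined with factorization through $\Sc_q^N$ (using $\|\Sc_q^N\hookrightarrow \Sc_{p^*}^N\|=1$) reduces the bound to the self-dual case $(q^*,q)$, whose exponent $-1/2-1/q^*=1/q-3/2=\critmax_{p,q}/2-2$ is again what is claimed.

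\textbf{The self-dual case $q=p^*$.} For $1\le p\le 2$, I would adapt Step~1 of the proof of Proposition~\ref{prop:approximation-square-upper-bd-large-n}: look for an orthogonal projection $P$ on $\Sc_2^N$ with $\rank(\id-P)<n$ and range $L\subset\Sc_2^N$ of dimension $N^2-n+1$. Using $P=P^2=P^*$, the factorization $\Sc_p^N\to\Sc_2^N\to\Sc_{p^*}^N$, and the duality identity $\|P\colon\Sc_p^N\to\Sc_2^N\|=\|P\colon\Sc_2^N\to\Sc_{p^*}^N\|$, we get
\[
a_n\bigl(\Sc_p^N\hookrightarrow \Sc_{p^*}^N\bigr)\le \|P\colon\Sc_2^N\to\Sc_{p^*}^N\|^2=\|\id |_L\colon L\subset\Sc_2^N\to\Sc_{p^*}^N\|^2,
\]
so it suffices to produce an $L$ with
\[
\|\id |_L\colon L\subset\Sc_2^N\to\Sc_{p^*}^N\|\lesssim_p N^{-1/4-1/(2p)}(N^2-n+1)^{1/4}(\log N)^{1/4}.
\]
Such an $L$ is built probabilistically by sampling a uniformly random subspace of $\Sc_2^N$ of the prescribed dimension (equivalently, one spanned by independent Gaussian matrices) and controlling $\|\id|_L\|$ via Chevet's inequality and Gaussian chaining, combined with an $\varepsilon$-net argument on the unit sphere of $L$ and volume estimates for $B_{p^*}^N$; with positive probability the random subspace meets the required bound.

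The main obstacle is this probabilistic norm estimate. In the large-$n$ regime of Proposition~\ref{prop:approximation-square-upper-bd-large-n}, the dimension $\dim L\le \cHPV N^{3-2/p}$ lies within the sharp Dvoretzky range provided by Lemma~\ref{lem:Schatten-largest-ell-2-iso}, which furnishes a deterministic subspace $2$-isomorphic to $\ell_2^{\dim L}$ and a clean estimate free of any logarithm. In the intermediate regime treated here, $\dim L=N^2-n+1$ can be of order $N^2$, well beyond that threshold, and one is forced into Gaussian chaining in a supercritical dimension range; the $\sqrt{\log N}$ factor is the standard loss from the $\varepsilon$-net/union-bound step, and removing it is the content of Conjecture~\ref{conj:log}.
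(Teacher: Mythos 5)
Your reduction to the self-dual case is correct and in fact a little cleaner than the paper's (which factorizes through $\Sc_{q^*}^N$ picking up the norm $N^{1/q^*-1/p}$; you route through $\Sc_{p^*}^N$ or $\Sc_q^N$ with norm $1$). However, your Step~1 fails, and not merely because the chaining is hard: the target you set is impossible. For the self-adjoint factorization $\|P\colon \Sc_p^N\to\Sc_{p^*}^N\|\le \|P\colon\Sc_2^N\to\Sc_{p^*}^N\|^2=\|\id|_L\|^2$ to yield the claimed bound, you need a subspace $L\subset\Sc_2^N$ of dimension $N^2-n+1$ with
\[
\|\id|_L\colon L\subset\Sc_2^N\to\Sc_{p^*}^N\|\lesssim_p N^{-1/4-1/(2p)}\,(N^2-n+1)^{1/4}\,(\log N)^{1/4}.
\]
But $\|\id|_L\|\ge c_n\bigl(\Sc_2^N\hookrightarrow\Sc_{p^*}^N\bigr)$ for every such $L$, and by Carl--Defant this Gelfand number is $\asymp N^{-1}\sqrt{N^2-n+1}$ throughout the intermediate range. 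Comparing, $N^{-1}\sqrt{N^2-n+1}$ exceeds the target precisely when $N^2-n+1\gtrsim N^{3-2/p}\log N$, which covers all but a thin $\log$-sliver at the top of the intermediate range. Squaring the best possible $\|\id|_L\|$ gives $N^{-2}(N^2-n+1)$, i.e.\ the square of the Gelfand number; at $n\asymp (1-\cHPV)N^2$ this is $\asymp 1$, whereas the claimed bound is $N^{1/2-1/p}\sqrt{\log N}\to 0$ for $p<2$. So no choice of $L$, however clever the chaining, can make this approach work; the failure is structural, not technical.

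The paper does not attempt to reprove this: Step~1 is a direct citation of \cite[Proposition~3.7]{GKS1987} (the range change being harmless since the argument, via \cite[Theorem~3.4 and Proposition~3.6]{GKS1987}, only needs $n\asymp N^2$). The Gordon--K\"onig--Sch\"utt bound is obtained by a genuinely two-sided probabilistic construction of a low-rank approximant acting between $\Sc_p^N$ and $\Sc_{p^*}^N$ simultaneously, not by controlling its factors through $\Sc_2^N$ separately and squaring. The squaring device used in Proposition~\ref{prop:approximation-square-upper-bd-large-n} works only because, in the large-$n$ regime, $\dim L\lesssim N^{1+2/p^*}$ sits inside the Dvoretzky range of $\Sc_{p^*}^N$, so that $c_n(\Sc_2^N\to\Sc_{p^*}^N)\asymp N^{1/p^*-1/2}$ and its square is exactly the target $N^{1-2/p}$; outside that range the two quantities decouple and the trick loses.
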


However, in the case $p =1$ and $q=p^* = \infty$  Gordon, K\"onig, and Sch\"utt  \cite[Proposition~3.8]{GKS1987} have an exact estimate.
By factorization, it extends to $p=1$ and $2\leq q\leq\infty$ (or $1\leq p\leq 2$ and $q=\infty$) and yields the following asymptotically sharp upper bounds (the corresponding lower bounds are the content of Proposition \ref{prop:approximation-square-lower-bd}).

\begin{proposition}
\label{prop:approximation-square-upper-bd-special}
Let $1\leq p\leq 2\leq q \leq \infty$ and assume that $n,N\in \N$ with $1\leq n \leq N^2$.
Then
\begin{align*}
a_n\big(\Sc_1^N \hookrightarrow \Sc_q^N\big)
&\lesssim_q
\begin{cases}
\min\Bigl\{ 1, \frac{N^{1/2+1/q}}{n^{1/2}} \Bigr\}
          &:\  1 \leq n \leq (1-\cHPV) N^2,\\
N^{-3/2+1/q} \sqrt{N^2-n+1}
          &:\ (1-\cHPV) N^2 \leq n\leq N^2 -  \cHPV N+1,\\
N^{1/q-1} 
          &:\ N^2 - \cHPV N +1\leq n \leq N^2,
\end{cases}
\\
a_n\big(\Sc_p^N \hookrightarrow \Sc_\infty^N\big)
&\lesssim_p
\begin{cases}
\min\Bigl\{ 1, \frac{N^{3/2-1/p}}{n^{1/2}} \Bigr\}
          &:\  1 \leq n \leq (1-\cHPV) N^2,\\
N^{-1/2-1/p} \sqrt{N^2-n+1}
          &:\ (1-\cHPV) N^2 \leq n\leq N^2 -  \cHPV N+1,\\
N^{-1/p} 
          &:\ N^2 - \cHPV N +1\leq n \leq N^2.
\end{cases}
\end{align*}
Here $\cHPV \in (0,1)$ is the universal constant from \cite[Lemma 2.5]{HPV2020}.
\end{proposition}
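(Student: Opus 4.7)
The plan is to deduce both estimates from the precise four-regime asymptotic for $a_n\bigl(\Sc_1^N\hookrightarrow\Sc_\infty^N\bigr)$ due to Gordon, K\"onig, and Sch\"utt \cite[Proposition~3.8]{GKS1987} (recalled in the introduction), combined with the multiplicativity property $(S_3)$ of $s$-numbers and the exact operator-norm identity $\|\Sc_r^N\hookrightarrow\Sc_s^N\|=\max\{1,N^{1/s-1/r}\}$ of Lemma~\ref{lem:norm}. Crucially, the GKS asymptotic carries only absolute constants, so the resulting estimates will have constants depending only on $p$ or $q$.

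For the first estimate, I would factor the identity as $\Sc_1^N\hookrightarrow\Sc_\infty^N\hookrightarrow\Sc_q^N$ and use $(S_3)$ together with $\|\Sc_\infty^N\hookrightarrow\Sc_q^N\|=N^{1/q}$ to obtain
\[
a_n\bigl(\Sc_1^N\hookrightarrow\Sc_q^N\bigr)\leq N^{1/q}\,a_n\bigl(\Sc_1^N\hookrightarrow\Sc_\infty^N\bigr),
\]
and then combine this with the trivial bound $a_n\bigl(\Sc_1^N\hookrightarrow\Sc_q^N\bigr)\leq 1$. Substituting the four GKS regimes and rewriting the result in the three regimes of the proposition is then a direct verification: for $1\leq n\leq(1-\cHPV)N^2$ the product $N^{1/q}\,a_n\bigl(\Sc_1^N\hookrightarrow\Sc_\infty^N\bigr)$ equals either $N^{1/q}$ (when $n\lesssim N$) or a multiple of $N^{1/2+1/q}n^{-1/2}$ (when $n\gtrsim N$), and taking the minimum with $1$ collapses both cases into $\min\{1,N^{1/2+1/q}n^{-1/2}\}$; for $(1-\cHPV)N^2\leq n\leq N^2-\cHPV N+1$ one gets $N^{-3/2+1/q}\sqrt{N^2-n+1}$; and for $N^2-\cHPV N+1\leq n\leq N^2$ one gets $N^{1/q-1}$.

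For the second estimate I would invoke the self-duality $a_n(T)=a_n(T^*)$ of approximation numbers, together with the identification $\bigl(\Sc_p^N\hookrightarrow\Sc_\infty^N\bigr)^{*}=\Sc_1^N\hookrightarrow\Sc_{p^*}^N$, to reduce to the first estimate applied with $q=p^*\in[2,\infty]$; the identities $1/2+1/p^*=3/2-1/p$, $-3/2+1/p^*=-1/2-1/p$, and $1/p^*-1=-1/p$ convert the bounds into exactly the claimed form. As an alternative, one can instead factor $\Sc_p^N\hookrightarrow\Sc_1^N\hookrightarrow\Sc_\infty^N$ and use $\|\Sc_p^N\hookrightarrow\Sc_1^N\|=N^{1-1/p}$.

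The only real subtlety is that the GKS breakpoints ($N$, $N^2/2$, $N^2-N$) do not literally match the proposition's breakpoints ($(1-\cHPV)N^2$ and $N^2-\cHPV N+1$); however, the two GKS expressions $\sqrt{N/n}$ and $N^{-3/2}\sqrt{N^2-n}$ agree up to absolute constants at the transition $n\asymp N^2$, and similarly for the other transitions, so any mismatch is absorbed into the $q$- (respectively $p$-)dependent constant in $\lesssim_q$ (respectively $\lesssim_p$). No deeper obstacle appears.
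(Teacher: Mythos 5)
Your proof is correct and takes essentially the same route as the paper: both rest on the Gordon--K\"onig--Sch\"utt four-regime asymptotic for $a_n\bigl(\Sc_1^N\hookrightarrow\Sc_\infty^N\bigr)$, the factorization through $\Sc_\infty^N$ with $\|\Sc_\infty^N\hookrightarrow\Sc_q^N\|=N^{1/q}$, the trivial bound $a_n\leq 1$, and the self-duality $a_n(T)=a_n(T^*)$. The only cosmetic difference is the order: the paper first reduces the second estimate to the first by duality and then factors, whereas you prove the first estimate by factorization and then dualize; both handle the mismatch of GKS breakpoints versus the stated ranges by absorbing an absolute constant, exactly as you note.
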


\begin{proof}[Proof of Proposition~\ref{prop:approximation-square-upper-bd-intermediate-n}]
	\emph{Step 1.}
	The estimate of $a_n\big(\Sc_p^N \hookrightarrow \Sc_{p^*}^N\big)$ is the content of \cite[Proposition~3.7]{GKS1987};
	 the fact that we consider the range $(1-\cHPV )N^2 \leq n\leq N^2 - \cHPV  N^{3-2/p} +1$ instead of $N^2/2 \leq n\leq N^2 - b_p N^{3-2/p}\log N$ does not change anything, since the assertion follows directly from \cite[Theorem 3.4 and Proposition~3.6]{GKS1987} and it is only crucial that $n$ is of order $N^2$.
	
	\emph{Step 2.} 
	The second part follows from factorization. By duality, we may and do assume that $1/p+1/q\leq 1$, i.e., $1/q^* \geq 1/p$ and $\critmax_{p,q}=3-2/p \geq 3-2/q^* = 1+2/q$.
	Hence
		\[
	a_n\big(\Sc_p^N \hookrightarrow \Sc_q^N\big) 
	\leq \|\Sc_{p}^N \hookrightarrow \Sc_{q^*}^N \| a_n\bigl(\Sc_{q^*}^N \hookrightarrow \Sc_{q}^N  \bigr) \\
	=  N^{1/q^*  - 1/p} a_n\big(\Sc_{q^*}^N \hookrightarrow \Sc_q^N\big).
	\]
	If $(1-\cHPV )N^2 \leq n\leq N^2 - \cHPV  N^{\critmax_{p,q}} +1$, then also $(1-\cHPV )N^2 \leq n\leq N^2 - \cHPV  N^{3-2/q^*} +1$, and the assertion follows from the result of Step 1. 
\end{proof}

\begin{proof}[Proof of Proposition~\ref{prop:approximation-square-upper-bd-special}]
By duality it suffices to prove the first estimate.
By  \cite[Proposition~3.8]{GKS1987},
\begin{align*}
a_n\big(\Sc_1^N \hookrightarrow \Sc_\infty^N\big) 
&\lesssim_q
\begin{cases}
\min\Bigl\{ 1, \frac{N^{1/2}}{n^{1/2}} \Bigr\}
          &:\  1 \leq n \leq (1-\cHPV) N^2,\\
N^{-3/2} \sqrt{N^2-n+1}
          &:\ (1-\cHPV) N^2 \leq n\leq N^2 -  \cHPV N+1,\\
N^{-1} 
          &:\ N^2 - \cHPV N +1\leq n \leq N^2
\end{cases}
\end{align*}
(we may and do choose  slightly different ranges than in \cite{GKS1987} just for the sake of consistency with other results).
Thus, the assertion follows from the factorization
\[
a_n\big(\Sc_1^N \hookrightarrow \Sc_q^N\big) 
 \leq a_n\big(\Sc_{1}^N \hookrightarrow \Sc_\infty^N\big) \| \Sc_\infty^N \hookrightarrow \Sc_{q}^N  \|  \\
=  a_n\big(\Sc_{1}^N \hookrightarrow \Sc_\infty^N\big) N^{1/q}
\]
and the trivial upper bound $a_n\big(\Sc_1^N \hookrightarrow \Sc_q^N\big)\leq \| \Sc_1^N \hookrightarrow \Sc_{q}^N  \|=1 $.
\end{proof}

\section{Approximation numbers in the case $1\leq p\leq q\leq 2$ or $2\leq p\leq q  \leq \infty$} \label{sec:approximation-two-triangles}

The cases $1\leq p\leq q\leq 2$ and $2\leq p\leq q  \leq \infty$ are dual to each other:
if $2\leq p\leq q\leq \infty$, then $1\leq q^* \leq p^* \leq 2$ and $a_n\big(\Sc_p^N\hookrightarrow \Sc_q^N\big)  = a_n\big(\Sc_{q^*}^N\hookrightarrow \Sc_{p^*}^N\big)$.

Below we consider $2\leq p\leq q  \leq \infty$. The proofs of the estimates for the approximation numbers follow the reasoning for Gelfand numbers.

\begin{proposition}
\label{prop:approximation-triangle-1-upper-bd}
Let $2\leq p\leq q \leq \infty$ and assume that $n,N\in \N$ with $1\leq n \leq N^2$.
Then
\begin{align*}
a_n\big(\Sc_p^N \hookrightarrow \Sc_q^N\big)
&\lesssim_{q}
\begin{cases}
1       
           &:\  1 \leq n \leq N^2 - c(q) N^{1+2/p} +1,\\
N^{-1/2-1/p} \sqrt{N^2-n+1}
          &:\ N^2 - c(q) N^{1+2/p} +1\leq n\leq N^2 -  \cHPV N^{1+2/q}+1,\\
N^{1/q-1/p} 
          &:\ N^2 - \cHPV N^{1+2/q} +1\leq n \leq N^2.
\end{cases}
\end{align*}
Here $c(q)\in(0,\infty)$ is a constant that depends only on $q$ and $\cHPV\in(0,1)$ is the constant from \cite[Lemma 2.5]{HPV2020}.
\end{proposition}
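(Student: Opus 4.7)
My plan is to reduce everything to a factorization of $\id\colon \Sc_p^N\to\Sc_q^N$ through the Hilbert--Schmidt class $\Sc_2^N$ and then invoke the Carl--Defant asymptotics for $a_n(\Sc_2^N\hookrightarrow \Sc_q^N)$ recalled in the introduction. Since $2\leq p\leq q\leq \infty$, Lemma~\ref{lem:norm} gives $\|\Sc_p^N\hookrightarrow \Sc_q^N\|=1$, so the trivial bound $a_n(\Sc_p^N\hookrightarrow \Sc_q^N)\leq 1$ is available throughout; this alone handles the first regime.

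For the two remaining regimes, I apply the multiplicativity property $(S_3)$ of $s$-numbers to the factorization $\id_{\Sc_p\to \Sc_q}=\id_{\Sc_2\to \Sc_q}\circ\id_{\Sc_p\to \Sc_2}$, together with $\|\Sc_p^N\hookrightarrow \Sc_2^N\|=N^{1/2-1/p}$ and the Carl--Defant estimate
\[
a_n\big(\Sc_2^N\hookrightarrow \Sc_q^N\big) \asymp_q \max\Big\{N^{1/q-1/2},\,\sqrt{(N^2-n+1)/N^2}\Big\},
\]
valid for $1\leq n\leq N^2$ and $2\leq q\leq \infty$. This immediately gives
\[
a_n\big(\Sc_p^N\hookrightarrow \Sc_q^N\big) \leq N^{1/2-1/p}\, a_n\big(\Sc_2^N\hookrightarrow \Sc_q^N\big) \lesssim_q \max\Big\{N^{1/q-1/p},\,N^{-1/2-1/p}\sqrt{N^2-n+1}\Big\}.
\]
The two terms in the maximum switch dominance exactly at $N^2-n+1 \asymp N^{1+2/q}$, so for $n\geq N^2-\cHPV N^{1+2/q}+1$ the first term controls and produces the bound of the third regime, whereas for $n\leq N^2-\cHPV N^{1+2/q}+1$ the second term controls and produces the bound $N^{-1/2-1/p}\sqrt{N^2-n+1}$ claimed in the second regime.

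To align the second regime with the trivial bound of the first, I choose $c(q)$ so that $C_q\sqrt{c(q)}\leq 1$, where $C_q$ denotes the implicit constant in the Carl--Defant asymptotic. For $n\leq N^2-c(q)N^{1+2/p}+1$ the factorization estimate $C_q\, N^{-1/2-1/p}\sqrt{N^2-n+1}$ is at least $1$, so it is worse than the trivial bound and the latter is used; for $n\geq N^2-c(q)N^{1+2/p}+1$ the factorization estimate is at most a $q$-dependent constant, and the regimes match consistently. The only subtlety is this bookkeeping of the constants $c(q)$ and $\cHPV$; no new geometric input is required, and the structure of the argument is parallel to the reasoning used for Gelfand numbers, with the sharper Carl--Defant asymptotic playing the role of the intermediate ingredient.
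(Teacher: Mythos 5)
Your proposal is correct and essentially reproduces the paper's own argument: both factorize $\id\colon \Sc_p^N\to\Sc_q^N$ through $\Sc_2^N$, invoke the Carl--Defant asymptotic for $a_n(\Sc_2^N\hookrightarrow\Sc_q^N)$, and then intersect with the trivial bound $a_n\leq 1$ to produce the three regimes. The only cosmetic difference is that you spell out the constant bookkeeping for $c(q)$ a bit more explicitly, which the paper leaves implicit.
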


We remark that if $2\leq p\leq q\leq\infty$, then we have the following lower bounds due to Hinrichs--Michels \cite[Example 4.14]{HM2005},
\[
a_n(\Sc_p^N \hookrightarrow \Sc_q^N)
 \geq c_n(\Sc_p^N \hookrightarrow \Sc_q^N)
\gtrsim_{p,q}
\begin{cases}
\sqrt{\frac{N^2-n+1}{N^2}}^{\frac{1/p-1/q}{1/2-1/q}}
          &:\ 1 \leq n\leq N^2 -  N^{1+2/q}+1,\\
N^{1/q-1/p} 
          &:\  N^2 - N^{1+2/q} +1\leq n \leq N^2.
\end{cases}
\]
They match the above upper bounds for $1\leq n\leq c(p,q) N^2$ and $N^2-\cHPV N^{1+2/q} + 1\leq n\leq N$.

\begin{proof}[Proof of Proposition~\ref{prop:approximation-triangle-1-upper-bd}]
As in \cite{HPV2020} in the case of Gelfand numbers, by factorization and the result of Carl--Defant~\cite{CD1997} we can write
\begin{align*} 
	a_n(\Sc_p^N \hookrightarrow \Sc_q^N) 
	& \leq \|\Sc_p^N \hookrightarrow \Sc_2^N\| a_n(\Sc_2^N \hookrightarrow \Sc_q^N) \\
	& \asymp_{q} N^{1/2 - 1/p } \max \Bigl\{ N^{1/q - 1/2}, N^{-1}\sqrt{N^2-n+1} \Bigr\}\\
	& =
	\begin{cases}
		N^{-1/2 - 1/p} (N^2-n+1)^{1/2}  &:\ 1\leq n\leq N^2 - N^{1+2/q} + 1,\\
		N^{1/q - 1/p} &:\ N^2 - N^{1+2/q} + 1 \leq n\leq N^2
	\end{cases}
\end{align*}
(and of course, by paying the price of a constant, we can take $N^2 - \cHPV N^{1+2/q} + 1$ instead of  $N^2 - N^{1+2/q} + 1$ in the definition of the ranges, just  for consistency with other statements).
We also have the trivial bound $a_n(\Sc_p^N \hookrightarrow \Sc_q^N) \leq \|\Sc_p^N \hookrightarrow \Sc_q^N\| = 1$, which is better than the above estimate whenever $n\leq N^2 - c(q) N^{1+2/p} + 1$.
\end{proof}

\section{Approximation and Kolmogorov numbers in the case $0<p \leq 1$, $p\leq q$}
\label{sec:approximation-p-quasi}

\begin{proposition}
	Let $0 < p \leq 1$, $2\leq q\leq \infty$ and assume that $n,N\in \N$ with $1\leq n \leq N^2$.
	Then
	\begin{equation*}
		a_n \big(\Sc_p^N \hookrightarrow \Sc_q^N\big) = d_n\big(\Sc_p^N \hookrightarrow \Sc_q^N\big) 
		\asymp_{q}
\begin{cases}
\min\Bigl\{ 1, \frac{N^{1/2+1/q}}{n^{1/2}} \Bigr\}
          &:\  1 \leq n \leq (1-\cHPV) N^2,\\
N^{-3/2+1/q} \sqrt{N^2-n+1}
          & :\ (1-\cHPV) N^2 \leq n\leq N^2 -  \cHPV N+1,\\
N^{1/q-1} 
          &:\ N^2 - \cHPV N +1\leq n \leq N^2.
\end{cases}
\end{equation*}
\end{proposition}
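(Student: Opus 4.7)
The plan is as follows. First, since the claim concerns the range $0 < p \leq 1 \leq q$, I would invoke Lemma~\ref{lem:approximation-Kolmogorov-p-quasi} to collapse the $p$-dependence, obtaining
\[
a_n\big(\Sc_p^N \hookrightarrow \Sc_q^N\big) = a_n\big(\Sc_1^N \hookrightarrow \Sc_q^N\big) \qquad \text{and} \qquad d_n\big(\Sc_p^N \hookrightarrow \Sc_q^N\big) = d_n\big(\Sc_1^N \hookrightarrow \Sc_q^N\big).
\]
Thus it suffices to prove the stated asymptotic for the Banach-space embedding $\Sc_1^N \hookrightarrow \Sc_q^N$ with $2 \leq q \leq \infty$. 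Since $d_n \leq a_n$ always holds, the full statement (in the $\asymp_q$ sense) reduces to producing matching upper and lower bounds on these two sequences.

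For the upper bound, I would appeal directly to Proposition~\ref{prop:approximation-square-upper-bd-special} with $p = 1$, which yields the three-regime upper bound on $a_n(\Sc_1^N \hookrightarrow \Sc_q^N)$ of exactly the form claimed. The same bound transfers to $d_n$ via $d_n \leq a_n$.

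For the lower bound, it suffices to lower-bound $d_n$ (which in turn lower-bounds $a_n$) using the duality identity
\[
d_n\big(\Sc_1^N \hookrightarrow \Sc_q^N\big) = c_n\big(\Sc_{q^*}^N \hookrightarrow \Sc_\infty^N\big),
\]
valid because both spaces involved are Banach. Since $q^* \in [1, 2]$ and the codomain is $\Sc_\infty^N$, I would split into three regimes matching the three cases in the statement. For small $n \in [1,(1-\cHPV)N^2]$ I would apply Proposition~\ref{prop:Gelfand-cleaner-small-regime} with parameters $(q^*, \infty)$, yielding $c_n \gtrsim \min\{1, N^{3/2 - 1/q^*}/n^{1/2}\} = \min\{1, N^{1/2+1/q}/n^{1/2}\}$. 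For intermediate $n \in [(1-\cHPV)N^2,\ N^2 - \cHPV N + 1]$ (noting that $N^{1+2/\infty} = N$) I would invoke Proposition~\ref{prop:Gelfand-missing-intermediate-regime} with parameters $(q^*, \infty)$, giving $c_n \gtrsim N^{-1/2-1/q^*}\sqrt{N^2-n+1} = N^{-3/2+1/q}\sqrt{N^2-n+1}$. For large $n$ I would use the trivial monotonicity bound $c_n \geq c_{N^2} = 1/\|\Sc_\infty^N \hookrightarrow \Sc_{q^*}^N\| = N^{-1/q^*} = N^{1/q-1}$.

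No step presents a real obstacle. The only subtlety worth noting is the reduction to $p = 1$ at the outset, which is essential precisely because duality is unavailable when the domain is a quasi-Banach space; once one is in the Banach-space setting, everything proceeds by combining the Gelfand-number estimates from Section~\ref{sec:Gelfand} with the approximation-number estimate from Proposition~\ref{prop:approximation-square-upper-bd-special} in a routine three-regime match-up.
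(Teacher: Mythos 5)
Your proof is correct and follows essentially the same route as the paper: the paper's own argument is the one-liner "reduce to $p=1$ via Lemma~\ref{lem:approximation-Kolmogorov-p-quasi}, then invoke the results of Sections 3 and 5 and [HPV2020]," and you have simply spelled out exactly which propositions do the work. The upper bound via Proposition~\ref{prop:approximation-square-upper-bd-special} with $p=1$, the lower bound via $d_n(\Sc_1^N \hookrightarrow \Sc_q^N) = c_n(\Sc_{q^*}^N \hookrightarrow \Sc_\infty^N)$ combined with Propositions~\ref{prop:Gelfand-missing-intermediate-regime} and~\ref{prop:Gelfand-cleaner-small-regime} and the trivial $c_{N^2}$ bound, and the observation that lower bounds on $d_n$ transfer to $a_n$ while upper bounds on $a_n$ transfer to $d_n$ --- all of this is the intended decomposition, and the exponent arithmetic ($3/2-1/q^* = 1/2+1/q$, $-1/2-1/q^* = -3/2+1/q$, $N^{1+2/\infty}=N$) checks out.
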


\begin{proof}
By Lemma~\ref{lem:approximation-Kolmogorov-p-quasi} the $s$-numbers in question are equal to those of the embedding $\Sc_1^N \hookrightarrow \Sc_q^N$ (in particular the constant in the estimate does not depend on $p$).
Hence, the assertion follows  from  the results of Sections~\ref{sec:Gelfand} and \ref{sec:approximation-square} and  \cite{HPV2020}.
\end{proof}

\begin{proposition}
	Let $0 < p \leq 1\leq q\leq 2$ and assume that $n,N\in \N$ with $1\leq n \leq N^2$.
	Then
	\begin{equation*}
		d_n\big(\Sc_p^N \hookrightarrow \Sc_q^N\big)\leq 
		a_n (\Sc_p^N \hookrightarrow \Sc_q^N) \lesssim_q
		\begin{cases}
			1       
			&:\,  1 \leq n \leq N^2 - c(1) N^{3-2/q} +1,\\
			N^{-3/2 + 1/q} \sqrt{N^2-n+1}
			&:\, N^2 - c(1) N^{3-2/q} +1\leq n\leq N^2 -  \cHPV N+1,\\
			N^{1/q-1} 
			&:\, N^2 - \cHPV N +1\leq n \leq N^2,
		\end{cases}
	\end{equation*}
	and
	\begin{equation*}
		a_n\big(\Sc_p^N \hookrightarrow \Sc_q^N\big) \geq
		d_n (\Sc_p^N \hookrightarrow \Sc_q^N)
		\gtrsim_q
		\begin{cases}
			1       
			&:\,  1 \leq n \leq (1-\cHPV)N^2,\\
			\Bigl(\frac{N^2-n+1}{N^2}\Bigr)^{1-1/q}
			&:\, (1-\cHPV)N^2 \leq n\leq N^2 - \cHPV N+1,\\
			N^{1/q-1} 
			&:\, N^2 - \cHPV N +1\leq n \leq N^2.
		\end{cases}
	\end{equation*}
\end{proposition}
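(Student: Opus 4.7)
The proposition has two parts: an upper bound on $a_n$ (which subsumes the bound $d_n \leq a_n$) and a lower bound on $d_n$ (which subsumes $a_n \geq d_n$). Both parts begin by invoking Lemma~\ref{lem:approximation-Kolmogorov-p-quasi} to reduce to the case $p = 1$:
\[
a_n\bigl(\Sc_p^N \hookrightarrow \Sc_q^N\bigr) = a_n\bigl(\Sc_1^N \hookrightarrow \Sc_q^N\bigr), \qquad
d_n\bigl(\Sc_p^N \hookrightarrow \Sc_q^N\bigr) = d_n\bigl(\Sc_1^N \hookrightarrow \Sc_q^N\bigr).
\]
Since both spaces are now Banach, the standard duality for $s$-numbers between finite-dimensional Banach spaces, together with $(\Sc_1^N)^* = \Sc_\infty^N$ and $(\Sc_q^N)^* = \Sc_{q^*}^N$, yields
\[
a_n\bigl(\Sc_1^N \hookrightarrow \Sc_q^N\bigr) = a_n\bigl(\Sc_{q^*}^N \hookrightarrow \Sc_\infty^N\bigr), \qquad d_n\bigl(\Sc_1^N \hookrightarrow \Sc_q^N\bigr) = c_n\bigl(\Sc_{q^*}^N \hookrightarrow \Sc_\infty^N\bigr).
\]
Because $q \in [1,2]$ implies $q^* \in [2,\infty]$, both right-hand sides fall into the regime $2 \leq p \leq q \leq \infty$ already handled in Section~\ref{sec:approximation-two-triangles} and in \cite{HPV2020}.

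For the upper bound on $a_n$, I would directly apply Proposition~\ref{prop:approximation-triangle-1-upper-bd} with its $p$ replaced by $q^*$ and its $q$ replaced by $\infty$. Using the identities $1+2/q^* = 3-2/q$, $-1/2-1/q^* = -3/2 + 1/q$, and $-1/q^* = 1/q-1$, each case of the resulting bound reproduces the corresponding case of the statement here, with the constant $c(\infty)$ coming from Proposition~\ref{prop:approximation-triangle-1-upper-bd} playing the role of $c(1)$.

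For the lower bound on $c_n\bigl(\Sc_{q^*}^N \hookrightarrow \Sc_\infty^N\bigr)$, I would apply the Hinrichs--Michels interpolation Lemma~\ref{lem:Gelfand-HiMi-interpolation} with $p = 2$, $q = \infty$, and weighted harmonic mean $p_\theta = q^*$ (which corresponds to $\theta = 2/q^* \in [0,1]$). This gives
\[
c_n\bigl(\Sc_{q^*}^N \hookrightarrow \Sc_\infty^N\bigr) \geq c_n\bigl(\Sc_2^N \hookrightarrow \Sc_\infty^N\bigr)^{2/q^*} = c_n\bigl(\Sc_2^N \hookrightarrow \Sc_\infty^N\bigr)^{2-2/q}.
\]
Since the domain is Hilbert, $c_n = a_n$, and the Carl--Defant formula gives $c_n(\Sc_2^N \hookrightarrow \Sc_\infty^N) \asymp \max\bigl\{N^{-1/2}, \sqrt{(N^2-n+1)/N^2}\bigr\}$. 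Raising to the power $2-2/q$ and splitting into the three ranges from the statement yields the three claimed cases: for $1\leq n \leq (1-\cHPV)N^2$ the max is $\asymp 1$, giving $c_n \gtrsim 1$; for $(1-\cHPV)N^2 \leq n \leq N^2 - \cHPV N + 1$ the $\sqrt{\cdot}$ term dominates and the exponent produces exactly $((N^2-n+1)/N^2)^{1-1/q}$; and for $N^2 - \cHPV N + 1 \leq n \leq N^2$ the $N^{-1/2}$ term dominates and the exponent yields $N^{1/q-1}$ (which can also be read off as the trivial monotonicity bound $c_n \geq c_{N^2} = 1/\|\Sc_\infty^N \hookrightarrow \Sc_{q^*}^N\| = N^{1/q-1}$).

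No substantive obstacle is anticipated; the entire argument is a careful translation of exponents through the duality $q\mapsto q^*$ coupled with a single interpolation step. The one subtlety worth flagging is that the sharper interpolation Lemma~\ref{lem:Gelfand-HiMi-interpolation}, rather than the plain factorization $\Sc_2^N \subset \Sc_{q^*}^N \subset \Sc_\infty^N$, is what produces the exponent $1 - 1/q$ in the intermediate range; plain factorization alone would yield only $\sqrt{(N^2-n+1)/N^2}$, which for $q < 2$ is strictly weaker than $((N^2-n+1)/N^2)^{1-1/q}$.
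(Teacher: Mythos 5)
Your argument is correct and follows the paper's own route: reduce to $p=1$ via Lemma~\ref{lem:approximation-Kolmogorov-p-quasi}, dualize to the case $2\le q^*\le\infty$ handled in Section~\ref{sec:approximation-two-triangles}, read off the upper bound from Proposition~\ref{prop:approximation-triangle-1-upper-bd} applied with $(q^*,\infty)$, and obtain the lower bound from the Hinrichs--Michels interpolation lower bound $c_n\bigl(\Sc_{q^*}^N\hookrightarrow\Sc_\infty^N\bigr)\gtrsim c_n\bigl(\Sc_2^N\hookrightarrow\Sc_\infty^N\bigr)^{2/q^*}$ combined with the Carl--Defant asymptotics. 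The paper compresses all of this into one sentence citing Lemma~\ref{lem:approximation-Kolmogorov-p-quasi}, Section~\ref{sec:approximation-two-triangles}, and \cite{HPV2020}; your write-up is simply the explicit unwinding, and the exponent arithmetic ($1+2/q^*=3-2/q$, $-1/2-1/q^*=-3/2+1/q$, $2/q^*=2-2/q$) all checks out.
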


\begin{proof}
	Again by Lemma~\ref{lem:approximation-Kolmogorov-p-quasi} the $s$-numbers in question are equal to those of the embedding $\Sc_1^N \hookrightarrow \Sc_q^N$ (in particular the constant in the estimate does not depend on $p$), but this time in the results of  Section~\ref{sec:approximation-two-triangles} and \cite{HPV2020} we have some gaps in the intermediate range.
\end{proof}

\begin{proposition}
	\label{prop:approximation-Kolmogorov-1}
	Let $0 < p \leq q \leq 1$ and assume that $n,N\in \N$ with $1\leq n \leq N^2$.
	Then
	\begin{equation*}
		a_n (\Sc_p^N \hookrightarrow \Sc_q^N) = d_n(\Sc_p^N \hookrightarrow \Sc_q^N) = 1.
	\end{equation*}
\end{proposition}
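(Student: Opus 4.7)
The plan is to prove both equalities separately, with the nontrivial content being the lower bound.

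\textbf{Upper bound.} Since $p \leq q$, Lemma~\ref{lem:norm} gives $\|\Sc_p^N \hookrightarrow \Sc_q^N\| = 1$. The general bounds $a_n(T) \leq \|T\|$ (axiom $(S_1)$) and $d_n(T) \leq \|T\|$ (take $E = \{0\}$ in the definition of $d_n$) then immediately yield $a_n, d_n \leq 1$.

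\textbf{Lower bound.} Since $a_n(T) \geq d_n(T)$, it is enough to prove $d_n(\Sc_p^N \hookrightarrow \Sc_q^N) \geq 1$. The core idea is to test $d_n$ against rank-$1$ matrices of the form $uv^T$ with $\|u\|_2 = \|v\|_2 = 1$: such matrices satisfy $\|uv^T\|_{\Sc_r} = 1$ for every $r \in (0, \infty]$, so they simultaneously lie on the unit spheres of all Schatten classes and in particular $uv^T \in B_p^N$ regardless of $p$.

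Given a subspace $E \subset \Sc_q^N$ with $\dim E < n \leq N^2$, one picks a nonzero matrix $V \in \R^{N\times N}$ orthogonal to $E$ with respect to the Frobenius inner product $\langle V, B\rangle = \mathrm{tr}(V^T B)$; such $V$ exists because $\dim E < N^2$. Then for all unit vectors $u, v \in \R^N$ and every $B \in E$, trace duality (Hölder for Schatten classes) combined with the monotonicity inequality $\|C\|_{\Sc_1} \leq \|C\|_{\Sc_q}$ (valid because $q \leq 1$ and the $\ell_r$-norm of any nonnegative vector is nonincreasing in $r$) yields
\[
|u^T V v| = |\langle V, uv^T - B\rangle| \leq \|V\|_{\Sc_\infty}\, \|uv^T - B\|_{\Sc_1} \leq \|V\|_{\Sc_\infty}\, \|uv^T - B\|_{\Sc_q}.
\]
Taking the infimum over $B \in E$ and selecting $u_0, v_0$ to be a leading pair of singular vectors of $V$ (so that $|u_0^T V v_0| = \|V\|_{\Sc_\infty} > 0$), one obtains $\inf_{B \in E}\|u_0 v_0^T - B\|_{\Sc_q} \geq 1$. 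Since $u_0 v_0^T \in B_p^N$, this forces $d_n \geq 1$.

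The main conceptual obstacle is finding the right test elements. A naive Riesz-lemma argument inside $\Sc_q^N$ would only produce unit $\Sc_q$-vectors at distance roughly $1$ from $E$, but such vectors need not belong to the strictly smaller ball $B_p^N$. Rank-$1$ matrices sidestep this difficulty precisely because their Schatten norms are independent of the index, placing them on the boundary of every $B_r^N$ at once; the remainder of the argument is then a short application of trace duality exploiting $q \leq 1$ through $\|\cdot\|_{\Sc_1} \leq \|\cdot\|_{\Sc_q}$.
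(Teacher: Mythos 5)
Your proof is correct but takes a genuinely different route. The paper's proof is a short chain of factorizations: it starts from $d_n(\Sc_1^N\hookrightarrow\Sc_1^N)=1$, which is Lemma~\ref{lem:approximation-Kolmogorov-id-1} (proved via Riesz's lemma for quasi-normed spaces), upgrades this to $d_n(\Sc_p^N\hookrightarrow\Sc_1^N)=1$ via the stationarity result Lemma~\ref{lem:approximation-Kolmogorov-p-quasi}, then passes to $d_n(\Sc_p^N\hookrightarrow\Sc_q^N)$ by factoring through $\Sc_q^N$ (using $\|\Sc_q^N\hookrightarrow\Sc_1^N\|=1$ for $q\le 1$), and closes with $a_n(\Sc_p^N\hookrightarrow\Sc_q^N)\le a_n(\Sc_p^N\hookrightarrow\Sc_p^N)=1$. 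You bypass both supporting lemmas by testing the Kolmogorov number directly against rank-one matrices $u_0v_0^T$ (which lie on every Schatten unit sphere simultaneously), combining trace duality $|\langle V, C\rangle|\le\|V\|_{\Sc_\infty}\|C\|_{\Sc_1}$ with the norm monotonicity $\|\cdot\|_{\Sc_1}\le\|\cdot\|_{\Sc_q}$ valid for $q\le 1$. The trade-off: the paper's proof is shorter given its infrastructure, while yours is self-contained and more explicit about why the quasi-norm regime $q\le 1$ is the right hypothesis. One further remark worth noting: your lower-bound argument never uses $p\le q$, so it actually establishes $d_n(\Sc_p^N\hookrightarrow\Sc_q^N)\ge 1$ for all $0<p\le\infty$, $0<q\le 1$, and $1\le n\le N^2$, which already yields the $\gtrsim 1$ content of Conjecture~\ref{conj:Kolmogorov} in the large-$n$ range where its right-hand side equals $1$.
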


\begin{proof}  
	For $0 < p \leq q \leq 1$ and $1\leq n \leq N^2$ we have
	\begin{align*}
	1 = d_n(\Sc_1^N \hookrightarrow \Sc_1^N) = d_n(\Sc_p^N \hookrightarrow \Sc_1^N)
	  &\leq d_n(\Sc_p^N \hookrightarrow \Sc_q^N)\\
	  &\leq a_n(\Sc_p^N \hookrightarrow \Sc_q^N) \leq a_n(\Sc_p^N \hookrightarrow \Sc_p^N) =1,
	\end{align*}
where we used factorization (note that $\| \Sc_q^N \hookrightarrow \Sc_1^N \| = \| \Sc_p^N \hookrightarrow \Sc_q^N \| = 1$ for $0<p\leq q\leq 1$) and Lemma~\ref{lem:approximation-Kolmogorov-id-1}.
	\end{proof}


\subsection*{Acknowledgement}
J. Prochno and M. Strzelecki are supported by the Austrian Science Fund (FWF) Project P32405
``Asymptotic geometric analysis and applications''. J. Prochno is also supported  by Project F5513-N26 of the Austrian Science Fund,  
which  is  a  part  of  the  Special 
Research  Program  ``Quasi-Monte  Carlo  Methods:  Theory  and  Applications''.

\bibliographystyle{plain}
\bibliography{approx_schatten}

\begin{thebibliography}{10}

\bibitem{AS2017}
G.~Aubrun and S.~Szarek.
\newblock {\em Alice and {B}ob meet {B}anach}, volume 223 of {\em Mathematical
  Surveys and Monographs}.
\newblock American Mathematical Society, Providence, RI, 2017.
\newblock The interface of asymptotic geometric analysis and quantum
  information theory.

\bibitem{ASW2010}
G.~Aubrun, S.~Szarek, and E.~Werner.
\newblock Nonadditivity of {R}\'{e}nyi entropy and {D}voretzky's theorem.
\newblock {\em J. Math. Phys.}, 51(2):022102, 7, 2010.

\bibitem{ASW2011}
G.~Aubrun, S.~Szarek, and E.~Werner.
\newblock Hastings's additivity counterexample via {D}voretzky's theorem.
\newblock {\em Comm. Math. Phys.}, 305(1):85--97, 2011.

\bibitem{MR1339817}
J.~Bastero, J.~Bernu\'{e}s, and A.~Pe\~{n}a.
\newblock An extension of {M}ilman's reverse {B}runn-{M}inkowski inequality.
\newblock {\em Geom. Funct. Anal.}, 5(3):572--581, 1995.

\bibitem{BL_book}
J.~Bergh and J.~L\"{o}fstr\"{o}m.
\newblock {\em Interpolation spaces. {A}n introduction}.
\newblock Springer-Verlag, Berlin-New York, 1976.
\newblock Grundlehren der Mathematischen Wissenschaften, No. 223.

\bibitem{CR2009}
E.~J. Cand\`es and B.~Recht.
\newblock Exact matrix completion via convex optimization.
\newblock {\em Found. Comput. Math.}, 9(6):717--772, 2009.

\bibitem{CD1997}
B.~Carl and A.~Defant.
\newblock Asymptotic estimates for approximation quantities of tensor product
  identities.
\newblock {\em J. Approx. Theory}, 88(2):228--256, 1997.

\bibitem{CS1990}
B.~Carl and I.~Stephani.
\newblock {\em Entropy, compactness and the approximation of operators},
  volume~98 of {\em Cambridge Tracts in Mathematics}.
\newblock Cambridge University Press, Cambridge, 1990.

\bibitem{CGLP2012}
D.~Chafa\"{\i}, O.~Gu\'{e}don, G.~Lecu\'{e}, and A.~Pajor.
\newblock {\em Interactions between compressed sensing random matrices and high
  dimensional geometry}, volume~37 of {\em Panoramas et Synth\`eses [Panoramas
  and Syntheses]}.
\newblock Soci\'{e}t\'{e} Math\'{e}matique de France, Paris, 2012.

\bibitem{CDK2015}
J.~A. Ch\'{a}vez-Dom\'{\i}nguez and D.~Kutzarova.
\newblock Stability of low-rank matrix recovery and its connections to {B}anach
  space geometry.
\newblock {\em J. Math. Anal. Appl.}, 427(1):320--335, 2015.

\bibitem{DDEYK2012}
M.~A. Davenport, M.~F. Duarte, Y.~C. Eldar, and G.~Kutyniok.
\newblock Introduction to compressed sensing.
\newblock In {\em Compressed sensing}, pages 1--64. Cambridge Univ. Press,
  Cambridge, 2012.

\bibitem{MR1861746}
A.~Defant, M.~Masty\l~o, and C.~Michels.
\newblock Summing inclusion maps between symmetric sequence spaces, a survey.
\newblock In {\em Recent progress in functional analysis ({V}alencia, 2000)},
  volume 189 of {\em North-Holland Math. Stud.}, pages 43--60. North-Holland,
  Amsterdam, 2001.

\bibitem{DMM2006}
A.~Defant, M.~Masty\l~o, and C.~Michels.
\newblock Summing norms of identities between unitary ideals.
\newblock {\em Math. Z.}, 252(4):863--882, 2006.

\bibitem{FLM1977}
T.~Figiel, J.~Lindenstrauss, and V.~D. Milman.
\newblock The dimension of almost spherical sections of convex bodies.
\newblock {\em Acta Math.}, 139(1-2):53--94, 1977.

\bibitem{FR2013}
S.~Foucart and H.~Rauhut.
\newblock {\em A mathematical introduction to compressive sensing}.
\newblock Applied and Numerical Harmonic Analysis. Birkh\"{a}user/Springer, New
  York, 2013.

\bibitem{GKS1987}
Y.~Gordon, H.~K\"{o}nig, and C.~Sch\"{u}tt.
\newblock Geometric and probabilistic estimates for entropy and approximation
  numbers of operators.
\newblock {\em J. Approx. Theory}, 49(3):219--239, 1987.

\bibitem{GL1974}
Y.~Gordon and D.~R. Lewis.
\newblock Absolutely summing operators and local unconditional structures.
\newblock {\em Acta Math.}, 133:27--48, 1974.

\bibitem{G233}
A.~Grothendieck.
\newblock Sur une notion de produit tensoriel topologique d'espaces vectoriels
  topologiques, et une classe remarquable d'espaces vectoriels li\'{e}e \`a
  cette notion.
\newblock {\em C. R. Acad. Sci. Paris}, 233:1556--1558, 1951.

\bibitem{GP2007}
O.~Gu\'edon and G.~Paouris.
\newblock Concentration of mass on the {S}chatten classes.
\newblock {\em Ann. Inst. H. Poincar\'e Probab. Statist.}, 43(1):87--99, 2007.

\bibitem{MR1932582}
A.~Hinrichs.
\newblock Approximation numbers of identity operators between symmetric
  sequence spaces.
\newblock {\em J. Approx. Theory}, 118(2):305--315, 2002.

\bibitem{HM2005}
A.~Hinrichs and C.~Michels.
\newblock Approximation numbers of inclusions between {S}chatten classes.
\newblock {\em Rend. Circ. Mat. Palermo (2) Suppl.}, (76):395--411, 2005.

\bibitem{MR2223589}
A.~Hinrichs and C.~Michels.
\newblock Gelfand numbers of identity operators between symmetric sequence
  spaces.
\newblock {\em Positivity}, 10(1):111--133, 2006.

\bibitem{HPV2017}
A.~Hinrichs, J.~Prochno, and J.~Vyb\'{\i}ral.
\newblock Entropy numbers of embeddings of {S}chatten classes.
\newblock {\em J. Funct. Anal.}, 273(10):3241--3261, 2017.

\bibitem{HPV2020}
A.~{Hinrichs}, J.~{Prochno}, and J.~{Vyb{\'\i}ral}.
\newblock {Gelfand numbers of embeddings of Schatten classes}.
\newblock {\em arXiv e-prints}, page arXiv:2011.06554, November 2020.

\bibitem{MR2978290}
R.~A. Horn and C.~R. Johnson.
\newblock {\em Matrix analysis}.
\newblock Cambridge University Press, Cambridge, second edition, 2013.

\bibitem{KPT2020_volume_ratio}
Z.~Kabluchko, J.~Prochno, and C.~Th\"{a}le.
\newblock Exact asymptotic volume and volume ratio of {S}chatten unit balls.
\newblock {\em J. Approx. Theory}, 257:105457, 2020.

\bibitem{KPT2020_intersections}
Z.~Kabluchko, J.~Prochno, and C.~Th\"{a}le.
\newblock Intersection of unit balls in classical matrix ensembles.
\newblock {\em Israel J. Math.}, 239(1):129--172, 2020.

\bibitem{KPT2019_sanov}
Z.~Kabluchko, J.~Prochno, and C.~Th\"{a}le.
\newblock Sanov-type large deviations in {S}chatten classes.
\newblock {\em Ann. Inst. Henri Poincar\'{e} Probab. Stat.}, 56(2):928--953,
  2020.

\bibitem{K1986}
H.~K\"{o}nig.
\newblock {\em Eigenvalue distribution of compact operators}, volume~16 of {\em
  Operator Theory: Advances and Applications}.
\newblock Birkh\"{a}user Verlag, Basel, 1986.

\bibitem{KMP1998}
H.~K\"onig, M.~Meyer, and A.~Pajor.
\newblock The isotropy constants of the {S}chatten classes are bounded.
\newblock {\em Math. Ann.}, 312(4):773--783, 1998.

\bibitem{KS2018}
C.~K\"{u}mmerle and J.~Sigl.
\newblock Harmonic mean iteratively reweighted least squares for low-rank
  matrix recovery.
\newblock {\em J. Mach. Learn. Res.}, 19:Paper no. 47, 49, 2018.

\bibitem{KP1970}
S.~Kwapie\'{n} and A.~Pe{\l}czy\'{n}ski.
\newblock The main triangle projection in matrix spaces and its applications.
\newblock {\em Studia Math.}, 34:43--68, 1970.

\bibitem{MR1393437}
G.~G. Lorentz, M.~v.~Golitschek, and Y.~Makovoz.
\newblock {\em Constructive approximation}, volume 304 of {\em Grundlehren der
  Mathematischen Wissenschaften [Fundamental Principles of Mathematical
  Sciences]}.
\newblock Springer-Verlag, Berlin, 1996.
\newblock Advanced problems.

\bibitem{Pietsch2}
A.~Pietsch.
\newblock {\em Operator ideals}, volume~16 of {\em Mathematische Monographien
  [Mathematical Monographs]}.
\newblock VEB Deutscher Verlag der Wissenschaften, Berlin, 1978.

\bibitem{RV2016}
J.~Radke and B.-H. Vritsiou.
\newblock On the thin-shell conjecture for the {S}chatten classes.
\newblock {\em Ann. Inst. H. Poincaré Probab. Statist.}, 56(1):87--119, 2020.

\bibitem{RT2011}
A.~Rohde and A.~B. Tsybakov.
\newblock Estimation of high-dimensional low-rank matrices.
\newblock {\em Ann. Statist.}, 39(2):887--930, 2011.

\bibitem{R1951}
A.~F. Ruston.
\newblock On the {F}redholm theory of integral equations for operators
  belonging to the trace class of a general {B}anach space.
\newblock {\em Proc. London Math. Soc. (2)}, 53:109--124, 1951.

\bibitem{S1946}
R.~Schatten.
\newblock The cross-space of linear transformations.
\newblock {\em Ann. of Math. (2)}, 47:73--84, 1946.

\bibitem{Schatten1960}
R.~Schatten.
\newblock {\em Norm ideals of completely continuous operators}.
\newblock Ergebnisse der Mathematik und ihrer Grenzgebiete. N. F., Heft 27.
  Springer-Verlag, Berlin-G\"{o}ttingen-Heidelberg, 1960.

\bibitem{SvN1946_II}
R.~Schatten and J.~von Neumann.
\newblock The cross-space of linear transformations. {II}.
\newblock {\em Ann. of Math. (2)}, 47:608--630, 1946.

\bibitem{SvN1946_III}
R.~Schatten and J.~von Neumann.
\newblock The cross-space of linear transformations. {III}.
\newblock {\em Ann. of Math. (2)}, 49:557--582, 1948.

\bibitem{TJ1974}
N.~Tomczak-Jaegermann.
\newblock The moduli of smoothness and convexity and the {R}ademacher averages
  of trace classes {$S_{p}\,\,(1\leq p<\infty )$}.
\newblock {\em Studia Math.}, 50:163--182, 1974.

\bibitem{V2013}
A.~A. Vasil'eva.
\newblock Kolmogorov and linear widths of the weighted {B}esov classes with
  singularity at the origin.
\newblock {\em J. Approx. Theory}, 167:1--41, 2013.

\bibitem{V2018}
B.-H. {Vritsiou}.
\newblock {Selberg-type integrals and the variance conjecture for the operator
  norm}.
\newblock {\em arXiv e-prints}, page arXiv:1805.03129, May 2018.

\bibitem{MR2432104}
J.~Vyb\'{\i}ral.
\newblock Widths of embeddings in function spaces.
\newblock {\em J. Complexity}, 24(4):545--570, 2008.

\end{thebibliography}

\bigskip
	
	\bigskip
	
	\medskip
	
	\small

	\noindent \textsc{Joscha Prochno:} Institute of Mathematics and Scientific Computing,
	University of Graz, Heinrichstrasse 36, 8010 Graz, Austria
	
	\noindent 
	{\it E-mail:} \texttt{joscha.prochno@uni-graz.at}

		\medskip
		
		\noindent \textsc{Micha{\l} Strzelecki:} Institute of Mathematics and Scientific Computing,
			University of Graz, Heinrichstrasse 36, 8010 Graz, Austria
		
		\noindent 
		{\it E-mail:} \texttt{michal.strzelecki@uni-graz.at}

\end{document}